\newcommand{\thickhline}{%
	\noalign {\ifnum 0=`}\fi \hrule height 1pt
	\futurelet \reserved@a \@xhline
}
\newcolumntype{"}{@{\hskip\tabcolsep\vrule width 1pt\hskip\tabcolsep}}
\newtheorem{thm}{Theorem}[section]
\newtheorem{cor}[thm]{Corollary}
\newtheorem{defn}[thm]{Definition}
\newtheorem{lem}[thm]{Lemma}
\newtheorem{prop}[thm]{Proposition}
\newtheorem{exa}[thm]{Example}
\newtheorem{rmk}[thm]{Remark}
\newtheorem{ass}[thm]{Assumption}
\newcommand{\diag}{\mathop{\mathrm{diag}}}
\newcommand{\End}{\mathop{\mathrm{End}}}
\numberwithin{equation}{section}
\begin{document}

\title[Slice regular functions over slice-cones]{A representation formula for slice regular functions over slice-cones in several variables}
\author{Xinyuan Dou}
\email[X.~Dou]{douxy@mail.ustc.edu.cn}
\address{Department of Mathematics, University of Science and Technology of China, Hefei 230026, China}
\author{Guangbin Ren}
\email[G.~Ren]{rengb@ustc.edu.cn}
\address{Department of Mathematics, University of Science and Technology of China, Hefei 230026, China}
\author{Irene Sabadini}
\email[Irene Sabadini]{irene.sabadini@polimi.it}
\address{Dipartimento di Matematica, Politecnico di Milano, Via Bonardi, 9, 20133 Milano, Italy}
\date{\today}
\keywords{slice regular functions; representation formula; functions of hypercomplex variable; quaternions; octonions; Clifford algebras; alternative algebras; sedenions}

\subjclass[2020]{Primary: 30G35, 17A30}

\thanks{This work was supported by the NNSF of China (11771412).}

\begin{abstract}
	The aim of this paper is to extend the so called slice analysis to a general case in which the codomain is a real vector space of even dimension, i.e. is of the form $\mathbb{R}^{2n}$. We define a cone $\mathcal{W}_\mathcal{C}^d$ in $[\End(\mathbb{R}^{2n})]^d$ and we extend the slice-topology $\tau_s$ to this cone. Slice regular functions can be defined on open sets in $\left(\tau_s,\mathcal{W}_\mathcal{C}^d\right)$ and a number of results can be proved in this framework, among which a representation formula. This theory can be applied to some real algebras, called left slice complex  structure algebras. These algebras include quaternions, octonions, Clifford algebras and real alternative $*$-algebras but also left-alternative algebras and sedenions, thus providing brand new settings in slice analysis.
\end{abstract}

\maketitle

\section{Introduction}\label{sc-in}

Quaternionic and more in general Clifford analysis are a classical subject which started over a hundred years ago, when complex analysts were looking for function theories generalizing holomorphic functions in one variable to higher dimensional cases. Functions ``hyperholomorphic" in a suitable sense, with values in an algebra extending that one of complex numbers, emerged as possible theories alternative to the one of holomorphic functions in several complex variables.
The list of contributors is quite long and we refer the interested reader to the recent \cite{Scetranslation} and the references therein.

In more recent times, inspired by a work of Cullen \cite{Cullen1965001}, Gentili and Struppa introduced a definition of hyperholomorphicity  \cite{Gentili2007001} over the quaternions $\mathbb{H}$. The corresponding class of functions includes convergent power series in the quaternionic variable $q$
\begin{equation*}
	\sum_{m\in\mathbb{N}} q^m a_m,\qquad a_m\in\mathbb{H}.
\end{equation*}

The original definition of slice regularity in \cite{Gentili2007001} requires that for any imaginary unit $I$ in $\mathbb{H}$, the restriction of $f$ to the complex plane $\mathbb{C}_I$ is holomorphic. In our terminology, this analysis is called weak slice analysis. Weak slice analysis has been extended to Clifford algebras, see \cite{Colombo2009002}, and octonions \cite{Gentili2010001}. This function theory has been widely developed in the past decade especially because of various applications to operator theory (see \cite{Colombo2011001B,Alpay2016001B} and the references therein), geometry (see e.g. \cite{Altavilla,Gentili2012001}), geometric function theory (see e.g. \cite{Ren2017002, Ren2017001, Ren2015001}) and since we have no pretense of completeness, we refer the reader to the references in the aformentioned books and in  \cite{bookentire,bookapprox,Gentili2013001B}.

Another definition of slice regularity, see \cite{Ghiloni2011001}, is given for functions with values in a real alternative $*$-algebra $A$, and defined on a domain $\Omega$ in the quadratic cone of $A$, and it is based on the fact that these functions have a holomorphic stem function. This class of functions, according to the open set where the functions are defined, may be different from the one previously discussed. In this paper these functions are called strong slice regular and strong slice analysis is the corresponding theory. This analysis can be extended to the case of several variables in the case of quaternions, Clifford algebras, octonions and real alternative $*$-algebras, see \cite{Colombo2012002},  \cite{Ghiloni2012001},  \cite{Ren2020001}, \cite{Ghiloni2020001}, respectively.

In the recent paper \cite{Dou2020001} a new topology $\tau_s$, called slice-topology, is defined on $\mathbb{H}$. This topology is finer than the Euclidean topology and it is used to further extend the class of weak slice regular functions which can be defined on open sets in $\tau_s$.
The slice-topology can be extended to the several variables case by using an idea firstly introduced in
\cite{Ren2020001} in the context of octonions. In that paper, the authors consider a cone
\begin{equation*}
	\mathbb{O}_s^d:=\bigcup_{I\in\mathcal{S}_\mathbb{O}}\mathbb{C}_I^d\ \left(\subset\mathbb{O}^d\right),\qquad d\in\mathbb{N}_+,
\end{equation*}
where
\begin{equation}\label{eq-so}
	\mathbb{S}_\mathbb{O}=\{I\in\mathbb{O}:I^2=-1\}
\end{equation}
is the set of imaginary units in octonions and $\mathbb{C}_I^d=(\mathbb{R}+\mathbb{R}I)^d$ is a $d$-dimensional complex space.

  Then, in \cite{Dou2020003}, the slice-topology is extended to a cone $\mathbb{O}_s^d$ in $\mathbb{O}^d$, so one can define weak slice regular functions in several octonionic variables. Following this approach, in this paper we extend the notion of weak slice regularity to a completely new and rather general setting. The main novelties are basically two: the first one is that we consider functions with values in a finite dimensional real vector space of even dimension, namely - up to isomorphisms - $\mathbb R^{2n}$. According to this choice of the codomain, the domain of the functions will be a set  $\mathcal{W}_\mathcal{C}^d$ in $\left[\End(\mathbb{R}^{2n})\right]^d$ that we call weak slice-cone, and this is the second novelty.
  The cone $\mathcal{W}_\mathcal{C}^d$ will be equipped with the slice-topology.

  Thus the weak slice regular functions studied in this work are $\mathbb{R}^{2n}$-valued and defined on open sets in the topological space $(\mathcal{W}_\mathcal{C}^d,\tau_s)$. For these functions, we prove various results among which a representation formula which generalizes the one in \cite{Colombo2009003, Colombo2009001, Dou2020001, Dou2020003}. A key tool for its proof is the Moore-Penrose inverse.

This new and general approach can be applied to a wide set of even-dimensional real algebras which are called left slice complex structure algebras, LSCS algebras for short, which include quaternions, octonions, Clifford algebras, real alternative $*$-algebras, some left alternative algebras, sedenions. Thus, not only we obtain the weak slice analysis covering all the previous known cases, but we also provide completely new settings.

Now, we describe the structure of the paper. In Section \ref{sc-pdb}, we recall some useful notions in linear algebra including the Moore-Penrose inverse and complex structures. In Section \ref{sc-wsc}, we define a weak slice-cone $\mathcal{W}_\mathcal{C}^d$ in $\left[\End(\mathbb{R}^{2n})\right]^d$ and we extend the slice-topology $\tau_s$ on it, also proving some of its properties. In Section \ref{sc-wsr}, weak slice regular functions are defined on open sets in the slice-topology $\tau_s$. We prove a splitting lemma, an identity principle and the maximum modulus principle for these functions. In the case of the latter result, we discuss an issue coming from the definition of a norm in $\mathbb{R}^{2n}$. Section \ref{sc-pf} is devoted to an extension lemma as well as some other useful results. In Section \ref{sc-prf}, we prove a representation formula for weak slice regular functions, we define path-slice functions on subsets of $\mathcal{W}_{\mathcal{C}}^d$ and we show that weak slice regular functions are path-slice. In Section \ref{sc-ws}, we define a kind of real algebras, called LSCS algebras, and we apply the above result to this framework.  In Section \ref{sc-laa}, we state some results for some real left alternative algebras, and compare our theory with classical slice analysis in the case of Clifford algebras and alternative $*$-algebras. In Section \ref{sc-sc}, we provide an example of this new slice analysis in the case of sedenions.

\section{Preliminaries}\label{sc-pdb}

In this section, we briefly recall some notions which will be useful in the sequel, including transformation matrices, the Moore-Penrose inverse and complex structures on $\mathbb{R}^{2n}$, $n\in\mathbb{N}_+=\{1,2,...,k,...\}$.

\subsection{Transformation matrix}
As it is well known in linear algebra, every real linear map $T:\mathbb{R}^\ell\rightarrow\mathbb{R}^k$, once that two bases of $\mathbb{R}^\ell$ and $\mathbb{R}^k$ are fixed, can regarded as a $k\times\ell$ real matrix $\Im(T)$. The matrix $\Im(T)$ is called the transformation matrix of $T$ and in this subsection we recall some of its properties.

Denote by $\End\left(\mathbb{R}^{2n}\right)$ the set of real linear maps from $\mathbb{R}^{2n}$ to $\mathbb{R}^{2n}$, which turns out to be a real associative algebra. Then, there exists an isomorphism between the real associative algebras $\End\left(\mathbb{R}^{m}\right)$ and $\mathbb{R}^{2n\times 2n}$ given by
\begin{equation*}
	\begin{split}
		\Im:\quad\End\left(\mathbb{R}^{2n}\right)\ &\xlongrightarrow[\hskip1cm]{}\ \mathbb{R}^{2n\times 2n}
		\\ T\quad &\shortmid\!\xlongrightarrow[\hskip1cm]{}\quad \Im(T),
	\end{split}
\end{equation*}
where $\Im(T)\in\mathbb{R}^{2n\times 2n}$ is the unique matrix such that for each $v=(v_1,...,v_{2n})^T\in\mathbb{R}^{2n}$,
\begin{equation*}
	T(v)=\Im(T)v.
\end{equation*}

Let $k,\ell\in\mathbb{N}_+$ be arbitrary but fixed. One can define an isomorphism, still denoted by $\Im$, between the real vector spaces $\End\left(\mathbb{R}^{2n}\right)^{k\times\ell}$ and $\mathbb{R}^{2nk\times 2n\ell}$, by setting
\begin{equation*}
	\begin{split}
		\Im:\quad \End\left(\mathbb{R}^{2n}\right)^{k\times\ell}\quad &\xlongrightarrow[\hskip1cm]{}\qquad\left(\mathbb{R}^{2n\times 2n}\right)^{k\times\ell}=\mathbb{R}^{2nk\times 2n\ell}
		\\E=\begin{pmatrix}
			E_{1,1}&\cdots&E_{1,\ell}
			\\\vdots& &\vdots
			\\E_{k,1}&\cdots&E_{k,\ell}
		\end{pmatrix}\ &\shortmid\!\xlongrightarrow[\hskip1cm]{}\ \begin{pmatrix}
			\Im(E_{1,1})&\cdots&\Im(E_{1,\ell})
			\\\vdots& &\vdots
			\\\Im(E_{k,1})&\cdots&\Im(E_{k,\ell})
		\end{pmatrix},
	\end{split}
\end{equation*}
where $E_{\imath,\jmath}\in\mathbb{R}^{2n\times 2n}$.

We also define an operator $E^*\in\End\left(\mathbb{R}^{2n}\right)^{\ell\times k}$ by
\begin{equation*}
	E^*:=\Im^{-1}\left([\Im(E)]^T\right).
\end{equation*}

Let $E\in\End\left(\mathbb{R}^{2n}\right)^{k\times\ell}$ and $F\in\mathbb{R}^{2nk\times 2n\ell}$. To simplify the notation we write $\Im_E$ instead of $\Im(E)$, and $\Im^{-1}_F$ instead of $\Im^{-1}(F)$. We finally recall the following result:

\begin{prop}
	Let $k,\ell,m\in\mathbb{N}_+$, $S,T\in\End\left(\mathbb{R}^{2n}\right)^{k\times\ell}$ and $U\in\End\left(\mathbb{R}^{2n}\right)^{\ell\times m}$. Then the following statements hold.
	\begin{enumerate}[label=(\roman*)]
		\item $\Im_{S+T}=\Im_S+\Im_T$
		\item $\Im_{SU}=\Im_S \Im_U$.
		\item $(SU)^*=U^*S^*$
		\item If $m=\ell$ and $U$ is invertible, then $\Im_{U^{-1}}=(\Im_{U})^{-1}$.
	\end{enumerate}
\end{prop}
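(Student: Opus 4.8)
The plan is to reduce every statement to the single-block case, where $\Im:\End(\mathbb{R}^{2n})\to\mathbb{R}^{2n\times 2n}$ is already known to be an isomorphism of real associative algebras, and then lift the identities blockwise. Part (i) is immediate: since the block map $\Im$ is defined entrywise and each entry map is $\mathbb{R}$-linear, additivity on single endomorphisms gives $\Im\bigl((S+T)_{\imath,\jmath}\bigr)=\Im(S_{\imath,\jmath})+\Im(T_{\imath,\jmath})$, which is exactly the $(\imath,\jmath)$ block of $\Im_S+\Im_T$.

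For (ii) I would compute both sides block by block. The $(\imath,\jmath)$ block of $SU$ is $\sum_{p=1}^{\ell}S_{\imath,p}U_{p,\jmath}$, a finite sum of composites in $\End(\mathbb{R}^{2n})$. Applying $\Im$ and using (i) together with the multiplicativity of $\Im$ on single endomorphisms gives $\Im\bigl(\sum_p S_{\imath,p}U_{p,\jmath}\bigr)=\sum_p\Im(S_{\imath,p})\Im(U_{p,\jmath})$, which is precisely the $(\imath,\jmath)$ block of the ordinary matrix product $\Im_S\Im_U$. Hence $\Im_{SU}=\Im_S\Im_U$. This is the one step that genuinely uses the structure of block multiplication, but it amounts to nothing more than matching the two block-multiplication formulas, so I do not expect it to be an obstacle.

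With (ii) in hand, (iii) and (iv) are formal. For (iii) I first note that $SU\in\End(\mathbb{R}^{2n})^{k\times m}$ while $U^*S^*\in\End(\mathbb{R}^{2n})^{m\times k}$, so the sizes agree. By the definition of $(\cdot)^*$ and (ii), $\Im_{(SU)^*}=[\Im_{SU}]^T=[\Im_S\Im_U]^T=[\Im_U]^T[\Im_S]^T$; on the other hand, applying $\Im$ to $U^*S^*$ and using (ii) again gives $\Im_{U^*S^*}=\Im_{U^*}\Im_{S^*}=[\Im_U]^T[\Im_S]^T$. Since $\Im$ is injective, comparing these two expressions yields $(SU)^*=U^*S^*$.

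Finally, for (iv), with $m=\ell$ the block matrix $U$ is square and invertible, so $UU^{-1}=U^{-1}U=\Id$, the block identity in $\End(\mathbb{R}^{2n})^{\ell\times\ell}$. Applying $\Im$ and using (ii) gives $\Im_U\,\Im_{U^{-1}}=\Im_{U^{-1}}\,\Im_U=\Im_{\Id}=I_{2n\ell}$, where the last equality holds because $\Im$ sends the block identity (with $\id$ on the diagonal and $0$ elsewhere) to the $2n\ell\times 2n\ell$ identity matrix. Hence $\Im_{U^{-1}}=(\Im_U)^{-1}$. The only points requiring care throughout are the bookkeeping of block indices and the order reversal in (iii); there is no substantive difficulty beyond the block-multiplicativity established in (ii).
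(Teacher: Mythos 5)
Your proof is correct. The paper only \emph{recalls} this proposition without proof (it is a standard linear-algebra fact), and your argument --- reducing everything to the blockwise additivity and multiplicativity of the single-block isomorphism $\Im:\End(\mathbb{R}^{2n})\to\mathbb{R}^{2n\times 2n}$, then deriving (iii) and (iv) formally via injectivity of $\Im$ --- is exactly the standard argument the authors are implicitly relying on.
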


\subsection{The Moore-Penrose inverse}

In this subsection, we revise the definition of Moore-Penrose inverse and some of its properties. It generalizes the notion of inverse of a matrix and we will use it to give a new representation formula in the Section \ref{sc-prf}.

For any $k\times\ell$ real matrix $M$, $k,\ell\in\mathbb{N}_+$, there is a unique matrix $M^+\in\mathbb{R}^{\ell\times k}$ (called the Moore–Penrose inverse of $M$, and denoted by $M^+$) that satisfies the Moore-Penrose conditions:

\begin{enumerate}[label=(\roman*)]
	\item $MM^+M=M$.
	\item $M^+MM^+=M^+$.
	\item $(MM^+)^T=MM^+$.
	\item $(M^+M)^T=M^+M$.
\end{enumerate}

We recall some useful properties of the Moore-Penrose inverse.
\begin{prop}\label{pr-ekmp}
	Let $\ell,k\in\mathbb{N}_+$, $M\in\mathbb{R}^{\ell\times\ell}$, $N\in\mathbb{R}^{\ell\times k}$ and $P\in\mathbb{R}^{k\times k}$.
	\begin{enumerate}[label=(\roman*)]
		\item If $M$ is invertible, then $M^{-1}=M^+$.
		\item If $M,Q$ are unitary, then $(MNQ)^+=Q^{-1}N^+M^{-1}$.
	\end{enumerate}
\end{prop}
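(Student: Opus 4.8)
The plan for both statements is identical and rests on the characterization of the Moore-Penrose inverse as the \emph{unique} matrix satisfying the four Moore-Penrose conditions: it suffices to produce a candidate matrix and verify that it fulfills all four conditions, whereupon uniqueness forces it to coincide with the Moore-Penrose inverse.

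For (i), the natural candidate is $M^{-1}$ itself. First I would note that $MM^{-1}M = M$ and $M^{-1}MM^{-1} = M^{-1}$ settle conditions (i) and (ii) at once. For the symmetry conditions, since $MM^{-1} = M^{-1}M = I_\ell$ is symmetric, both $(MM^{-1})^T = MM^{-1}$ and $(M^{-1}M)^T = M^{-1}M$ hold trivially. Hence $M^{-1}$ satisfies all four conditions, and by uniqueness $M^+ = M^{-1}$.

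For (ii), set $A := MNQ$ and take as candidate $X := Q^{-1}N^+M^{-1}$; the dimensions are compatible, so $X \in \mathbb{R}^{k\times\ell}$, matching the expected shape of $A^+$. The two algebraic conditions follow by telescoping, the inner factors $QQ^{-1}$ and $M^{-1}M$ collapsing to identities: $AXA = M(NN^+N)Q = MNQ = A$ using $NN^+N = N$, and likewise $XAX = Q^{-1}(N^+NN^+)M^{-1} = Q^{-1}N^+M^{-1} = X$ using $N^+NN^+ = N^+$. The heart of the argument is the symmetry conditions, and this is precisely where the hypothesis that $M$ and $Q$ are \emph{unitary}---that is, real orthogonal, so that $M^{-1} = M^T$ and $Q^{-1} = Q^T$---becomes essential. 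Indeed, $AX = M(NN^+)M^{-1} = M(NN^+)M^T$, whose transpose is $M(NN^+)^TM^T = M(NN^+)M^T = AX$ by condition (iii) for $N$; and $XA = Q^{-1}(N^+N)Q = Q^T(N^+N)Q$, whose transpose equals $XA$ by condition (iv) for $N$. Thus $X$ satisfies all four conditions, and uniqueness yields $(MNQ)^+ = Q^{-1}N^+M^{-1}$.

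The only genuine subtlety---the main obstacle, such as it is---lies in (ii): the orthogonality of $M$ and $Q$ converts inverses into transposes, which is exactly what allows the transpose to be pushed through the conjugating factors and reduces the symmetry of $AX$ and $XA$ to the already-established symmetry of $NN^+$ and $N^+N$. Without the unitarity assumption the formula fails in general, so this hypothesis cannot be dropped.
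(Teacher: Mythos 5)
Your proof is correct: verifying the four Moore--Penrose conditions for the candidate matrices and invoking uniqueness is the standard argument, and your use of orthogonality ($M^{-1}=M^T$, $Q^{-1}=Q^T$) to reduce the symmetry of $AX$ and $XA$ to that of $NN^+$ and $N^+N$ is exactly right. The paper states this proposition without proof (it is recalled as a known fact in the preliminaries), so there is no authorial argument to compare against; note only that the statement's hypothesis names $P\in\mathbb{R}^{k\times k}$ while the conclusion uses $Q$, a typo you have implicitly and correctly resolved by taking $Q$ to be $k\times k$.
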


Let $J\in\End\left(\mathbb{R}^{2n}\right)^{k\times\ell}$, then the Moore-Penrose inverse of $J$ can be written as
\begin{equation}\label{eq-jil}
	J^+:=\Im^{-1}\left((\Im_J)^+\right).
\end{equation}

Since $\Im^{-1}$ is an isomorphism, $J^+$ shares the above properties listed for $(\Im_J)^+$.
\begin{prop}\label{pr-jmp}
	Let $J\in\End\left(\mathbb{R}^{2n}\right)^{k\times\ell}$. Then $J^+$ is the unique matrix in $\End\left(\mathbb{R}^{2n}\right)^{\ell\times k}$ that satisfies the Moore-Penrose conditions:
	\begin{enumerate}[label=(\roman*)]
		\item\label{it-jmp1} $JJ^+J=J$.
		\item $J^+JJ^+=J^+$.
		\item $(JJ^+)^*=JJ^+$.
		\item $(J^+J)^*=J^+J$.
	\end{enumerate}
\end{prop}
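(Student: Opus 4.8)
The plan is to transport the whole statement across the linear isomorphism $\Im\colon\End(\mathbb{R}^{2n})^{k\times\ell}\to\mathbb{R}^{2nk\times2n\ell}$, reducing it to the corresponding facts for ordinary real matrices recalled at the start of this subsection. Two dictionary identities drive everything. First, from the definition \eqref{eq-jil} one has immediately $\Im_{J^+}=(\Im_J)^+$. Second, from the definition $E^*=\Im^{-1}([\Im_E]^T)$ one gets, applying $\Im$, the compatibility $\Im_{E^*}=(\Im_E)^T$; that is, $\Im$ carries the $*$-operation to matrix transposition. Together with the additivity and multiplicativity of $\Im$ recorded in the proposition above (so that $\Im_{ST}=\Im_S\Im_T$), these identities let me rewrite each $\End$-valued equation as a matrix equation in $\Im_J$ and $(\Im_J)^+$.

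For existence, I would apply $\Im$ to each of the four conditions in turn. Condition \ref{it-jmp1}, $JJ^+J=J$, becomes $\Im_J(\Im_J)^+\Im_J=\Im_J$ after using multiplicativity and $\Im_{J^+}=(\Im_J)^+$; likewise the second condition becomes $(\Im_J)^+\Im_J(\Im_J)^+=(\Im_J)^+$. For the third, $\Im_{(JJ^+)^*}=[\Im_{JJ^+}]^T=[\Im_J(\Im_J)^+]^T$ while $\Im_{JJ^+}=\Im_J(\Im_J)^+$, so the condition reads $[\Im_J(\Im_J)^+]^T=\Im_J(\Im_J)^+$; the fourth is handled identically with $\Im_J$ and $(\Im_J)^+$ in the opposite order. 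These are precisely the four Moore–Penrose conditions defining $(\Im_J)^+$, which hold by definition of the matrix Moore–Penrose inverse. Hence $J^+$ satisfies the conditions (i)--(iv).

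For uniqueness, suppose $K\in\End(\mathbb{R}^{2n})^{\ell\times k}$ also satisfies (i)--(iv). Applying the same translation, now using $\Im_{K^*}=(\Im_K)^T$ together with multiplicativity, shows that $\Im_K$ satisfies the four matrix Moore–Penrose conditions relative to $\Im_J$. Since the matrix Moore–Penrose inverse is unique, $\Im_K=(\Im_J)^+=\Im_{J^+}$, and the injectivity of $\Im$ forces $K=J^+$, which finishes the argument.

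There is no serious obstacle here: the content is entirely the verification that $\Im$ is a $*$-preserving, multiplicative bijection, after which both existence and uniqueness descend verbatim from the matrix case. The only point requiring a little care is the bookkeeping for the transpose in conditions (iii) and (iv), where one must invoke $\Im_{E^*}=(\Im_E)^T$ rather than the multiplicative property; once that identity is in hand, the two self-adjointness conditions translate as smoothly as the first two.
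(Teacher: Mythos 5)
Your proof is correct and follows the same route as the paper, which simply notes that since $\Im$ is an isomorphism, $J^+$ inherits the Moore--Penrose properties from $(\Im_J)^+$. You have just spelled out the translation (multiplicativity of $\Im$, $\Im_{J^+}=(\Im_J)^+$, and $\Im_{E^*}=(\Im_E)^T$) that the paper leaves implicit.
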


It is clear that $U$ is unitary if and only if $\Im(U)$ is unitary. Then by Proposition \ref{pr-ekmp}, the following result holds.

\begin{prop}\label{pr-ekm}
	Let $\ell,k\in\mathbb{N}_+$, $I\in\End\left(\mathbb{R}^{2n}\right)^{\ell\times\ell}$, $J\in\End\left(\mathbb{R}^{2n}\right)^{\ell\times k}$ and $K\in\mathbb{R}^{k\times k}$.
	\begin{enumerate}[label=(\roman*)]
		\item\label{it-ekm1} If $I$ is invertible, then $I^{-1}=I^+$.
		\item\label{it-ekm2} If $I,K$ are unitary, then $(IJK)^+=K^{-1}J^+I^{-1}$.
	\end{enumerate}
\end{prop}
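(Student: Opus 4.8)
The plan is to deduce both statements from their matrix analogues in Proposition~\ref{pr-ekmp} by transporting the identities across the algebra isomorphism $\Im$. Three facts already at our disposal do all the work: the isomorphism $\Im$ is multiplicative and carries inverses to inverses, i.e.\ $\Im_{SU}=\Im_S\Im_U$ and $\Im_{U^{-1}}=(\Im_U)^{-1}$ whenever the products and inverses make sense; the Moore--Penrose inverse over $\End(\mathbb{R}^{2n})$ is \emph{defined} through $\Im$ by $J^+=\Im^{-1}\bigl((\Im_J)^+\bigr)$ as in \eqref{eq-jil}; and, by the remark preceding the statement, $U$ is unitary if and only if $\Im(U)$ is unitary. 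Because $\Im$ is a bijection respecting products, its inverse $\Im^{-1}$ is likewise multiplicative on blocks of compatible size and sends inverse matrices to inverse operators.

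For (i), suppose $I\in\End(\mathbb{R}^{2n})^{\ell\times\ell}$ is invertible. Then $\Im_I\in\mathbb{R}^{2n\ell\times 2n\ell}$ is invertible, so Proposition~\ref{pr-ekmp}(i) gives $(\Im_I)^{-1}=(\Im_I)^+$. Applying $\Im^{-1}$ and using $\Im_{I^{-1}}=(\Im_I)^{-1}$ we obtain
\begin{equation*}
	I^+=\Im^{-1}\bigl((\Im_I)^+\bigr)=\Im^{-1}\bigl((\Im_I)^{-1}\bigr)=\Im^{-1}\bigl(\Im_{I^{-1}}\bigr)=I^{-1},
\end{equation*}
which is the claim.

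For (ii), assume $I$ and $K$ are unitary. By the remark, $\Im_I$ and $\Im_K$ are unitary, and by multiplicativity $\Im_{IJK}=\Im_I\,\Im_J\,\Im_K$. Proposition~\ref{pr-ekmp}(ii) then yields
\begin{equation*}
	(\Im_{IJK})^+=(\Im_I\,\Im_J\,\Im_K)^+=(\Im_K)^{-1}(\Im_J)^+(\Im_I)^{-1}.
\end{equation*}
Finally I would apply $\Im^{-1}$ to both sides: the left-hand side becomes $(IJK)^+$ by \eqref{eq-jil}, while the right-hand side, using multiplicativity of $\Im^{-1}$ together with part (i) applied to $I$ and $K$, becomes $K^{-1}J^+I^{-1}$. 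This establishes $(IJK)^+=K^{-1}J^+I^{-1}$.

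The argument is essentially pure bookkeeping, so no step is genuinely hard; the one place demanding care is the last step, where one must confirm that the three factors $(\Im_K)^{-1}$, $(\Im_J)^+$, $(\Im_I)^{-1}$ have compatible block dimensions (namely $2nk\times 2nk$, $2nk\times 2n\ell$ and $2n\ell\times 2n\ell$, so that $K$ is read as an element of $\End(\mathbb{R}^{2n})^{k\times k}$) in order that $\Im^{-1}$ distributes over their product and the outcome lands in $\End(\mathbb{R}^{2n})^{k\times\ell}$, matching the type of $(IJK)^+$.
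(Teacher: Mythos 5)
Your proof is correct and follows exactly the route the paper intends: the paper derives Proposition \ref{pr-ekm} from Proposition \ref{pr-ekmp} by transporting everything through the isomorphism $\Im$, using the definition $J^+=\Im^{-1}\bigl((\Im_J)^+\bigr)$ and the observation that $U$ is unitary iff $\Im(U)$ is. You have simply written out in full the bookkeeping that the paper leaves implicit (including the sensible reading of $K\in\mathbb{R}^{k\times k}$ as an operator block of the right size), so there is nothing to add.
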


\begin{rmk}
As a motivation for this discussion on the Moore-Penrose transform we mention an example in slice octonionic analysis. We recall that $f:\mathbb{O}\rightarrow\mathbb{O}$ is called slice function if there is
$F:\mathbb{C}\rightarrow\mathbb{O}^{2\times 1}$, called the stem function of $f$, such that
\begin{equation*}
	f(x+yI)=(1,I)F(x+yi),\qquad\forall\ I\in\mathcal{S}_\mathbb{O},
\end{equation*}
where $\mathbb{S}_\mathbb{O}$ is defined by \eqref{eq-so}.
We can use the Moore-Penrose inverse to compute a solution $F$ to  the equations
\begin{equation*}
	f(x+yI)=(1,I)F(x+yi),\qquad I\in\{J_1,...,J_k\},
\end{equation*}
where $J_1,...,J_k\in\mathcal{S}_\mathbb{O}$. One of the solutions is
\begin{equation*}
	F(x+yi)=\begin{pmatrix}
		1&L_{J_1}\\\vdots&\vdots\\1&L_{J_k}
	\end{pmatrix}^+\begin{pmatrix}
		f(x+yJ_1)\\\vdots\\f(x+yJ_k)
	\end{pmatrix},
\end{equation*}
where $(\cdot)^+$ is the Moore-Penrose inverse defined by \eqref{eq-jil}.

\end{rmk}

\subsection{Complex structure}

We denote by $\mathfrak{C}_n$ the set of complex structures on $\mathbb{R}^{2n}$, i.e.
\begin{equation*}
	\mathfrak{C}_n:=\left\{T\in\End\left(\mathbb{R}^{2n}\right)\ :\ T^2=-1\right\},
\end{equation*}
where, for simplicity, we denote the identity map $id_{\mathbb{R}^{2n}}$ on $\mathbb{R}^{2n}$ by $1$. In this section, we recall some facts about complex structures.

\begin{defn}
For any $I\in\mathfrak{C}_n$, the set $\{\xi_1,...,\xi_n\}\subset\mathbb{R}^{2n}$ is called an $I$-basis of $\mathbb{R}^{2n}$ if
	\begin{equation*}
		\{\xi_1,...,\xi_n,I(\xi_1),...,I(\xi_n)\}
	\end{equation*}
	is a basis of $\mathbb{R}^{2n}$ as a real vector space.
\end{defn}

For any $I\in\mathfrak{C}_n$, let us choose a fixed $I$-basis of $\mathbb{R}^{2n}$ which is denoted by
\begin{equation}\label{eq-ti}
	\theta^I:=\{\theta^I_1,...,\theta^I_n\},
\end{equation}
and let us consider the ${2^n\times 2^n}$ real matrix $D_I$ given by
\begin{equation}\label{eq-dib} D_I:=\begin{pmatrix}\theta_1^I&\cdots&\theta_n^I&I\theta_1^I&\cdots&I\theta_n^I\end{pmatrix}.
\end{equation}

\begin{prop}
	The complex structures on $\mathbb{R}^{2n}$ are the operators in $\End\left(\mathbb{R}^{2n}\right)$ similar to \begin{equation}\label{eq-min}
		\mathbb{J}_{2n}=\begin{pmatrix*}
			& -\mathbb{I}_{n\times n}\\ \mathbb{I}_{n\times n}&
		\end{pmatrix*},
	\end{equation}
	i.e.
	\begin{equation*}
		\mathfrak{C}_n=\left\{\Im^{-1}\left(D\mathbb{
J}_{2n} D^{-1}\right):D\in GL_{2n}(\mathbb{R})\right\},
	\end{equation*}
	where $GL_{2n}(\mathbb{R})$ is the general linear group of $2n\times 2n$ matrices over $\mathbb{R}$.
	
	Moreover, for each $I\in\mathfrak{C}_n$, we have $D_I\in GL_{2n}(\mathbb{R})$ and
	\begin{equation}\label{eq-iii1}
		I=\Im^{-1}\left(D_I\mathbb{J}_{2n} D_I^{-1}\right).
	\end{equation}
\end{prop}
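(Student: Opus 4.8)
The plan is to establish the two inclusions defining $\mathfrak{C}_n$ together with the ``moreover'' identity, noting that the latter in fact supplies one of the inclusions. I would begin with the easy inclusion $\{\Im^{-1}(D\mathbb{J}_{2n}D^{-1}):D\in GL_{2n}(\mathbb{R})\}\subseteq\mathfrak{C}_n$. First observe that block multiplication gives $\mathbb{J}_{2n}^2=-\mathbb{I}_{2n\times 2n}$. Then, for any $D\in GL_{2n}(\mathbb{R})$, put $T:=\Im^{-1}(D\mathbb{J}_{2n}D^{-1})$; since $\Im$ is an isomorphism of real associative algebras, $\Im(T^2)=(D\mathbb{J}_{2n}D^{-1})^2=D\mathbb{J}_{2n}^2D^{-1}=-\mathbb{I}_{2n\times 2n}=\Im(-1)$, whence $T^2=-1$ and $T\in\mathfrak{C}_n$.

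The core of the argument is the ``moreover'' statement, and the reverse inclusion will follow from it at once. Fix $I\in\mathfrak{C}_n$ and consider the chosen $I$-basis $\theta^I$ together with the matrix $D_I$ of \eqref{eq-dib}. That $D_I\in GL_{2n}(\mathbb{R})$ holds because, by the definition of $I$-basis, its columns $\theta^I_1,\dots,\theta^I_n,I\theta^I_1,\dots,I\theta^I_n$ form a basis of $\mathbb{R}^{2n}$. To obtain the conjugation identity I would prove the equivalent equation $\Im(I)\,D_I=D_I\,\mathbb{J}_{2n}$ by comparing columns on both sides. Since $\Im(I)$ is the transformation matrix of $I$, the $j$-th column of $\Im(I)D_I$ is $I\theta^I_j$ for $1\le j\le n$, and $I(I\theta^I_j)=I^2\theta^I_j=-\theta^I_j$ for the last $n$ columns, where I have used $I^2=-1$. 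On the other side, writing $D_I=(C_1\mid C_2)$ with $C_1=(\theta^I_1\ \cdots\ \theta^I_n)$ and $C_2=(I\theta^I_1\ \cdots\ I\theta^I_n)$, block multiplication by $\mathbb{J}_{2n}$ yields $D_I\mathbb{J}_{2n}=(C_2\mid -C_1)$. The columns of the two products coincide, so $\Im(I)D_I=D_I\mathbb{J}_{2n}$; right-multiplying by $D_I^{-1}$ and applying $\Im^{-1}$ gives $I=\Im^{-1}(D_I\mathbb{J}_{2n}D_I^{-1})$, which is \eqref{eq-iii1}. Taking $D=D_I$ then proves $\mathfrak{C}_n\subseteq\{\Im^{-1}(D\mathbb{J}_{2n}D^{-1}):D\in GL_{2n}(\mathbb{R})\}$.

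Everything above is routine once $D_I$ is available, so the point deserving genuine attention is the existence of an $I$-basis for each $I\in\mathfrak{C}_n$; without it the matrix $D_I$ is not even defined. I expect this to be the main (if standard) obstacle. It amounts to the fact that $I$ endows $\mathbb{R}^{2n}$ with the structure of a complex vector space, with $i$ acting as $I$, so that a complex basis $\xi_1,\dots,\xi_n$ exists and $\{\xi_1,\dots,\xi_n,I\xi_1,\dots,I\xi_n\}$ is the required real basis. Concretely I would argue by induction: at each stage, if the span $V$ of the pairs $\{\xi_k,I\xi_k\}$ already chosen is not all of $\mathbb{R}^{2n}$, I pick $\xi\notin V$ and use the $I$-invariance of $V$ together with $I^2=-1$ to check that adjoining $\xi,I\xi$ keeps the collection linearly independent; since $\dim_{\mathbb{R}}\mathbb{R}^{2n}=2n$, the process stops after exactly $n$ pairs and produces the desired $I$-basis.
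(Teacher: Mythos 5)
Your proof is correct. The paper states this proposition in its preliminaries without any proof, treating it as a recalled fact from linear algebra, so there is no argument of the authors' to compare against; your two inclusions, the column-by-column verification of $\Im(I)D_I = D_I\mathbb{J}_{2n}$, and the inductive construction of an $I$-basis (the one genuinely non-vacuous point, which you rightly single out) constitute the standard and complete justification.
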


\section{Weak slice-cone}\label{sc-wsc}

In this section, we define a cone, which we call weak slice-cone $\mathcal{W}_\mathcal{C}^d$ of $\mathbb{R}^{2n}$. It is a subset of $\left[\End\left(\mathbb{R}^{2n}\right)\right]^d$ with a slice structure and plays the same role as the quadratic cone in a real alternative $*$-algebra. Then, we generalize the slice-topology in \cite{Dou2020001} to the weak slice-cone $\mathcal{W}_\mathcal{C}^d$ and we prove some properties  similar to those of the slice-topology in \cite{Dou2020001}.

\begin{defn}
	Let $\mathcal{C}\subset\mathfrak{C}_n$. We call
	\begin{equation*}
		\mathcal{W}_\mathcal{C}^d:=\bigcup_{I\in\mathcal{C}}\mathbb{C}_I^d
	\end{equation*}
	the $d$-dimensional weak slice-cone of $\mathcal{C}$, where
	\begin{equation*}
		\mathbb{C}_I^d:=\left(\mathbb{R}+\mathbb{R}I\right)^d.
	\end{equation*}
\end{defn}
By construction, we have
$
\mathbb{C}_I^d \subset \left[\End\left(\mathbb{R}^{2n}\right)\right]^d
$
for any $I\in\mathcal{C}\subset\mathfrak{C}_n$ so that $\mathcal{W}_\mathcal{C}^d\subset \left[\End\left(\mathbb{R}^{2n}\right)\right]^d$.
\begin{defn}
The set $\mathcal{C}\subset\mathfrak{C}_n$ is called symmetric, if
	\begin{equation*}
		\mathcal{C}=-\mathcal{C}:=\{-c\in\End\left(\mathbb{R}^{2n}\right):c\in\mathcal{C}\}.
	\end{equation*}
\end{defn}
We note that for each $\mathcal{C}\subset\mathfrak{C}_n$,
	\begin{equation*}
		\mathcal{W}_\mathcal{C}^d=\mathcal{W}_{\mathcal{C}^*}^d,
	\end{equation*}
	where $\mathcal{C}^*:=\mathcal{C}\cup(-\mathcal{C})$.
Thus, in the sequel we will assume the following:
\begin{ass}\label{as-cp}
	 Without loss of generality, we can assume that $\mathcal{C}$ is a non-empty symmetric subset of $\mathfrak{C}_n$. Thus, we can fix a subset $\mathcal{C}^+$ of $\mathcal{C}$ such that
	\begin{equation*}
		\mathcal{C}=\left(-\mathcal{C}^+\right)\bigsqcup\mathcal{C}^+.
	\end{equation*}
\end{ass}

Let $A$ be a (real) algebra. We denote by $L_a:A\rightarrow A$ the left-multiplication corresponding to $a\in A$, that is
\begin{equation*}
	L_a (b)=ab,\qquad\forall\ b\in A.
\end{equation*}

\begin{exa}
	We consider the case of the non-commutative algebra of real quaternions $\mathbb{H}$. As a real vector space it is isomorphic to $\mathbb{R}^4$. It is easy to check that the set
	\begin{equation*} \mathcal{C}_{\mathbb H}:=\left\{L_q\in\End\left(\mathbb{R}^4\right)\ :\ q\in\mathbb{H},\ q^2=-1\right\}
	\end{equation*}
	is a non-empty symmetric subset of $\mathfrak{C}_2=\mathfrak{C}(\mathbb{R}^{4})$. Writing $J$ instead of $L_J$, with a slight abuse of notation, we have that the set
	\begin{equation*}
		(\mathbb{H}\cong)\ L (\mathbb{H}):=\left\{L_a\ :\ a\in\mathbb{H}\right\}=\bigcup_{J\in\mathcal{C}_{\mathbb{H}}}\mathbb{C}_J\ \big(=\mathcal{W}_{\mathcal{C}_{\mathbb{H}}}^1\big)
	\end{equation*}
	is a $1$-dimensional weak slice-cone of $\mathcal{C}_{\mathbb{H}}$.
\end{exa}

Denote by $\tau\left(\left[\End\left(\mathbb{R}^{2n}\right)\right]^d\right)$ the Euclidean topology on the real vector space $\left[\End\left(\mathbb{R}^{2n}\right)\right]^d$. For each subset $U\subset\left[\End\left(\mathbb{R}^{2n}\right)\right]^d$, denote by $\tau(U)$ the subspace topology induced by $\tau\left(\left[\End\left(\mathbb{R}^{2n}\right)\right]^d\right)$. It is immediate that these two topologies  coincide on $\mathbb{C}_I^d$ for any $I\in\mathcal{C}\subseteq \mathfrak{C}_{n}$. We will refer to the subspace topology $\tau$ on $U$ as to the Euclidean topology on $U$.
\begin{lem}\label{lm-tlm}
	The set
	\begin{equation*}
		\tau_s\left(\mathcal{W}_{\mathcal{C}}^d\right):=\left\{\Omega\subset \mathcal{W}_{\mathcal{C}}^d\ :\ \Omega_I\in \tau(\mathbb{C}_I^d),\ \forall\ I\in\mathcal{C}\right\},
	\end{equation*}
	is a topology on $\mathcal{W}_{\mathcal{C}}^d$, where
	\begin{equation*}
		\Omega_I:=\Omega\cap\mathbb{C}_I^d.
	\end{equation*}
\end{lem}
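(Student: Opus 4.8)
The plan is to verify directly the three axioms of a topology, exploiting the fact that the slicing operation $\Omega\mapsto\Omega_I=\Omega\cap\mathbb{C}_I^d$ interacts well with the set-theoretic operations of union and intersection. The crucial elementary identities are that intersection with the fixed set $\mathbb{C}_I^d$ distributes over arbitrary unions and over finite (indeed arbitrary) intersections, that is, $\left(\bigcup_\alpha\Omega^\alpha\right)_I=\bigcup_\alpha\Omega^\alpha_I$ and $\left(\bigcap_\alpha\Omega^\alpha\right)_I=\bigcap_\alpha\Omega^\alpha_I$. These hold uniformly in $I$, so the defining condition ``$\Omega_I$ open for every $I$'' will be preserved under the operations in question.

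First I would check the extreme cases. For the empty set, $\emptyset_I=\emptyset\cap\mathbb{C}_I^d=\emptyset\in\tau(\mathbb{C}_I^d)$ for every $I\in\mathcal{C}$, so $\emptyset\in\tau_s(\mathcal{W}_{\mathcal{C}}^d)$. For the whole cone, since $\mathbb{C}_I^d\subset\mathcal{W}_{\mathcal{C}}^d$ by the very definition of the weak slice-cone, we have $(\mathcal{W}_{\mathcal{C}}^d)_I=\mathcal{W}_{\mathcal{C}}^d\cap\mathbb{C}_I^d=\mathbb{C}_I^d$, which is the whole ambient set of $\tau(\mathbb{C}_I^d)$ and hence open; thus $\mathcal{W}_{\mathcal{C}}^d\in\tau_s(\mathcal{W}_{\mathcal{C}}^d)$. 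Next, for an arbitrary family $\{\Omega^\alpha\}_{\alpha\in\Lambda}$ of members of $\tau_s(\mathcal{W}_{\mathcal{C}}^d)$, the distributivity identity gives $\left(\bigcup_\alpha\Omega^\alpha\right)_I=\bigcup_\alpha\Omega^\alpha_I$; since each $\Omega^\alpha_I\in\tau(\mathbb{C}_I^d)$ and an arbitrary union of open sets is open, this slice lies in $\tau(\mathbb{C}_I^d)$ for every $I$, whence $\bigcup_\alpha\Omega^\alpha\in\tau_s(\mathcal{W}_{\mathcal{C}}^d)$. For a finite intersection $\Omega^1,\dots,\Omega^m$ the argument is identical, using instead that a finite intersection of open sets is open in $\tau(\mathbb{C}_I^d)$.

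Since every step reduces to a standard property of the genuine topologies $\tau(\mathbb{C}_I^d)$ together with elementary set algebra, I do not expect any real obstacle here. The only points requiring a little care are the identity $(\mathcal{W}_{\mathcal{C}}^d)_I=\mathbb{C}_I^d$, which relies on the inclusion $\mathbb{C}_I^d\subset\mathcal{W}_{\mathcal{C}}^d$ already recorded after the definition of the weak slice-cone, and the observation that the distributive laws must be applied simultaneously for all $I\in\mathcal{C}$, so that openness of the slices is retained index by index. In essence, $\tau_s(\mathcal{W}_{\mathcal{C}}^d)$ is nothing but the topology making each inclusion $\mathbb{C}_I^d\hookrightarrow\mathcal{W}_{\mathcal{C}}^d$ an embedding onto a subspace, and the verification amounts to transcribing the usual coherence argument for such a construction.
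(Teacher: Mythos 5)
Your proof is correct and follows essentially the same route as the paper's: verifying the three axioms directly by using that intersection with $\mathbb{C}_I^d$ distributes over arbitrary unions and finite intersections, uniformly in $I$. The only difference is that you spell out the extreme cases (in particular $(\mathcal{W}_{\mathcal{C}}^d)_I=\mathbb{C}_I^d$) which the paper dismisses as clear.
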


\begin{proof}
	(i) It is clear that $\varnothing,\mathcal{W}_{\mathcal{C}}^d\in\tau_s\left(\mathcal{W}_{\mathcal{C}}^d\right)$.
	
	(ii) Let $U_1,...,U_k\in\tau_s\left(\mathcal{W}_{\mathcal{C}}^d\right)$. Then for any $I\in\mathcal{C}$ and $\ell\in\{1,...,k\}$,
	\begin{equation*}
		\left(U_\ell\right)_I\in\tau(\mathbb{C}_I^d).
	\end{equation*}
	It follows that
	\begin{equation*}
		\left(\bigcap_{\ell=1}^k U_\ell\right)\cap\mathbb{C}_I^d=\bigcap_{\ell=1}^k\left(U_\ell\right)_I
	\end{equation*}
	is open in $\mathbb{C}_I^d$. Since the choice of $I$ is arbitrary, it follows by definition that
	\begin{equation*}
		\bigcap_{\ell=1}^k U_\ell\in\tau_s\left(\mathcal{W}_{\mathcal{C}}^d\right).
	\end{equation*}
	
	(iii) Similarly, for any $U_\lambda\in\tau_s\left(\mathcal{W}_{\mathcal{C}}^d\right)$, $\lambda\in\Lambda$,
	\begin{equation*}
		\bigcup_{\lambda\in\Lambda} U_\lambda\in\tau_s\left(\mathcal{W}_{\mathcal{C}}^d\right).
	\end{equation*}
\end{proof}

\begin{defn}
	We call $\tau_s\left(\mathcal{W}_{\mathcal{C}}^d\right)$ the slice-topology on $\mathcal{W}_{\mathcal{C}}^d$.
\end{defn}

{\bf Convention}: Let $\Omega\subset\mathcal{W}_{\mathcal{C}}^d$. Denote by $\tau_s(\Omega)$ the subspace topology induced by $\tau_s\left(\mathcal{W}_{\mathcal{C}}^d\right)$. Open sets, domains, connected sets and paths in $\tau_s(\Omega)$ are called slice-open sets, slice-domains, slice-connected sets and slice-paths, respectively.

\begin{prop}\label{pr-sli}
	Let $\ell\in\mathbb{N}_+$, $I,J\in\mathcal{C}$ with $I\neq \pm J$. Then
	\begin{equation*}
		\mathbb{C}_I^\ell\cap\mathbb{C}_J^\ell=\mathbb{R}^\ell.
	\end{equation*}
\end{prop}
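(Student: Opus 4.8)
The plan is to reduce the statement to the one-variable case and then to a short linear-algebra computation. First I would observe that membership in $\mathbb{C}_I^\ell$ and in $\mathbb{C}_J^\ell$ is checked componentwise, so that
\begin{equation*}
	\mathbb{C}_I^\ell\cap\mathbb{C}_J^\ell=(\mathbb{C}_I\cap\mathbb{C}_J)^\ell.
\end{equation*}
Since $\mathbb{R}^\ell$ in the statement is to be read as $(\mathbb{R}\cdot 1)^\ell$, with $1=\id_{\mathbb{R}^{2n}}$ embedded diagonally in $\mathbb{C}_I$ and $\mathbb{C}_J$, it therefore suffices to prove the case $\ell=1$, namely $\mathbb{C}_I\cap\mathbb{C}_J=\mathbb{R}\cdot 1$.

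Next I would regard $\mathbb{C}_I=\mathbb{R}\cdot 1+\mathbb{R}\cdot I$ and $\mathbb{C}_J=\mathbb{R}\cdot 1+\mathbb{R}\cdot J$ as real subspaces of $\End\left(\mathbb{R}^{2n}\right)$. Each is $2$-dimensional: indeed $I$ cannot be a real multiple of $1$, because $I^2=-1$ whereas $(\lambda\cdot 1)^2=\lambda^2\cdot 1$, so $1$ and $I$ are linearly independent, and likewise $1$ and $J$. Clearly $\mathbb{R}\cdot 1\subseteq\mathbb{C}_I\cap\mathbb{C}_J$, so this intersection has real dimension at least $1$.

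The crux is to rule out dimension $2$. If $\dim_{\mathbb{R}}(\mathbb{C}_I\cap\mathbb{C}_J)=2$ then, both $\mathbb{C}_I$ and $\mathbb{C}_J$ being $2$-dimensional, we would have $\mathbb{C}_I=\mathbb{C}_J$, hence $J=a\cdot 1+b\cdot I$ for some $a,b\in\mathbb{R}$. Imposing $J^2=-1$ and expanding yields $(a^2-b^2)\cdot 1+2ab\,I=-1\cdot 1$; by the linear independence of $1$ and $I$ this forces $2ab=0$ and $a^2-b^2=-1$. The case $b=0$ is impossible over $\mathbb{R}$, so $a=0$ and $b=\pm 1$, that is $J=\pm I$, contradicting the hypothesis $I\neq\pm J$. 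Hence $\dim_{\mathbb{R}}(\mathbb{C}_I\cap\mathbb{C}_J)=1$, so $\mathbb{C}_I\cap\mathbb{C}_J=\mathbb{R}\cdot 1$, and raising to the $\ell$-th power via the first displayed identity gives the claim.

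I expect the only real obstacle to be bookkeeping: making the componentwise reduction precise and confirming that $1$ genuinely lies in both cones. The linear-algebra step, though it is the heart of the argument, is a direct computation once the dimensions of $\mathbb{C}_I$ and $\mathbb{C}_J$ are pinned down and the relation $J^2=-1$ is exploited.
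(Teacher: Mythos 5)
Your proof is correct and follows essentially the same route as the paper: both reduce to the case $\ell=1$ and both hinge on the observation that if $J$ were forced into $\mathbb{C}_I$, then $J^2=-1$ would give $J=\pm I$, contradicting the hypothesis. The only cosmetic differences are that you deduce $J\in\mathbb{C}_I$ by a dimension count rather than by rescaling a non-real element of the intersection, and you solve $z^2=-1$ in $\mathbb{C}_I$ by expanding $(a\cdot 1+b\cdot I)^2$ instead of invoking the identification of $\mathbb{C}_I$ with the complex plane.
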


\begin{proof}
	Note that $\mathbb{C}_I$ can be identified with a complex plane. The equation $z^2=-1$, $z\in\mathbb{C}_I$, only has two solutions $z=\pm I$.
	
	Suppose that $\lambda+\mu J\in\mathbb{C}_I$, for some $\lambda\in\mathbb{R}$ and $\mu\in\mathbb{R}\backslash\{0\}$. It is clear that
	\begin{equation*}
		J=\mu^{-1}[(\lambda+\mu J)-\lambda]\in\mathbb{C}_I.
	\end{equation*}
	Since $J^2=-1$, we have $J=\pm I$, which is a contradiction. Hence $\mathbb{C}_I\cap\mathbb{C}_J=\mathbb{R}$ and then $\mathbb{C}_I^\ell\cap\mathbb{C}_J^\ell=\mathbb{R}^\ell$.
\end{proof}

\begin{prop}\label{pr-slm}
	$\left(\mathcal{W}_{\mathcal{C}}^d,\tau_s\right)$ is a Hausdorff space and $\tau\left(\mathcal{W}_{\mathcal{C}}^d\right)\subset\tau_s\left(\mathcal{W}_{\mathcal{C}}^d\right)$.
\end{prop}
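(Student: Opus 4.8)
The plan is to prove two separate assertions: first that $\tau\left(\mathcal{W}_{\mathcal{C}}^d\right)\subset\tau_s\left(\mathcal{W}_{\mathcal{C}}^d\right)$, and second that $\left(\mathcal{W}_{\mathcal{C}}^d,\tau_s\right)$ is Hausdorff. I would tackle the inclusion first, since it is the more routine of the two and it also supplies a useful tool for the Hausdorff part. Let $\Omega\in\tau\left(\mathcal{W}_{\mathcal{C}}^d\right)$, so $\Omega=U\cap\mathcal{W}_{\mathcal{C}}^d$ for some Euclidean-open $U\subset\left[\End\left(\mathbb{R}^{2n}\right)\right]^d$. For each $I\in\mathcal{C}$, I would compute $\Omega_I=\Omega\cap\mathbb{C}_I^d=U\cap\mathbb{C}_I^d$, and observe that $U\cap\mathbb{C}_I^d$ is exactly the intersection of a Euclidean-open set with $\mathbb{C}_I^d$, hence open in the subspace (Euclidean) topology $\tau(\mathbb{C}_I^d)$. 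Since this holds for every $I\in\mathcal{C}$, the defining condition of $\tau_s$ in Lemma~\ref{lm-tlm} is satisfied, so $\Omega\in\tau_s\left(\mathcal{W}_{\mathcal{C}}^d\right)$. This gives the inclusion.

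For the Hausdorff property, take two distinct points $p,q\in\mathcal{W}_{\mathcal{C}}^d$. The key structural fact I would exploit is Proposition~\ref{pr-sli}: distinct slices $\mathbb{C}_I^d$ and $\mathbb{C}_J^d$ (with $I\neq\pm J$) meet only in $\mathbb{R}^d$, which means the slices are ``almost disjoint.'' I would split into two cases. In the case where $p$ and $q$ lie in a common slice $\mathbb{C}_I^d$, the ordinary Euclidean (Hausdorff) separation inside $\mathbb{C}_I^d\cong\mathbb{C}^d$ combined with the inclusion $\tau\subset\tau_s$ just proved lets me separate them by slice-open sets directly. The genuinely interesting case is when no single slice contains both points.

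In that second case I would use the almost-disjointness from Proposition~\ref{pr-sli} to build the separating sets slice-by-slice. The natural strategy is to choose disjoint Euclidean balls $B_p$ and $B_q$ around $p$ and $q$ in the ambient space, intersect with $\mathcal{W}_{\mathcal{C}}^d$, and check that the resulting sets lie in $\tau_s$ by verifying that each of $(B_p\cap\mathcal{W}_{\mathcal{C}}^d)_I$ and $(B_q\cap\mathcal{W}_{\mathcal{C}}^d)_I$ is Euclidean-open in $\mathbb{C}_I^d$; but this is immediate once we know $\tau\subset\tau_s$, because Euclidean balls are already Euclidean-open. So in fact the Hausdorff property reduces cleanly to the inclusion $\tau\subset\tau_s$ together with the fact that the ambient Euclidean space $\left[\End\left(\mathbb{R}^{2n}\right)\right]^d$ is itself Hausdorff: any two distinct points are separated by disjoint Euclidean-open sets in the ambient space, whose traces on $\mathcal{W}_{\mathcal{C}}^d$ are Euclidean-open in $\mathcal{W}_{\mathcal{C}}^d$ and hence slice-open.

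The main obstacle I anticipate is resisting the temptation to overcomplicate the Hausdorff argument with case analysis on slices: once the inclusion $\tau\left(\mathcal{W}_{\mathcal{C}}^d\right)\subset\tau_s\left(\mathcal{W}_{\mathcal{C}}^d\right)$ is in hand, Hausdorffness is inherited from the ambient Euclidean topology almost for free, since a finer topology on a subspace of a Hausdorff space remains Hausdorff. The only point requiring genuine care is the verification of the inclusion itself, namely checking that restricting an ambient-open set to each slice $\mathbb{C}_I^d$ yields a set open in $\tau(\mathbb{C}_I^d)$ — and here I would lean on the remark preceding Lemma~\ref{lm-tlm} that the ambient and subspace Euclidean topologies agree on each $\mathbb{C}_I^d$.
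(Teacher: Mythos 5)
Your proposal is correct and follows essentially the same route as the paper: prove the inclusion $\tau\left(\mathcal{W}_{\mathcal{C}}^d\right)\subset\tau_s\left(\mathcal{W}_{\mathcal{C}}^d\right)$ by restricting an ambient open set to each slice, then deduce Hausdorffness because a topology finer than a Hausdorff one is Hausdorff. The case analysis via Proposition~\ref{pr-sli} that you considered and then discarded is indeed unnecessary, exactly as you concluded.
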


\begin{proof}
	Let $\Omega\in\tau\left(\mathcal{W}_{\mathcal{C}}^d\right)$. By definition, there is an open set $U\in\left[\End\left(\mathbb{R}^{2n}\right)\right]^d$ such that $U\cap\mathcal{W}_{\mathcal{C}}^d=\Omega$. It is clear that
	\begin{equation*}
		U\cap\mathbb{C}_I^d=\Omega_I\in\tau(\mathbb{C}_I^d),\qquad\forall\ I\in\mathcal{C}.
	\end{equation*}
	Hence $\Omega\in\tau_s\left(\mathcal{W}_{\mathcal{C}}^d\right)$. It implies that $\tau\left(\mathcal{W}_{\mathcal{C}}^d\right)\subset\tau_s\left(\mathcal{W}_{\mathcal{C}}^d\right)$. Since $\left(\left[\End\left(\mathbb{R}^{2n}\right)\right]^d,\tau\right)$ is Hausdorff, the subspace $\left(\mathcal{W}_{\mathcal{C}}^d,\tau\right)$ is Hausdorff, so is $\left(\mathcal{W}_{\mathcal{C}}^d,\tau_s\right)$.
\end{proof}

It is easy to check that
\begin{equation*}
	\tau\left(\mathbb{R}^d\right)=\tau_s\left(\mathbb{R}^d\right)
\end{equation*}
and
\begin{equation}\label{eq-tmd}
	\tau\left(\mathbb{C}_I^d\right)=\tau_s\left(\mathbb{C}^d_I\right),\qquad\forall\ I\in\mathcal{C}.
\end{equation}

\begin{defn}
	A subset $\Omega$ of $\mathcal{W}_{\mathcal{C}}^d$ is called real-connected, if $$\Omega_{\mathbb{R}}:=\Omega\cap\mathbb{R}^d$$ is connected in $\mathbb{R}^d$. In particular, when $\Omega\cap\mathbb{R}^d=\varnothing$, $\Omega$ is real-connected, since the empty set is connected, by assumption.
\end{defn}

\begin{prop}\label{pr-wso}
	Let $\Omega$ be a slice-open set in $\mathcal{W}_{\mathcal{C}}^d$ and $q\in\Omega$. Then there is a real-connected slice-domain $U\subset\Omega$ containing $q$.
\end{prop}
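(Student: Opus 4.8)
The plan is to distinguish the cases $q\notin\mathbb{R}^d$ and $q\in\mathbb{R}^d$, the latter being the substantial one, and throughout to exploit two facts from the excerpt: by \eqref{eq-tmd} the slice-topology restricted to any plane $\mathbb{C}_I^d$ coincides with the Euclidean topology, and by Proposition \ref{pr-sli} two distinct planes $\mathbb{C}_I^d,\mathbb{C}_J^d$ (with $I\neq\pm J$) meet exactly along $\mathbb{R}^d$. Suppose first $q\notin\mathbb{R}^d$. Then Proposition \ref{pr-sli} forces $q$ to lie on a single plane $\mathbb{C}_I^d=\mathbb{C}_{-I}^d$ and $q\notin\mathbb{C}_J^d$ for $J\neq\pm I$. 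Since $\Omega_I$ is Euclidean-open in $\mathbb{C}_I^d$ and $\mathbb{R}^d$ is closed, I would pick a Euclidean ball $B\subset\mathbb{C}_I^d$ centered at $q$ with $B\subset\Omega_I$ and $B\cap\mathbb{R}^d=\varnothing$. For $J\neq\pm I$ we have $B\cap\mathbb{C}_J^d\subset\mathbb{C}_I^d\cap\mathbb{C}_J^d=\mathbb{R}^d$, hence empty, so $B$ is slice-open; it is $\tau_s$-connected since the slice- and Euclidean topologies agree on $\mathbb{C}_I^d$; and it is real-connected because $B\cap\mathbb{R}^d=\varnothing$. Thus $U:=B$ works.

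Now assume $q\in\mathbb{R}^d$. I would first note that $\Omega_{\mathbb{R}}=\Omega\cap\mathbb{R}^d=\Omega_I\cap\mathbb{R}^d$ is open in $\mathbb{R}^d$ for every $I$, and let $G$ be the connected component of $q$ in $\Omega_{\mathbb{R}}$, which is open and connected since $\mathbb{R}^d$ is locally connected; this $G$ will be the real trace of the set I construct. For each $I\in\mathcal{C}$ and each $x\in G$ I choose radii
\begin{equation*}
	\rho_I(x):=\min\big(r_I(x),\delta(x)\big),
\end{equation*}
where $r_I(x)>0$ satisfies $B_I(x,r_I(x))\subset\Omega_I$ (possible since $\Omega_I$ is open and $x\in G\subset\Omega_I$) and $\delta(x)>0$ satisfies $\{y\in\mathbb{R}^d:\dist(x,y)<\delta(x)\}\subset G$ (possible since $G$ is open), and $B_I(x,r)$ denotes the Euclidean ball in $\mathbb{C}_I^d$. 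Setting $U_I:=\bigcup_{x\in G}B_I(x,\rho_I(x))$ and $U:=\bigcup_{I\in\mathcal{C}}U_I$, I expect that $U_I$ is open in $\mathbb{C}_I^d$ with $G\subset U_I\subset\Omega_I$, that $U_I\cap\mathbb{R}^d=G$ (since the trace of each $B_I(x,\rho_I(x))$ on $\mathbb{R}^d$ is a real ball of radius at most $\delta(x)$ about $x$, hence inside $G$), and that $U_I$ is $\tau_s$-connected, being a union of Euclidean balls each meeting the connected set $G$ at its center.

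It then remains to verify the three properties of $U$. For slice-openness, using Proposition \ref{pr-sli} one computes, for each $I$, that $U\cap\mathbb{C}_I^d=U_I\cup\bigcup_{J\neq\pm I}(U_J\cap\mathbb{R}^d)=U_I\cup G=U_I$, which is open, so $U\in\tau_s(\mathcal{W}_\mathcal{C}^d)$. For real-connectedness, $U\cap\mathbb{R}^d=\bigcup_I(U_I\cap\mathbb{R}^d)=G$ is connected. For connectedness, every $U_I$ contains $q\in G$ and is $\tau_s$-connected, so $U$ is $\tau_s$-connected as a union of connected subspaces through the common point $q$. Since also $U=\bigcup_I U_I\subset\bigcup_I\Omega_I\subset\Omega$, the set $U$ is the desired real-connected slice-domain containing $q$.

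The main obstacle I anticipate is the simultaneous radius control in the construction of $U_I$: the $\rho_I(x)$ must be small enough both to keep $B_I(x,\rho_I(x))\subset\Omega_I$ and to force $U_I\cap\mathbb{R}^d=G$ exactly, i.e.\ to prevent real points outside $G$ from entering while never omitting points of $G$, yet large enough that $U_I$ stays connected; the choice $\rho_I(x)=\min(r_I(x),\delta(x))$ is designed precisely to reconcile these requirements. The remaining bookkeeping, in particular the slice-openness identity $U\cap\mathbb{C}_I^d=U_I$, rests essentially on the fact from Proposition \ref{pr-sli} that distinct planes meet only along $\mathbb{R}^d$.
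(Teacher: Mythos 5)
Your argument is correct, but it takes a genuinely more constructive route than the paper. The paper's proof is a two-line connected-component argument: with $D$ the connected component of $\Omega_{\mathbb{R}}$ containing $q$ (and $D:=\varnothing$ if $q\notin\mathbb{R}^d$), it takes $U$ to be the slice-connected component of $(\Omega\setminus\Omega_{\mathbb{R}})\cup D$ containing $q$, leaving as ``easy to check'' that this set is slice-open and that the component is a real-connected slice-domain. Your explicit tube $U=\bigcup_{I}\bigcup_{x\in G}B_I(x,\rho_I(x))$ with $\rho_I(x)=\min(r_I(x),\delta(x))$ is essentially that verification made explicit: the constraint $\rho_I(x)\le\delta(x)$ is precisely what makes $(\Omega\setminus\Omega_{\mathbb{R}})\cup D$ slice-open at real points and forces the real trace of the neighbourhood to be exactly one component of $\Omega_{\mathbb{R}}$. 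The paper's approach buys brevity and a canonical (maximal) choice of $U$, but it implicitly uses that slice-connected components of slice-open sets are slice-open (local connectedness of $\tau_s$); yours is self-contained and does not need that fact. One small slip: in the computation $U\cap\mathbb{C}_I^d=U_I\cup\bigcup_{J\neq\pm I}(U_J\cap\mathbb{R}^d)$ you omit the contribution of $U_{-I}$, which lives in $\mathbb{C}_{-I}^d=\mathbb{C}_I^d$ and need not coincide with $U_I$ since the radii $r_{\pm I}(x)$ were chosen independently; either fix $\rho_{-I}=\rho_I$ (harmless, as $\Omega_{-I}=\Omega_I$) or note that $U\cap\mathbb{C}_I^d=U_I\cup U_{-I}$ is still open and still has real trace $G$. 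This does not affect any of the three conclusions.
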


\begin{proof}
The proof is similar to the proof of analogous statements in \cite{Dou2020001,Dou2020003}. We repeat here the main arguments: if $q\in\mathbb{R}^d$, then denote by $D$ the connected component of $\Omega_{\mathbb{R}}$ containing $q$ in $\mathbb{R}^d$; otherwise, set $D:=\varnothing$. Let $U$ be the slice-connected component of $(\Omega\backslash\Omega_{\mathbb{R}})\cup D$ containing $q$. It is easy to check that $q\in U\subset\Omega$ and $U$ is a real-connected slice-domain.
\end{proof}

Now we describe slice-connectedness by means of slice-paths.

\begin{defn}
	A path $\gamma$ in $\left(\mathcal{W}_{\mathcal{C}}^d,\tau\right)$ is called on a slice, if $\gamma\subset\mathbb{C}_I^d$ for some $I\in\mathcal{C}$.
\end{defn}

\begin{prop}
	Each path in $\left(\mathcal{W}_{\mathcal{C}}^d,\tau\right)$ on a slice is a slice-path.
\end{prop}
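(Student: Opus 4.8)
The plan is to reduce the statement to the already-established equality \eqref{eq-tmd}, namely $\tau(\mathbb{C}_I^d)=\tau_s(\mathbb{C}_I^d)$, which says that the Euclidean and slice subspace topologies agree on a single slice. Since a path on a slice never leaves $\mathbb{C}_I^d$, it cannot ``see'' any difference between the two topologies, and the result should follow by an elementary continuity-of-composition argument.

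First I would fix a path $\gamma\colon[0,1]\to\mathcal{W}_{\mathcal{C}}^d$ that is continuous with respect to $\tau$ and whose image lies in $\mathbb{C}_I^d$ for some $I\in\mathcal{C}$. Because $\gamma([0,1])\subseteq\mathbb{C}_I^d$, the map corestricts to a map $\widetilde\gamma\colon[0,1]\to\mathbb{C}_I^d$, and since $\gamma$ is continuous into $(\mathcal{W}_{\mathcal{C}}^d,\tau)$, the corestriction $\widetilde\gamma$ is continuous into $(\mathbb{C}_I^d,\tau(\mathbb{C}_I^d))$, the latter carrying the Euclidean subspace topology.

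Next I would invoke \eqref{eq-tmd} to rewrite the target topology: since $\tau(\mathbb{C}_I^d)=\tau_s(\mathbb{C}_I^d)$, the same map $\widetilde\gamma$ is continuous as a map into $(\mathbb{C}_I^d,\tau_s(\mathbb{C}_I^d))$, where $\tau_s(\mathbb{C}_I^d)$ denotes the subspace topology induced by $\tau_s(\mathcal{W}_{\mathcal{C}}^d)$. Finally, the inclusion $\iota\colon(\mathbb{C}_I^d,\tau_s(\mathbb{C}_I^d))\hookrightarrow(\mathcal{W}_{\mathcal{C}}^d,\tau_s)$ is continuous by the very definition of the subspace topology, so the composition $\gamma=\iota\circ\widetilde\gamma$ is continuous with respect to $\tau_s$; that is, $\gamma$ is a slice-path.

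Alternatively, and perhaps more transparently, one can argue directly at the level of preimages: for any slice-open $\Omega\in\tau_s(\mathcal{W}_{\mathcal{C}}^d)$ one has $\gamma^{-1}(\Omega)=\gamma^{-1}(\Omega\cap\mathbb{C}_I^d)=\gamma^{-1}(\Omega_I)$, and $\Omega_I\in\tau(\mathbb{C}_I^d)$ holds by the defining condition of $\tau_s$ in Lemma~\ref{lm-tlm}; continuity of $\gamma$ as a Euclidean map then makes $\gamma^{-1}(\Omega_I)$ open in $[0,1]$. I do not expect any genuine obstacle here: the entire content sits in the coincidence of the two topologies on one slice, and the only thing to be careful about is the bookkeeping of the two subspace-topology identifications (Euclidean versus slice) on $\mathbb{C}_I^d$.
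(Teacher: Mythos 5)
Your proposal is correct and follows exactly the paper's route: the paper's entire proof is the one-line observation that the statement "holds directly by \eqref{eq-tmd}", i.e.\ by the coincidence $\tau(\mathbb{C}_I^d)=\tau_s(\mathbb{C}_I^d)$, which is precisely the key point you identify and then carefully unpack via corestriction and the subspace-topology inclusion.
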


\begin{proof}
	This proposition holds directly by \eqref{eq-tmd}.
\end{proof}

\begin{prop}\label{pr-wat}
	Let $\Omega$ be a real-connected slice-domain in $\mathcal{W}_{\mathcal{C}}^d$. Then the following statements hold:
	\begin{enumerate}[label=(\roman*)]
		
		\item\label{it-wat12} If $\Omega_\mathbb{R}=\varnothing$, then $\Omega\subset\mathbb{C}_I^d$ for some $I\in\mathcal{C}$.
		
		\item\label{it-wat22} If $\Omega_\mathbb{R}\neq\varnothing$, then for any $q\in \Omega$ and $x\in \Omega_\mathbb{R}$, there is a path on a slice from $q$ to $x$.
		
	\end{enumerate}
\end{prop}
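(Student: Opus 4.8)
The plan is to exploit the ``slice'' structure of $\mathcal{W}_{\mathcal{C}}^d$: by Proposition \ref{pr-sli} two distinct slices meet only along $\mathbb{R}^d$, so the real axis is the unique gluing locus, and both parts reduce to producing a set that is simultaneously slice-open and slice-closed and then invoking slice-connectedness of $\Omega$ (a slice-domain has no non-trivial clopen subsets).

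For part \ref{it-wat12}, I would use Assumption \ref{as-cp} to index the distinct slices as $\{\mathbb{C}_I^d:I\in\mathcal{C}^+\}$ and show first that each $\Omega_I:=\Omega\cap\mathbb{C}_I^d$ is slice-open. Indeed, for $J=\pm I$ one has $(\Omega_I)_J=\Omega_I$, which is open in $\mathbb{C}_I^d$ because $\Omega$ is slice-open; while for $J\neq\pm I$ Proposition \ref{pr-sli} gives $(\Omega_I)_J\subset\mathbb{C}_I^d\cap\mathbb{C}_J^d=\mathbb{R}^d$, hence $(\Omega_I)_J\subset\Omega_{\mathbb{R}}=\varnothing$. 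The same computation shows the sets $\Omega_I$, $I\in\mathcal{C}^+$, are pairwise disjoint and cover $\Omega$, so each is also slice-closed in $\Omega$ (its complement being a union of slice-open sets). As $\Omega$ is slice-connected and non-empty, some non-empty $\Omega_{I_0}$ must equal $\Omega$, giving $\Omega\subset\mathbb{C}_{I_0}^d$.

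For part \ref{it-wat22}, I would fix $x\in\Omega_{\mathbb{R}}$ and let $P$ be the set of $q\in\Omega$ that can be joined to $x$ by a path on a slice lying in $\Omega$; the goal is $P=\Omega$, proved again by showing $P$ is non-empty, slice-open and slice-closed. Two facts are used repeatedly. First, $\Omega_{\mathbb{R}}=\Omega_I\cap\mathbb{R}^d$ is open in $\mathbb{R}^d$ and connected by hypothesis, hence path-connected, so $\Omega_{\mathbb{R}}\subset P$ (such paths stay in $\mathbb{R}^d$, which lies in every slice). Second, by Proposition \ref{pr-sli} each $q\notin\mathbb{R}^d$ lies in a unique slice $\mathbb{C}_{I(q)}^d$. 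The local openness/closedness computations then amount to choosing, around a point $q$, a ball $B$ in the relevant slice $\mathbb{C}_J^d$ with $B\subset\Omega_J$ (possible since $\Omega_J$ is Euclidean-open) and joining points of $B$ to $q$ by segments, which are paths on $\mathbb{C}_J^d$ by \eqref{eq-tmd}.

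The only delicate point — and the heart of the argument — is that a path on a slice must remain inside a single $\mathbb{C}_I^d$, so concatenations are legitimate only when carried out within one slice; this is exactly where the real axis is pivotal. If $q\in\mathbb{R}^d$, the segment $B$-path on slice $J$ concatenates with a path from $q$ to $x$ inside $\Omega_{\mathbb{R}}\subset\mathbb{R}^d\subset\mathbb{C}_J^d$, so everything stays on slice $J$; if $q\notin\mathbb{R}^d$, then $q$ and the nearby points of $P$ share the single slice $\mathbb{C}_{I(q)}^d$, so the existing path and the new segment again lie on one slice. These two cases establish that $P$ is slice-open, and a symmetric argument (a ball around a non-real $q\notin P$ that is disjoint from $\mathbb{R}^d$ cannot meet $P$, else $q\in P$) shows $\Omega\setminus P$ is slice-open, i.e. $P$ is slice-closed. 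Hence $P$ is a non-empty clopen subset of the slice-connected set $\Omega$, so $P=\Omega$, which is the assertion. I expect the bookkeeping needed to keep every concatenated path on a single slice to be the main obstacle, with the openness of $\Omega_{\mathbb{R}}$ in $\mathbb{R}^d$ and the uniqueness in Proposition \ref{pr-sli} being the tools that make it go through.
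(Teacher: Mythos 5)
Your argument is correct. For part \ref{it-wat12} it coincides with the paper's: both decompose $\Omega$ into the pairwise disjoint slice-open pieces $\Omega\cap\left(\mathbb{C}_J^d\setminus\mathbb{R}^d\right)$, $J\in\mathcal{C}^+$, and invoke slice-connectedness. For part \ref{it-wat22} you take a genuinely different (though equivalent) route: you fix $x$ and show that the set $P$ of points of $\Omega$ reachable from $x$ by a path on a slice is non-empty, slice-open and slice-closed, checking openness slice by slice and using Proposition \ref{pr-sli} to pin down the slice containing a non-real point so that concatenations stay on a single slice. The paper instead fixes $q\in\mathbb{C}_I^d$ and works with the connected component $V$ of $\Omega_I$ containing $q$: one clopen argument shows $V$ must meet $\mathbb{R}^d$ (otherwise $V$ and $\Omega\setminus V$ would disconnect $\Omega$), and the desired path is then exhibited directly as $\alpha\beta$, with $\alpha$ a path in $V$ from $q$ to a real point $x_0$ and $\beta$ a path in $\Omega_\mathbb{R}$ from $x_0$ to $x$. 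The two proofs rest on the same ingredients (Proposition \ref{pr-sli}, openness and path-connectedness of $\Omega_\mathbb{R}$ and of the components of $\Omega_I$, slice-connectedness of $\Omega$); the paper's version is shorter because only one clopen verification is needed, while yours trades that for the routine but lengthier bookkeeping of verifying that $P$ and $\Omega\setminus P$ are slice-open at real and non-real points separately --- bookkeeping which you carry out correctly.
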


\begin{proof}
	\ref{it-wat12}. If $\Omega_\mathbb{R}=\varnothing$, then (see Assumption \ref{as-cp} for the definition of $\mathcal{C}^+$)
	\begin{equation*}
		\Omega\subset\bigsqcup_{J\in\mathcal{C}^+}\left(\mathbb{C}_J^d\backslash\mathbb{R}^d\right).
	\end{equation*}
	Note that $\mathbb{C}_J^d\backslash\mathbb{R}^d$ is slice-open in $\mathcal{W}_{\mathcal{C}}^d$ for each $J\in\mathcal{C}$. Since $\Omega$ is slice-connected, $\Omega\subset\mathbb{C}_I^d$ for some $I\in\mathcal{C}$.

	\ref{it-wat22}. Let $q\in\Omega$ and $x\in\Omega_\mathbb{R}$. Then $q\in\mathbb{C}_I^d$ for some $I\in\mathcal C$. Since $\Omega$ is a slice-domain in $\mathcal{W}_{\mathcal{C}}^d$, it follows by definition that $\Omega_I$ is an open set in $\mathbb C_I^d$.
	Denote by $V$ the connected component of $\Omega_I$ containing $q$. Then $V$ is also open in $\mathbb C_I^d$. By definition, $\mathbb{C}_I^d\backslash\mathbb{R}^d$ and $\bigcup_{J\in\mathcal{C}\backslash\{\pm I\}}\left(\mathbb{C}_J^d\backslash\mathbb{R}^d\right)$ are slice-open.
	
	If $V_\mathbb{R}=\varnothing$, then
	\begin{equation*}
		V=\Omega\cap\left(\mathbb{C}_I^d\backslash\mathbb{R}^d\right)\qquad\mbox{and}\qquad
		\Omega\backslash V=\Omega\cap\left[\bigcup_{J\in\mathcal{C}\backslash\{\pm I\}}\left(\mathbb{C}_J^d\backslash\mathbb{R}^d\right)\right]
	\end{equation*}
	are slice-open. Since $\Omega$ is slice-connected and nonempty, it follows from
	\begin{equation*}
		\Omega=V\ \bigsqcup\ (\Omega\backslash V)
	\end{equation*}
	that $V=\Omega$. Therefore $\Omega_\mathbb{R}=V_\mathbb{R}=\varnothing$, which is a contradiction.
	
	Otherwise, $V_\mathbb{R}\neq\varnothing$. Fix $x_0\in V_\mathbb{R}$. Since $V$ is the connected component of $\Omega_I$ containing $q$, there is a path $\alpha$ in $V$ from $q$ to $x_0$. Because $\Omega$ is real-connected, we have a path $\beta$ in $\Omega_\mathbb{R}$ from $x_0$ to $x$. It is clear that $\alpha\beta$ is a path on a slice from $q$ to $x$.
\end{proof}

\begin{cor}\label{co-wat}
	Let $\Omega$ be a real-connected slice-domain in $\mathcal{W}_{\mathcal{C}}^d$. Then the following statements hold:
	\begin{enumerate}[label=(\roman*)]
		
		\item\label{it-wat1} $\Omega_I$ is a domain in $\mathbb{C}_I^d$ for each $I\in\mathcal{C}$.
		
		\item Let $p,q\in\Omega$. Then there are two paths $\gamma_1,\gamma_2$ in $\Omega$ such that each of them is on a slice, $\gamma_1(1)=\gamma_2(0)$, and $\gamma_1\gamma_2$ is a slice-path from $p$ to $q$.
		
	\end{enumerate}
\end{cor}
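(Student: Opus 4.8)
The plan is to deduce both statements from Proposition \ref{pr-wat}, using Proposition \ref{pr-sli} to pin paths to a single slice and the earlier observation that a path on a slice is automatically a slice-path. Throughout, recall that since $\Omega$ is slice-open each $\Omega_I=\Omega\cap\mathbb{C}_I^d$ is open in $\mathbb{C}_I^d$, and moreover $\Omega_\mathbb{R}=\Omega_I\cap\mathbb{R}^d$ is open in $\mathbb{R}^d$; being real-connected, $\Omega_\mathbb{R}$ is therefore a domain in $\mathbb{R}^d$ whenever it is nonempty.

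For \ref{it-wat1}, fix $I\in\mathcal{C}$; openness of $\Omega_I$ is free, so only connectedness must be established, and since $\Omega_I$ is open in $\mathbb{C}_I^d\cong\mathbb{R}^{2d}$ it suffices to prove path-connectedness. I split according to Proposition \ref{pr-wat}. If $\Omega_\mathbb{R}=\varnothing$, then Proposition \ref{pr-wat}\ref{it-wat12} gives $\Omega\subset\mathbb{C}_J^d$ for some $J$; for $I\neq\pm J$ Proposition \ref{pr-sli} forces $\Omega_I\subset\mathbb{R}^d\cap\Omega=\varnothing$, which is connected, while for $I=\pm J$ one has $\Omega_I=\Omega$ and I would check that any partition of $\Omega$ into two nonempty sets open in $\mathbb{C}_I^d$ is in fact a partition into slice-open sets (here $\Omega_\mathbb{R}=\varnothing$ makes the intersections with the other slices empty), contradicting slice-connectedness. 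If instead $\Omega_\mathbb{R}\neq\varnothing$, I fix $x_*\in\Omega_\mathbb{R}\subset\Omega_I$ and connect an arbitrary $p\in\Omega_I$ to $x_*$ inside $\Omega_I$: when $p\notin\mathbb{R}^d$, Proposition \ref{pr-wat}\ref{it-wat22} supplies a path on a slice from $p$ to $x_*$, and since this path passes through the non-real point $p\in\mathbb{C}_I^d$, Proposition \ref{pr-sli} pins it to the slice $\mathbb{C}_I^d$, hence into $\Omega\cap\mathbb{C}_I^d=\Omega_I$; when $p\in\Omega_\mathbb{R}$, I simply join $p$ to $x_*$ inside the domain $\Omega_\mathbb{R}\subset\Omega_I$.

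For the second item, I again split on $\Omega_\mathbb{R}$. If $\Omega_\mathbb{R}=\varnothing$, then $\Omega=\Omega_I\subset\mathbb{C}_I^d$ is a domain by \ref{it-wat1}, hence path-connected, and a path from $p$ to $q$ lying on the slice $\mathbb{C}_I^d$ can be cut (or padded with a constant path) into the required $\gamma_1,\gamma_2$. If $\Omega_\mathbb{R}\neq\varnothing$, I choose $x\in\Omega_\mathbb{R}$ and apply Proposition \ref{pr-wat}\ref{it-wat22} to obtain a path on a slice $\gamma_1$ from $p$ to $x$ and a path on a slice from $q$ to $x$, whose reverse is $\gamma_2$ from $x$ to $q$; then $\gamma_1(1)=x=\gamma_2(0)$ and, since each $\gamma_i$ is a slice-path by the earlier proposition and a concatenation of slice-paths is again a slice-path, $\gamma_1\gamma_2$ is a slice-path from $p$ to $q$.

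The main obstacle is the connectedness claim in \ref{it-wat1}: it requires transferring the slice-connectedness of $\Omega$ (a statement in the finer topology $\tau_s$) into ordinary connectedness of a single slice $\Omega_I$, and it requires guaranteeing that the path produced by Proposition \ref{pr-wat}\ref{it-wat22} really lives in $\mathbb{C}_I^d$ rather than escaping to another slice. Proposition \ref{pr-sli}, which says that distinct slices meet only along $\mathbb{R}^d$, is exactly what rules out such escape through any non-real point, and the hypothesis $\Omega_\mathbb{R}=\varnothing$ is what makes the empty-real-part case degenerate nicely.
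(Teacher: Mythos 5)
Your proof is correct and follows exactly the route the paper intends: the paper's own proof of this corollary is just the one-line remark that it follows from Proposition \ref{pr-wat}, and your argument supplies the missing details (using Proposition \ref{pr-sli} to pin the paths of Proposition \ref{pr-wat}\ref{it-wat22} to the slice $\mathbb{C}_I^d$, and handling the degenerate case $\Omega_\mathbb{R}=\varnothing$ consistently with the paper's convention that the empty set is connected). No gaps.
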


\begin{proof}
	This follows directly from Proposition \ref{pr-wat}.
\end{proof}

\begin{prop}
	The topological space $\left(\mathcal{W}_{\mathcal{C}}^d,\tau_s\right)$ is connected, locally path-connected and path-connected.
\end{prop}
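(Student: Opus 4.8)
The plan is to prove path-connectedness directly, to deduce topological connectedness from it for free, and then to read off local path-connectedness from the structural results established just above.

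First I would prove path-connectedness. The key feature of $\mathcal{W}_{\mathcal{C}}^d$ is that every slice $\mathbb{C}_I^d$ contains the common real subspace $\mathbb{R}^d$, and in particular the origin $0$, which lies in $\mathcal{W}_{\mathcal{C}}^d$ since $\mathcal{C}\neq\varnothing$ by Assumption \ref{as-cp}. Given an arbitrary $p\in\mathcal{W}_{\mathcal{C}}^d$, I would pick $I\in\mathcal{C}$ with $p\in\mathbb{C}_I^d$; since $\mathbb{C}_I^d$ is a real vector space, the segment $t\mapsto(1-t)p$ is a Euclidean-continuous path inside $\mathbb{C}_I^d$ joining $p$ to $0$, hence a path on a slice, and therefore a slice-path by the proposition asserting that every path on a slice is a slice-path (which rests on the identity $\tau(\mathbb{C}_I^d)=\tau_s(\mathbb{C}_I^d)$ from \eqref{eq-tmd}). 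For two points $p,q\in\mathcal{W}_{\mathcal{C}}^d$ I would then concatenate the slice-path from $p$ to $0$ with the reverse of the slice-path from $q$ to $0$; since concatenation and reversal of slice-paths are again slice-paths, $p$ and $q$ are joined by a slice-path. This yields path-connectedness, and because every path-connected space is connected, connectedness follows immediately.

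For local path-connectedness I would show that every point has a neighborhood base of slice-path-connected slice-open sets. Fixing $q\in\mathcal{W}_{\mathcal{C}}^d$ and a slice-open $\Omega\ni q$, Proposition \ref{pr-wso} supplies a real-connected slice-domain $U$ with $q\in U\subset\Omega$, and Corollary \ref{co-wat}(ii) guarantees that any two points of $U$ are joined by a slice-path lying in $U$; thus $U$ is slice-path-connected. Since such a $U$ can be found inside every slice-open neighborhood of every point, these real-connected slice-domains form a base of slice-path-connected open sets, which is exactly local path-connectedness.

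I do not expect a serious obstacle: the whole argument is an assembly of results already in hand. The only points requiring care are ensuring that the ad hoc paths used are genuinely continuous for $\tau_s$ rather than merely for $\tau$ (handled once and for all by \eqref{eq-tmd} together with the ``path on a slice is a slice-path'' proposition), and confirming that Corollary \ref{co-wat}(ii) produces a slice-path internal to $U$ rather than one merely contained in the ambient space, which it does by construction.
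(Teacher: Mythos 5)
Your proposal is correct and follows essentially the same route as the paper: local path-connectedness is obtained from Proposition \ref{pr-wso} together with Corollary \ref{co-wat}(ii), path-connectedness from the fact that every slice $\mathbb{C}_I^d$ contains $\mathbb{R}^d$ (so any two points can be joined through a real point), and connectedness is then immediate. Your write-up merely makes explicit the straight-line segments and the appeal to \eqref{eq-tmd} that the paper leaves implicit.
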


\begin{proof}
	It  follows from Proposition \ref{pr-wso} and Corollary \ref{co-wat} (ii) that $\left(\mathcal{W}_{\mathcal{C}}^d,\tau_s\right)$ is locally path-connected. Since $\mathcal{W}_{\mathcal{C}}^d\cap\mathbb{C}_I^d=\mathbb{C}_I^d\supset\mathbb{R}^d$ for each $I\in\mathcal{C}$, $(\mathcal{W}_{\mathcal{C}}^d,\tau_s)$ is path-connected. It implies that $(\mathcal{W}_{\mathcal{C}}^d,\tau_s)$ is also connected.
\end{proof}

\section{Weak slice regular functions}\label{sc-wsr}

As we wrote in the Introduction, the slice regular functions defined in \cite{Gentili2007001} are called weak slice regular functions in this paper. In \cite{Dou2020001}, we generalized these functions to open sets in the slice-topology $\tau_s$ on $\mathbb{H}$. In this section, we define $\mathbb{R}^{2n}$-valued weak slice regular functions on open sets in the slice-topology $\tau_s(\mathcal{W}_\mathcal{C}^d)$, and we prove a splitting lemma and an identity principle. To consider functions with values in a real vector space may seem to be reductive, and in fact in Section \ref{sc-ws} we will show that the study can be generalized to the case of functions with values in a suitable algebra; but on the contrary, it shows that part of the theory, including some powerful tools like the representation formula, in fact depends only on the vector space structure of the set of values not on its algebra structure.

\begin{defn}
	Let $\Omega\in\tau_s(\mathcal{W}_\mathcal{C}^d)$. A function $f:\Omega\rightarrow\mathbb{R}^{2n}$ is called weak slice regular if and only if for each $I\in\mathcal{C}$, $f_I:=f|_{\Omega_I}$ is (left $I$-)holomorphic, i.e. $f_I$ is real differentiable and for each $\ell=1,2,...,d$,
	\begin{equation*}
		\frac{1}{2}\left(\frac{\partial}{\partial x_\ell}+I\frac{\partial}{\partial y_\ell}\right)f_I(x+yI)=0,\qquad\mbox{on}\qquad\Omega_I.
	\end{equation*}
\end{defn}

\begin{exa}
	The monomial
	\begin{equation*}
		(x_1+y_1 I,...,x_d+y_d I)\shortmid\!\longrightarrow (x_1+y_1 I)^{m_1}\cdots(x_d+y_d I)^{m_d} a,
	\end{equation*}
	is a weak slice regular function, where $x_1,...,x_d,y_1,...,y_d\in\mathbb{R}$, $a\in\mathbb{R}^{2n}$, $m_1,...,m_d\in\mathbb{N}$ and $I\in\mathcal{C}$.
\end{exa}

\begin{lem}\label{lm-wsp}
	(Splitting Lemma) Let $\Omega\in\tau_s\left(\mathcal{W}_\mathcal{C}^d\right)$. A function $f:\Omega\rightarrow\mathbb{R}^{2n}$ is weak slice regular if and only if for any $I\in\mathcal{C}$ and $I$-basis $\{\xi_1,....,\xi_n\}$, there are $n$ holomorphic functions $F_1,...,F_n:\Omega_I\rightarrow\mathbb{C}_I$, such that
	\begin{equation*}
		f_I=\sum_{\ell=1}^n(F_\ell\xi_\ell).
	\end{equation*}
\end{lem}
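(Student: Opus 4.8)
The plan is to reduce both implications of the lemma to a single slice-wise equivalence, since for a fixed slice the decomposition $f_I=\sum_m F_m\xi_m$ is forced and the real work is matching the two Cauchy--Riemann conditions. First I would fix $I\in\mathcal{C}$ together with an $I$-basis $\{\xi_1,\dots,\xi_n\}$. By definition of $I$-basis the set $\{\xi_1,\dots,\xi_n,I\xi_1,\dots,I\xi_n\}$ is a real basis of $\mathbb{R}^{2n}$, so every map $f_I\colon\Omega_I\to\mathbb{R}^{2n}$ admits a \emph{unique} expansion $f_I=\sum_{m=1}^n(a_m\xi_m+b_m\,I\xi_m)$ with real coordinate functions $a_m,b_m\colon\Omega_I\to\mathbb{R}$. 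Setting $F_m:=a_m+b_m I\colon\Omega_I\to\mathbb{C}_I$ gives $f_I=\sum_m F_m\xi_m$ and determines the $F_m$ uniquely from $f_I$. Because this change of coordinates is $\mathbb{R}$-linear and invertible, $f_I$ is real differentiable if and only if every $F_m$ is, so it remains only to compare the operators $\tfrac12(\partial_{x_\ell}+I\partial_{y_\ell})$.

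Next I would compute the action of $\tfrac12(\partial_{x_\ell}+I\partial_{y_\ell})$ on $f_I=\sum_m F_m\xi_m$ for a fixed variable index $\ell\in\{1,\dots,d\}$. Differentiating term by term and using that $I$ and each value $\partial F_m/\partial y_\ell$ lie in the commutative subalgebra $\mathbb{C}_I\subset\End(\mathbb{R}^{2n})$, so that $I$ commutes with $F_m$ as endomorphisms, one obtains $(\partial_{x_\ell}+I\partial_{y_\ell})f_I=\sum_m\big[(\partial_{x_\ell}+I\partial_{y_\ell})F_m\big]\xi_m$. The one point that needs care is bookkeeping: the symbol $I$ plays two roles, namely the complex structure acting on values in $\mathbb{R}^{2n}$ and the imaginary unit of $\mathbb{C}_I$, and the identity $I(G\xi_m)=(IG)\xi_m$ for $G\in\mathbb{C}_I$ is exactly what allows these two roles to be interchanged and the $I$ to be pulled inside each summand.

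Finally, writing each coefficient $(\partial_{x_\ell}+I\partial_{y_\ell})F_m=p_m+q_m I\in\mathbb{C}_I$ with $p_m,q_m$ real, the expression above becomes $\sum_m p_m\xi_m+\sum_m q_m\,I\xi_m$. Since $\{\xi_1,\dots,\xi_n,I\xi_1,\dots,I\xi_n\}$ is a basis, this vector vanishes identically on $\Omega_I$ if and only if every $p_m$ and every $q_m$ vanishes, that is, if and only if $(\partial_{x_\ell}+I\partial_{y_\ell})F_m=0$ for all $m$. Ranging over $\ell=1,\dots,d$ yields the desired equivalence: $f_I$ is left $I$-holomorphic if and only if all the $F_m$ are holomorphic. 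The lemma follows in both directions, since if $f$ is weak slice regular then the forced $F_m$ are holomorphic, while conversely the holomorphy of the $F_m$ (which are the only candidates compatible with $f_I=\sum_m F_m\xi_m$) forces $f_I$ to be left $I$-holomorphic for every $I\in\mathcal{C}$. I expect the linear-independence step and the commutativity bookkeeping to be the only places requiring attention; the rest is a routine invertible change of basis.
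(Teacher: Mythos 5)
Your proposal is correct and follows essentially the same route as the paper: expand $f_I$ along the decomposition $\mathbb{R}^{2n}=\mathbb{C}_I\xi_1\oplus\cdots\oplus\mathbb{C}_I\xi_n$, pull the operator $\tfrac12(\partial_{x_\ell}+I\partial_{y_\ell})$ inside the sum using associativity of composition in $\End(\mathbb{R}^{2n})$, and conclude componentwise vanishing from linear independence. Your explicit remark that the $\mathbb{R}$-linear invertible change of coordinates transfers real differentiability between $f_I$ and the $F_m$ is a small point the paper leaves implicit, but the argument is the same.
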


\begin{proof}
	(i) Suppose that $f$ is weak slice regular. Let $I\in\mathcal{C}$ and $\{\xi_1,....,\xi_n\}$ be an $I$-basis. Then
	\begin{equation}\label{eq-mr}
		\mathbb{R}^{2n}=\mathbb{C}_I\xi_1\oplus\cdots\oplus\mathbb{C}_I\xi_n.
	\end{equation}
	Then there are $n$ functions $F_1,...,F_n:\Omega_I\rightarrow\mathbb{C}_I$, such that
	\begin{equation*}
		f_I=\sum_{\ell=1}^n(F_\ell\xi_\ell).
	\end{equation*}
	Since the composition of maps is associative in $\End\left(\mathbb{R}^{2n}\right)$ and $f$ is weak slice regular, we have for any $\imath\in\{1,...,d\}$,
	\begin{equation*}
		\begin{split}
			0=&\frac{1}{2}\left(\frac{\partial}{\partial x_\imath}+I\frac{\partial}{\partial y_\imath}\right)f_I
			=\sum_{\ell=1}^n \frac{1}{2}\left(\frac{\partial}{\partial x_\imath}+I\frac{\partial}{\partial y_\imath}\right)(F_\ell\xi_\ell)
			\\=&\sum_{\ell=1}^n\left[\frac{1}{2}\left(\frac{\partial}{\partial x_\imath}+I\frac{\partial}{\partial y_\imath}\right)F_\ell\right]\xi_\ell .
		\end{split}
	\end{equation*}
	Note that
	\begin{equation*}
		\left[\frac{1}{2}\left(\frac{\partial}{\partial x_\imath}+I\frac{\partial}{\partial y_\imath}\right)F_\ell\right]\xi_\ell\in\mathbb{C}_I\xi_\ell,\qquad\ell=1,...,n .
	\end{equation*}
It follows from \eqref{eq-mr} that for any $\imath\in\{1,...,d\}$ and $\ell\in\{1,...,n\}$,
	\begin{equation}\label{eq-lf}
		\left[\frac{1}{2}\left(\frac{\partial}{\partial x_\imath}+I\frac{\partial}{\partial y_\imath}\right)F_\ell\right]\xi_\ell=0.
	\end{equation}
	Since $\mathbb{R}^{2n}$ is an $n$-dimensional $\mathbb{C}_I$ (complex) vector space, for any $a\in\mathbb{R}^{2n}\backslash\{0\}\cong \mathbb C_I^n\backslash\{0\}$ and $z\in\mathbb{C}_I$, we have $z(a)=0$ if and only if $z=0$. Thus for any $\imath\in \{1,...,d\}$ and $\ell\in\{1,...,n\}$, \eqref{eq-lf} yields
	\begin{equation*}
		\frac{1}{2}\left(\frac{\partial}{\partial x_\imath}+I\frac{\partial}{\partial y_\imath}\right)F_\ell=0.
	\end{equation*}
	Hence $F_1,...,F_n$ are holomorphic.
	
	(ii) Suppose that for any choice of $I\in\mathcal{C}$ and of an $I$-basis $\{\xi_1^I,....,\xi_n^I\}$, there are $n$ holomorphic functions $F_1,...,F_n:\Omega_I\rightarrow\mathbb{C}_I$, such that
	\begin{equation*}
		f_I=\sum_{\ell=1}^n(F_\ell\xi_\ell^I).
	\end{equation*}
	 Then for any $\imath\in \{1,...,d\}$,
	\begin{equation*}
		\begin{split}
			\frac{1}{2}\left(\frac{\partial}{\partial x_\imath}+I\frac{\partial}{\partial y_\imath}\right)f_I
			=\sum_{\ell=1}^n\left[\frac{1}{2}\left(\frac{\partial}{\partial x_\imath}+I\frac{\partial}{\partial y_\imath}\right)F_\ell\right]\xi_\ell^I=0,
		\end{split}
	\end{equation*}
and so $f$ is weak slice regular by definition.
\end{proof}

\begin{lem}\label{lm-wip}
	Let $\Omega$ be a real-connected slice-domain in $\mathcal{W}_\mathcal{C}^d$, and $f,g:\Omega\rightarrow\mathbb{R}^{2n}$ be weak slice regular. Then the following statements holds.
	\begin{enumerate}[label=(\roman*)]
		\item\label{lm-wip1} If $\Omega_\mathbb{R}\neq\varnothing$ and $f,g$ coincide on a non-empty open subset of $\Omega_\mathbb{R}$, then $f=g$ on $\Omega$.
		\item If $f,g$ coincide on a non-empty open subset of $\Omega_I$ for some $I\in\mathcal{C}$, then $f=g$ on $\Omega$.
	\end{enumerate}
\end{lem}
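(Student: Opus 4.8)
The plan is to set $h:=f-g$, which is again weak slice regular because the holomorphicity condition defining weak slice regularity is $\mathbb{R}$-linear, and then to show that $h\equiv 0$ on $\Omega$. Throughout I would rely on Corollary \ref{co-wat}\ref{it-wat1}, which guarantees that $\Omega_I$ is a domain (in particular connected) in $\mathbb{C}_I^d$ for every $I\in\mathcal{C}$, so that the classical identity principle for holomorphic functions of several complex variables is available on each slice $\mathbb{C}_I^d\cong\mathbb{C}^d$.

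For statement \ref{lm-wip1}, I would fix $I\in\mathcal{C}$ together with an $I$-basis $\{\xi_1,\ldots,\xi_n\}$. By the Splitting Lemma (Lemma \ref{lm-wsp}) one may write $h_I=\sum_{\ell=1}^n F_\ell\xi_\ell$ with each $F_\ell:\Omega_I\to\mathbb{C}_I$ holomorphic, and since $\mathbb{R}^{2n}=\mathbb{C}_I\xi_1\oplus\cdots\oplus\mathbb{C}_I\xi_n$ the vanishing of $h_I$ on a set is equivalent to the simultaneous vanishing of all the $F_\ell$ there. As $f$ and $g$ agree on a non-empty open subset $U$ of $\Omega_{\mathbb{R}}\subset\mathbb{R}^d\subset\mathbb{C}_I^d$, each $F_\ell$ vanishes on $U$. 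Here lies the main obstacle: $U$ is only open in $\mathbb{R}^d$, hence of real dimension $d$ and \emph{not} open in the ambient $\mathbb{C}_I^d$, so the identity principle cannot be invoked directly. I would overcome this using the real analyticity of holomorphic functions: expanding $F_\ell$ in a Taylor series about a point $x_0\in U$, its restriction to $\mathbb{R}^d$ is a real-analytic function that vanishes on a real neighborhood of $x_0$, which forces all the Taylor coefficients to vanish and hence $F_\ell\equiv 0$ on an entire polydisc around $x_0$ in $\mathbb{C}_I^d$. The zero set of $F_\ell$ now contains a non-empty (complex-)open set, so the identity principle on the connected domain $\Omega_I$ gives $F_\ell\equiv 0$, and therefore $h_I\equiv 0$ on $\Omega_I$. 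Since $I\in\mathcal{C}$ was arbitrary and $\Omega=\bigcup_{I\in\mathcal{C}}\Omega_I$, I conclude $h\equiv 0$ on $\Omega$.

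For the second statement I would reduce to \ref{lm-wip1}. Now $f$ and $g$ coincide on a non-empty subset of $\Omega_I$ that is open in $\mathbb{C}_I^d$, so $h_I$ is holomorphic and vanishes on a genuine open subset of the domain $\Omega_I$; the several-variable identity principle in $\mathbb{C}_I^d\cong\mathbb{C}^d$ then yields $h_I\equiv 0$ on $\Omega_I$ at once. If $\Omega_{\mathbb{R}}\neq\varnothing$, then $\Omega_{\mathbb{R}}=\Omega_I\cap\mathbb{R}^d$ is a non-empty open subset of $\mathbb{R}^d$ on which $h$ already vanishes, so applying part \ref{lm-wip1} finishes the argument. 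If instead $\Omega_{\mathbb{R}}=\varnothing$, Proposition \ref{pr-wat}\ref{it-wat12} gives $\Omega\subset\mathbb{C}_J^d$ for some $J\in\mathcal{C}$; since the agreement set is a non-empty subset of $\Omega_I$, Proposition \ref{pr-sli} (combined with $\Omega_{\mathbb{R}}=\varnothing$) forces $I=\pm J$, so that $\mathbb{C}_I^d=\mathbb{C}_J^d\supset\Omega$ and hence $\Omega_I=\Omega$. Then $h_I\equiv 0$ already means $h\equiv 0$ on $\Omega$, completing the proof.
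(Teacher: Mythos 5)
Your proposal is correct and follows essentially the same route as the paper: part \ref{lm-wip1} rests on the Splitting Lemma plus the observation that agreement on a real-open set forces the Taylor expansions at a real point to coincide (so that agreement propagates to a $\mathbb{C}_I^d$-open set and then to all of the domain $\Omega_I$ by the identity principle, for every $I$), and part (ii) reduces to part \ref{lm-wip1} via the identity principle on $\Omega_I$ together with Proposition \ref{pr-wat}\ref{it-wat12} when $\Omega_\mathbb{R}=\varnothing$. Your rendering of the Taylor-series step through the scalar components $F_\ell$ and real-analyticity is just a more explicit version of the paper's one-line claim, and your use of Proposition \ref{pr-sli} to identify $\mathbb{C}_I^d$ with $\mathbb{C}_J^d$ fills in a detail the paper leaves implicit.
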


\begin{proof}
	
	(i) Suppose that $f,g$ coincide on a non-empty open subset $U$ of $\Omega_\mathbb{R}$. Let $p\in U$ and $I\in\mathcal{C}$. By Splitting Lemma \ref{lm-wsp}, $f_I,g_I$ have same Taylor series at $p$. Hence there is an open set $V$ in $\mathbb{C}_I^d$ such that $f=g$ on $V$. By Corollary \ref{co-wat} \ref{it-wat1}, $\Omega_I$ is a non-empty domain in $\mathbb{C}_I^d$. Therefore $f=g$ on $\Omega_I$. Since the choice of $I$ is arbitrary, $f=g$ on $\Omega=\cup_{I\in\mathcal{C}}\Omega_I$.
	
	(ii) Suppose that $f,g$ coincide on a non-empty open subset of $\Omega_I$ for some $I\in\mathcal{C}$. By the classical Identity Principle (in several complex variables), $f=g$ on $\Omega_I$. If $\Omega_{\mathbb{R}}=\varnothing$, then by Proposition \ref{pr-wat} \ref{it-wat12}, $\Omega=\Omega_I$ so that $f=g$ on $\Omega$. Otherwise, $\Omega_I\cap\mathbb{R}^d\neq\varnothing$, hence $f=g$ on a non-empty open set $\Omega_\mathbb{R}=\Omega_I\cap\mathbb{R}^d$ in $\Omega_\mathbb{R}$. Therefore $f=g$ on $\Omega$ by \ref{lm-wip1}.
\end{proof}

\begin{thm}\label{tm-ip}
	(Identity Principle) Let $\Omega$ be a slice-domain in $\mathcal{W}_\mathcal{C}^d$ and $f,g:\Omega\rightarrow\mathbb{R}^{2n}$ be weak slice regular. Then the following statements holds.
	\begin{enumerate}[label=(\roman*)]
		\item\label{it-ip1} If $f=g$ on a non-empty open subset $D$ of $\Omega_{\mathbb{R}}$, then $f=g$ on $\Omega$.
		\item\label{it-ip2} If $f=g$ on a non-empty open subset $D$ of $\Omega_I$ for some $I\in\mathcal{C}$, then $f=g$ on $\Omega$.
	\end{enumerate}
\end{thm}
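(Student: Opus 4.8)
The plan is to upgrade Lemma \ref{lm-wip}, which requires the stronger hypothesis that $\Omega$ be \emph{real-connected}, to an arbitrary slice-domain by a connectedness argument carried out in the slice-topology. The bridge between the two settings is Proposition \ref{pr-wso}: every point of a slice-open set lies inside a real-connected slice-domain contained in that set, on which Lemma \ref{lm-wip} is applicable. Concretely, I would introduce the slice-interior of the coincidence set,
\begin{equation*}
	S:=\mathrm{int}_{\tau_s}\{q\in\Omega:f(q)=g(q)\},
\end{equation*}
where the interior is taken in the subspace slice-topology $\tau_s(\Omega)$. By construction $S$ is slice-open in $\Omega$, so it suffices to prove that $S$ is non-empty and slice-closed in $\Omega$; since $\Omega$ is slice-connected, this forces $S=\Omega$, i.e. $f=g$ on $\Omega$.

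For non-emptiness I would fix $q\in D$ and use Proposition \ref{pr-wso} to obtain a real-connected slice-domain $U\subset\Omega$ with $q\in U$. In case \ref{it-ip1} the point $q\in\mathbb{R}^d$ gives $q\in U_{\mathbb{R}}$, so $U_{\mathbb{R}}\neq\varnothing$, and $D\cap U_{\mathbb{R}}$ is a non-empty open subset of $U_{\mathbb{R}}$ on which $f=g$; Lemma \ref{lm-wip}\ref{lm-wip1} then yields $f=g$ on $U$. In case \ref{it-ip2} one has $q\in U_I$, and $D\cap U_I$ is a non-empty open subset of $U_I$ on which $f=g$, so Lemma \ref{lm-wip}(ii) again yields $f=g$ on $U$. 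In either case $U$ is a slice-open set on which $f=g$, whence $U\subset S$ and $S\neq\varnothing$.

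For closedness I would take any $q$ in the closure of $S$ within $(\Omega,\tau_s(\Omega))$ and again apply Proposition \ref{pr-wso} to get a real-connected slice-domain $U\subset\Omega$ containing $q$. Since $U$ is a slice-neighbourhood of $q$, it meets $S$; choose $p\in U\cap S$ with $p\in\mathbb{C}_I^d$ for some $I\in\mathcal{C}$, and let $W\subset\Omega$ be a slice-open set with $p\in W$ and $f=g$ on $W$. Then $(W\cap U)\cap\mathbb{C}_I^d=W_I\cap U_I$ is a non-empty open subset of $U_I$ on which $f=g$, so Lemma \ref{lm-wip}(ii) gives $f=g$ on all of $U$. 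As $U$ is slice-open and $f=g$ on $U$, the point $q$ belongs to the slice-interior of the coincidence set, i.e. $q\in S$. Hence $S$ is slice-closed in $\Omega$, and connectedness of $\Omega$ closes the argument.

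The only genuinely delicate point is that Lemma \ref{lm-wip} is unavailable for a general slice-domain, since such a domain need not be real-connected; the entire argument is arranged precisely to reduce each step to the local real-connected slice-domains furnished by Proposition \ref{pr-wso}, where the identity principle is already known. I expect this localization-plus-connectedness passage to be the crux, while the verifications that $D\cap U_{\mathbb{R}}$, $D\cap U_I$ and $W_I\cap U_I$ are non-empty and open are routine consequences of the definition of $\tau_s$ and of the subspace topology.
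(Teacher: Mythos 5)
Your proposal is correct and follows essentially the same route as the paper: the paper works with the set $E$ of points admitting a slice-open neighbourhood on which $f=g$ (your $S$), establishes non-emptiness and openness of the complement via Proposition \ref{pr-wso} together with Lemma \ref{lm-wip}, and concludes by slice-connectedness of $\Omega$. Your variant of proving that $S$ is slice-closed directly, rather than that $\Omega\setminus E$ is slice-open, is only a cosmetic difference.
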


\begin{proof}
	Let us consider the set
	\begin{equation*}
		E:=\{x\in\Omega:\exists\ V\in\tau_s(\Omega),\ \mbox{s.t.}\ x\in V\ \mbox{and}\ f=g\ \mbox{on}\ V\},
	\end{equation*}
	which is a slice-open set in $\Omega$, by its definition.
		
	According to Proposition \ref{pr-wso}, there is a real-connected slice-domain $U$ in $\mathcal{W}_\mathcal{C}^d$ such that $U\cap D\neq\varnothing$ and $U\subset\Omega$. Since $U$ is slice-open, $U_I$ is open in $\mathbb{C}_I^d$ and $U_\mathbb{R}$ is open in $\mathbb{R}^d$. This fact implies that $U\cap D$ is non-empty and open in $\mathbb{C}_I^d$ (by \ref{it-ip2}) or in $\mathbb{R}^d$ (by \ref{it-ip1}). It follows from Lemma \ref{lm-wip} that $f=g$ on $U$. Hence $U\subset E$ and $E$ is nonempty.

	Let now $q\in\Omega\backslash E$. By Proposition \ref{pr-wso}, there is a real-connected slice-domain $V$ with $q\in V\subset\Omega$. Since $E$ and $V$ are slice-open, so is $E\cap V$.
	We have two cases: if $E\cap V \neq\varnothing$, then $f=g$ on the non-empty slice-open $E\cap V$. By Lemma \ref{lm-wip}, $f=g$ on $V$. It implies that $q\in E$, which is a contradiction.
	
	Otherwise, $E\cap V =\varnothing$ and it follows from $E,V\subset\Omega$ that $ V\subset\Omega\backslash E$. Hence $q\in V$ is a slice-interior point in $\Omega\backslash E$. Therefore $\Omega\backslash E$ is slice-open so that $E$ is closed in $\Omega$. Since $\Omega$ is a slice-connected, we deduce that $E=\Omega$ and the thesis follows.
\end{proof}

\begin{prop}
	(Maximum Modulus Principle) Suppose $\Omega$ be a slice-domain in $\mathcal{W}_\mathcal{C}^d$ and $f:\Omega\rightarrow\mathbb{R}^{2n}$ is a weak slice regular function. Let $|w|:=(w^T w)^{\frac{1}{2}}$, for any $w\in\mathbb{R}^{2n}$. If for some $p\in\Omega$,
	\begin{equation}\label{eq-qo}
	\sup_{q\in\Omega}|f(q)|=|f(p)|
	\end{equation}
	then $f$ is constant, that is $f\equiv f(p)$.
\end{prop}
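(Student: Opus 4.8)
The plan is to reduce the statement to the classical maximum principle for plurisubharmonic functions on a single slice, and then to propagate the conclusion to all of $\Omega$ by means of the Identity Principle. First I would choose $I\in\mathcal{C}$ with $p\in\mathbb{C}_I^d$, which exists since $\Omega\subset\mathcal{W}_\mathcal{C}^d=\bigcup_{I\in\mathcal{C}}\mathbb{C}_I^d$. For every $q\in\Omega_I$ we have $q\in\Omega$, so \eqref{eq-qo} gives $|f_I(q)|\le|f(p)|=|f_I(p)|$; hence $|f_I|$ attains its supremum over $\Omega_I$ at the interior point $p$. Let $V$ be the connected component of the open set $\Omega_I\subset\mathbb{C}_I^d$ containing $p$; it then suffices to prove that $f_I$ is constant on $V$, because $f$ would agree there with the constant (hence weak slice regular) function $f(p)$ on the non-empty open subset $V$ of $\Omega_I$, and Theorem \ref{tm-ip}\ref{it-ip2} would force $f\equiv f(p)$ on all of $\Omega$.

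The heart of the argument is to show that $|f_I|^2$ is plurisubharmonic on $V$, and this is precisely where the issue coming from the norm enters: the chosen complex structure $I$ need not be orthogonal with respect to $|\cdot|$, so $|\cdot|^2$ is in general not a Hermitian form compatible with $I$. Nevertheless, by the Splitting Lemma \ref{lm-wsp} I can write $f_I=\sum_{\ell=1}^n F_\ell\xi_\ell$ with $F_\ell=a_\ell+Ib_\ell:\Omega_I\to\mathbb{C}_I$ holomorphic and $\{\xi_1,\dots,\xi_n\}$ a fixed $I$-basis, so that $f_I=\sum_\ell(a_\ell\xi_\ell+b_\ell\,I\xi_\ell)$. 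Each Euclidean component $f_I^{(k)}$ of $f_I$ (for $k=1,\dots,2n$) is then a real-linear combination, with constant coefficients, of the functions $a_\ell,b_\ell$, which are pluriharmonic because $F_\ell$ is holomorphic; hence each $f_I^{(k)}$ is pluriharmonic and $(f_I^{(k)})^2$ is plurisubharmonic. Therefore $|f_I|^2=\sum_k (f_I^{(k)})^2$ is plurisubharmonic, irrespective of whether $I$ is orthogonal. The crucial point is that it is the sum-of-squares structure of the Euclidean norm, and not any compatibility with $I$, that makes the argument work.

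With plurisubharmonicity in hand, the strong maximum principle for plurisubharmonic functions on the connected open set $V$, together with the fact that $|f_I|^2$ attains its maximum at the interior point $p$, yields that $|f_I|^2$ is constant on $V$. It then remains to pass from \emph{constant modulus} to \emph{constant function}, and here the sum-of-squares structure is used once more: restricting to any complex line and computing the Laplacian gives $\Delta|f_I|^2=2\sum_k|\nabla f_I^{(k)}|^2$, so that $|f_I|^2$ being constant forces every $\nabla f_I^{(k)}$ to vanish along every complex line; hence each $f_I^{(k)}$, and thus $f_I$, is constant on $V$. The Identity Principle \ref{tm-ip} then propagates $f\equiv f(p)$ from $V$ to all of $\Omega$, completing the proof.

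I expect the main obstacle to be the plurisubharmonicity of $|f_I|^2$ in the presence of a non-orthogonal $I$: one must resist the tempting shortcut of replacing $|\cdot|$ by an $I$-compatible inner product such as $\langle u,v\rangle+\langle Iu,Iv\rangle$, because the statement concerns the \emph{fixed} Euclidean norm, and instead exploit directly that each real component of $f_I$ is pluriharmonic. By comparison, the localization of the maximum over $\Omega$ to a maximum over the single component $V$ and the passage from constant modulus to constant function are routine.
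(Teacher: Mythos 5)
Your proposal is correct, but it follows a genuinely different route from the paper. The paper's proof stays entirely at the level of one complex variable: via the Splitting Lemma it applies the mean value property $f(p)=\frac{1}{2\pi}\int_0^{2\pi}f(p_1+re^{I\theta},p_2,\dots,p_d)\,d\theta$ in the first variable, uses the maximality of $|f(p)|$ together with the strict convexity of the Euclidean ball (an average of vectors of norm $\le M$ can have norm $M$ only if all vectors coincide) to conclude $f$ is constant on the circle and hence on the disc, and then iterates variable by variable to get $f$ constant on a polydisc before invoking the Identity Principle. You instead package the several-variables step into the strong maximum principle for plurisubharmonic functions: you verify that each Euclidean component of $f_I$ is pluriharmonic (a constant-coefficient combination of the real and imaginary parts of the holomorphic $F_\ell$), deduce that $|f_I|^2$ is plurisubharmonic, obtain constancy of the modulus on the component $V$, and then use $\Delta|f_I|^2=2\sum_k|\nabla f_I^{(k)}|^2$ to pass from constant modulus to constant function; the conclusion via Theorem \ref{tm-ip} is the same. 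Both arguments exploit the sum-of-squares structure of the Euclidean norm at exactly one place — the paper through strict convexity of the ball, you through the Laplacian identity — which is precisely the point stressed in the Remark following the proposition about $I$-complex norms. Your version is more conceptual and isolates cleanly where the choice of norm matters; the paper's is more elementary and self-contained, avoiding any appeal to plurisubharmonic function theory. Both are complete and correct.
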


\begin{proof}
	Suppose that $p=(p_1,...,p_d)\subset\mathbb{C}_I^d$ and  $\overline{P_{\mathbb{C}_I^d}(p,r)}\subset\Omega_I$ for some $I\in\mathbb{S}$ and $r\in\mathbb{R}_+$, where $P_{\mathbb{C}_I^d}(p,r):=\{(z_1,...,z_d)\in\mathbb{C}_I^n:|z_\ell-p_\ell|<r\}$ is a polydisc in $\mathbb{C}_I^d$, and $\overline{P_{\mathbb{C}_I^d}(p,r)}$ is the closure of $P_{\mathbb{C}_I^d}(p,r)$. The function
	\begin{equation*}
	z_1\rightarrow f(z_1,q_2,...,q_d)
	\end{equation*}
	is holomorphic on the closed ball $\overline{B_{\mathbb{C}_I}(p_1,r)}$. By the Splitting Lemma \ref{lm-wsp} and the mean value theorem for holomorphic functions in one variable,
	\begin{equation}\label{eq-fp}
	f(p_1,p_2,...,p_d)=\frac{1}{2\pi}\int_0^{2\pi}f(p_1+re^{I\theta},p_2,...,p_d)d\theta.
	\end{equation}
	It implies that
	\begin{equation*}
	|f(p)|\le \max_{\theta\in[0,2\pi)}|f(p_1+re^{I\theta},p_2,...,p_d)|.
	\end{equation*}
	By \eqref{eq-qo},
	\begin{equation}\label{eq-fpm}
	|f(p)|= \max_{\theta\in[0,2\pi)}|f(p_1+re^{I\theta},p_2,...,p_d)|.
	\end{equation}
	Since the restriction of $f$ to $\Omega_I$ is continuous, it follows from \eqref{eq-fp} and \eqref{eq-fpm}, that there is $C\in\mathbb{R}^{2n}$ such that
	\begin{equation*}
	C=f(p_1+re^{I\theta},p_2,...,p_d),\qquad\forall\ \theta\in[0,2\pi).
	\end{equation*}
	By the identity principle of holomorphic functions in one variable
	\begin{equation*}
	C=f(z_1,p_2,...,p_d),\qquad\forall\ z_1\in \overline{B_{\mathbb{C}_I}(p_1,r)},
	\end{equation*}
	and hence $f(p)=C$.
	
	For any fixed $z_1\in B_{\mathbb{C}_I}(p_1,r)$, the function
	\begin{equation*}
	z_2\rightarrow f(z_1,z_2,q_3,...,q_d)
	\end{equation*}
	holomorphic on $\overline{B_{\mathbb{C}_I}(p_2,r)}$, again attains its maximum modulus at  $(z_1,z_2,p_3,...,p_d)$, the center of $\overline{B_{\mathbb{C}_I}(p_2,r)}$, and hence is constant on $\overline{B_{\mathbb{C}_I}(p_2,r)}$. Iterating this procedure we obtain that $f(z)=f(p)$ for all $z\in \overline{P_{\mathbb{C}_I^d}(p,r)}$. By the Identity Principle \ref{tm-ip}, $f(z)=f(p)$ for all $z\in\Omega$.
\end{proof}

\begin{rmk} The above proof basically follows from that one of several complex variables. However, that proof does not immediately apply to our case, since not every norm can be used. For example, to consider $\mathbb R^{2n}\cong \mathbb C_I^n$, for a fixed $I\in\mathcal{C}$ and with the norm described below, the classical approach is not suitable to prove the result. Specifically, we say that a norm $|\cdot|_I$ on $\mathbb{R}^{2n}$ is an $I$-complex norm if
	\begin{equation*}
		|z a|_I=|z|_{\mathbb{C}_I}|a|_I,\qquad\forall\ z=x+yI\in\mathbb{C}_I\mbox{ and }a\in\mathbb{R}^{2n},
	\end{equation*}	
	where
	\begin{equation*}
		|z|_{\mathbb{C}_I}:=\sqrt{x^2+y^2}.
	\end{equation*}
	Since $I$ is a complex structure on $\mathbb{R}^{2n}$, then
			\begin{equation*}
				\mathbb{R}^{2n}=(\mathbb{C}_I\theta^I_1,...,\mathbb{C}_I\theta^I_n)\cong\mathbb{C}^{n},
			\end{equation*}
		where $\{\theta_1^I,...,\theta_n^I\}$ is a $I$-basis as in \eqref{eq-ti}. It is clear there is an $I$-complex norm $|\cdot|_I$ on $\mathbb{R}_I$ defined by
		\begin{equation*}
			|a|_I=\left(\sum_{\ell=1}^n (|a_\ell^I|_{\mathbb{C}_I})^2\right)^{\frac{1}{2}}
		\end{equation*}
	for any $a=\sum_{\ell=1}^na_\ell^I\theta_\ell^I$, where $a_\ell^I\in\mathbb{C}_I$.
The classical maximum modulus principle in several complex analysis is proved using the above norm $|\cdot|_I$ in $\mathbb{C}_I^n$.

However, the norm $|\cdot|$ that we used in our proof is not an $I$-complex norm, for some $I\in\mathcal{C}$. In fact, let us take the simplest case, i.e. $n=1$ and let us consider $\vartheta=(1,1)$ and $\theta_1=(1,0)$. Then for each $a\in\mathbb{R}^{2n}$ there is $a_1^\vartheta,a_2^\vartheta\in\mathbb{R}$ such that
\begin{equation*}
	a=a_1^\vartheta \theta_1+a_2^\vartheta\vartheta.
\end{equation*}
Define a real linear operator $I:\mathbb{R}^2\rightarrow\mathbb{R}^2$ by
\begin{equation*}
	I(a)=-a_2^\vartheta \theta_1+a_1^\vartheta \vartheta.
\end{equation*}
Then $I^2=-id$ and so $I$ is a complex structure. Since
\begin{equation*}
	|(1,0)|=1\qquad\mbox{and}\qquad|I(1,0)|=|(1,1)|=
	\sqrt{2}
	\neq 1=|I|_{\mathbb{C}_I} |(1,0)|,
\end{equation*}
it follows that $|\cdot|$ is not an $I$-complex norm. Moreover, in general, one can prove that any other norm
$|\cdot|'$ on $\mathbb{R}^{2n}$ is not a $J$-complex norm, for some $J\in\mathcal{C}$.
\end{rmk}

\section{Extension lemma}\label{sc-pf}

In this section we prove an extension lemma for weak slice regular functions with values in $\mathbb R^{2n}$. As a byproduct, we shall obtain other results of independent interest.

For each $I\in\mathcal{C}$, we define an isomorphism $\Psi_i^I$ by
\begin{equation*}
	\begin{split}
		\Psi_i^I:\quad\mathbb{C}^d\quad &\xlongrightarrow[\hskip1cm]{}\quad \mathbb{C}_I^d,
		\\ x+yi\ &\shortmid\!\xlongrightarrow[\hskip1cm]{}\ x+yI.
	\end{split}
\end{equation*}
(We note that the same isomorphism will be used in Section \ref{sc-ws} where we will work in an algebra $A$ and $\mathcal C$ will be denoted by $\mathcal{S}_A$).
Let $\gamma:[0,1]\rightarrow\mathbb{C}^d$ and $I\in\mathcal{C}$. Define a corresponding path in $\mathbb{C}_I^d$ by
\begin{equation*}
	\gamma^{I}:=\Psi_i^{I}\circ\gamma.
\end{equation*}

We now introduce the following set:
\begin{equation*}
	\mathscr{P}(\mathbb{C}^d):=\{\gamma:[0,1]\rightarrow\mathbb{C}^d,\ \gamma\ \mbox{is a path s.t. }\gamma(0)\in\mathbb{R}^d\};
\end{equation*}
 for any $\Omega\subset\mathcal{W}_\mathcal{C}^d$ we define
\begin{equation*} \mathscr{P}\left(\mathbb{C}^d,\Omega\right):=\left\{\delta\in\mathscr{P}\left(\mathbb{C}^d\right):\exists\ I\in\mathcal{C},\mbox{ s.t. } Ran(\delta^{I})\subset\Omega\right\},
\end{equation*}
and, finally, for an arbitrary, but fixed $\gamma\in\mathscr{P}\left(\mathbb{C}^d\right)$ we define
\begin{equation*}
	\mathcal{C}(\gamma,\Omega):=\left\{I\in\mathcal{C}: Ran(\gamma^{I})\subset\Omega\right\},
\end{equation*}
where $Ran(\cdot)$ is the image of a map.

\begin{lem}\label{lm-wlo}
	Let $\Omega\subset\mathcal{W}_\mathcal{C}^d$, $\gamma\in\mathscr{P}\left(\mathbb{C}^d,\Omega\right)$ and $J=(J_1,...,J_k)\in\left[\mathcal{C}(\gamma,\Omega)\right]^k$. Then there is a domain $U$ in $\mathbb{C}^d$ containing $\gamma([0,1])$ such that
	\begin{equation}\label{eq-pij}
		\Psi_i^{J_\ell}(U)\subset\Omega,\qquad\ell=1,...,k.
	\end{equation}
\end{lem}

\begin{proof}
	Let $\ell\in\{1,...,k\}$. Since $J\in\left[\mathcal{C}(\gamma,\Omega)\right]^k$, we have $\gamma^{J_\ell}\subset\Omega_{J_\ell}$. Hence for each $t\in[0,1]$, there is $r_{t,\ell}\in\mathbb{R}_+$ such that
	\begin{equation*}
		B_{J_\ell}(\gamma^{J_\ell}(t),r_{t,\ell})\subset\Omega,
	\end{equation*}
where $B_{J_\ell}(\gamma^{J_\ell}(t),r_{t,\ell})$ is the ball with center $\gamma^{J_\ell}(t)$ and radius $r_{t,\ell}$ in $\mathbb{C}_{J_\ell}^d$.
	Let us set
	\begin{equation*}
		r_t:=\min_{\ell=1,...,k}\{r_{t,\ell}\}.
	\end{equation*}
	Therefore
	\begin{equation*}
		U:=\bigcup_{t\in[0,1]}B(\gamma(t),r_t)
	\end{equation*}
	is a domain in $\mathbb{C}^d$ containing $\gamma([0,1])$ and satisfying \eqref{eq-pij}, since
	\begin{equation*}
		\begin{split}
			\Psi_i^{J_\ell}\left(U\right)=&\Psi_i^{J_\ell}\left(\bigcup_{t\in[0,1]}B(\gamma(t),r_t)\right)
			=\bigcup_{t\in[0,1]}B_{J_\ell}\left(\gamma^{J_\ell}(t),r_{t}\right)
			\\\subset&\bigcup_{t\in[0,1]}B_{J_\ell}\left(\gamma^{J_\ell}(t),r_{t,\ell}\right)\subset\Omega
		\end{split}
	\end{equation*}
	for all $\ell\in\{1,..,k\}$.
\end{proof}

Let $J=(J_1,...,J_k)\in\mathcal{C}^k$, $D_{J_\ell}$, be as in \eqref{eq-dib} for any $\ell=1,...,k$, and consider
\begin{equation*}
	D_J:=\begin{pmatrix}
		D_{J_1}\\&\ddots\\&&D_{J_k}
	\end{pmatrix},
	\qquad\diag(J):=\begin{pmatrix}
		J_1\\&\ddots\\&&J_k
	\end{pmatrix},
\end{equation*}
\begin{equation*}
	\zeta(J):=\begin{pmatrix}
		1&J_1\\\vdots &\vdots\\1&J_k
	\end{pmatrix}\qquad\mbox{and}\qquad\sigma:=\begin{pmatrix}
		&-1\\1
	\end{pmatrix}.
\end{equation*}

We call
\begin{equation}\label{eq-zpj}
	\zeta^+(J):=[\Im^{-1}_{D_J}\cdot\zeta(J)]^+ \Im^{-1}_{D_J}
\end{equation}
the $J$-slice inverse of $\zeta(J)$, where $[\Im^{-1}_{D_J}\cdot\zeta(J)]^+$ is the Moore-Penrose inverse of $\Im^{-1}_{D_J}\cdot\zeta(J)$ defined by \eqref{eq-jil}.

\begin{prop}\label{pr-i1i}
	Let $I\in\mathcal{C}$ and $J=(J_1,...,J_k)\in\mathcal{C}^k$. Then the following statements hold.
	\begin{enumerate}[label=(\roman*)]
		\item\label{it-i1i} $I(1,I)=-(1,I)\sigma$,
		\item\label{it-i1i2} $\diag(J)\zeta(J)=-\zeta(J)\sigma$,
		\item\label{it-i1i3} $\Im^{-1}_{D_J}\diag(J)(\Im^{-1}_{D_J})^{-1}$ is unitary,
		\item\label{it-i1i4} $I\left[(1,I)\zeta^+(J)\right]=\left[(1,I)\zeta^+(J)\right]\diag(J)$.
	\end{enumerate}
\end{prop}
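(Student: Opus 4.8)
The plan is to dispatch (i) and (ii) by direct computation, to prove (iii) by transporting it through the isomorphism $\Im$, and to assemble (iv) from the first three parts together with the behaviour of the Moore--Penrose inverse under unitary factors. For (i) I would expand both sides as $1\times 2$ rows of operators: the left-hand side is $I(1,I)=(I,I^2)=(I,-1)$, while the right-hand side $-(1,I)\sigma$ is read off from the definition of $\sigma$, and the defining relation $I^2=-1$ of a complex structure makes them agree. Part (ii) is the block version of the same bookkeeping: the $\ell$-th row of $\diag(J)\zeta(J)$ is $(J_\ell,J_\ell^2)=(J_\ell,-1)$, the $\ell$-th row of $-\zeta(J)\sigma$ comes from $-(1,J_\ell)\sigma$, and $J_\ell^2=-1$ closes the gap uniformly in $\ell$. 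Each is a one-line verification.

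For (iii) I would apply $\Im$ and use its multiplicativity together with $\Im_{U^{-1}}=(\Im_U)^{-1}$ to obtain $\Im\big(\Im^{-1}_{D_J}\diag(J)(\Im^{-1}_{D_J})^{-1}\big)=D_J\,\Im(\diag(J))\,D_J^{-1}$. The essential input is \eqref{eq-iii1}, which gives $\Im(J_\ell)=D_{J_\ell}\mathbb{J}_{2n}D_{J_\ell}^{-1}$, so that $\Im(\diag(J))=D_J\,\mathbb{J}\,D_J^{-1}$ with $\mathbb{J}:=\diag(\mathbb{J}_{2n},\ldots,\mathbb{J}_{2n})$. Feeding this back expresses the transformation matrix of the operator in question as a conjugate of $\mathbb{J}$; since $\mathbb{J}_{2n}$, hence $\mathbb{J}$, is orthogonal, the claim reduces to verifying that this conjugate is again orthogonal, whence $K:=\Im^{-1}_{D_J}\diag(J)(\Im^{-1}_{D_J})^{-1}$ is unitary. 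I would record $K$ for use in the final part.

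Part (iv) is where everything comes together. Writing $A:=\Im^{-1}_{D_J}$ and $B:=A\,\zeta(J)$, so that $\zeta^+(J)=B^+A$ by \eqref{eq-zpj}, I would first use (i) to rewrite the left-hand side $I[(1,I)\zeta^+(J)]$ as $-(1,I)\sigma B^+A$. Left-multiplying the identity in (ii) by $A$ and inserting $A^{-1}A$ turns it into the commutation relation $KB=-B\sigma$, with $K$ the unitary operator from (iii). The decisive step is to pass to Moore--Penrose inverses: $\sigma$ is orthogonal (so unitary, since $\sigma^T\sigma=\mathrm{Id}$) and $K$ is unitary, so rewriting $B=-K^{-1}B\sigma$ and invoking Proposition \ref{pr-ekm}\ref{it-ekm2} yields $\sigma B^+=-B^+K$. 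Substituting this into the left-hand side and using $KA=A\diag(J)$ collapses it to $[(1,I)B^+A]\diag(J)=[(1,I)\zeta^+(J)]\diag(J)$, which is the right-hand side.

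The hard part is the Moore--Penrose step in (iv): since the Moore--Penrose inverse is not multiplicative in general, the passage from $KB=-B\sigma$ to $\sigma B^+=-B^+K$ is valid only because both $K$ and $\sigma$ are unitary. This is precisely why (iii) is isolated as a separate claim, and why the orthogonality of $\sigma$ must be noted before Proposition \ref{pr-ekm}\ref{it-ekm2} can be applied. Consequently the technical heart of the whole proposition is the unitarity in (iii), established through the conjugation by $D_J$ and the orthogonality of $\mathbb{J}_{2n}$.
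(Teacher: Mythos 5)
Your route coincides with the paper's at every stage: (i) and (ii) by direct row computation, (iii) by pushing the operator through $\Im$ and invoking \eqref{eq-iii1}, and (iv) by combining (i)--(iii) with Proposition \ref{pr-ekm}\ref{it-ekm2}. Your part (iv) is correct and is essentially the paper's argument in tidier notation: the paper applies Proposition \ref{pr-ekm}\ref{it-ekm2} twice (once to peel off $\sigma$, once to peel off the unitary conjugate of $\diag(J)$), while you package both into the single relation $\sigma B^+=-B^+K$; the bookkeeping with $\sigma^{-1}=-\sigma$ and $K^{-1}=-K$ checks out.

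The gap is in (iii), which you yourself identify as the technical heart. After writing $\Im(K)=D_J\,\Im(\diag(J))\,D_J^{-1}$ and $\Im(\diag(J))=D_J\mathbb{J}D_J^{-1}$, what you are actually left with is $D_J^{2}\,\mathbb{J}\,D_J^{-2}$, and you then say the claim ``reduces to verifying that this conjugate is again orthogonal,'' concluding that $K$ is unitary. That verification cannot be carried out by any general principle: conjugation by a non-orthogonal matrix does not preserve orthogonality (for instance $D\,\mathbb{J}_2\,D^{-1}=\left(\begin{smallmatrix}0&-2\\1/2&0\end{smallmatrix}\right)$ for $D=\diag(2,1)$ and $\mathbb{J}_2=\left(\begin{smallmatrix}0&-1\\1&0\end{smallmatrix}\right)$), and $D_J$ is an arbitrary invertible change-of-basis matrix. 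The content of the paper's proof of (iii) is not that a conjugate of $\mathbb{J}$ is orthogonal but that the conjugation \emph{cancels}: taking the conjugation in the order compatible with \eqref{eq-iii1}, one gets $D_J^{-1}\bigl(D_J\mathbb{J}D_J^{-1}\bigr)D_J=\mathbb{J}$ exactly, so the transformation matrix of the operator is the block-diagonal $\mathbb{J}$ itself, orthogonal on the nose. (The paper's displayed proof has the factors $\Im^{-1}_{D_J}$ and its inverse in the order opposite to what \eqref{eq-iii1} literally requires, but its intended step is this exact cancellation; your version replaces it with an appeal to a false general fact, which is a different and non-repairable move.) A minor further point: in (i) you assert that $(I,-1)$ and $-(1,I)\sigma$ agree, but with $\sigma=\left(\begin{smallmatrix}0&-1\\1&0\end{smallmatrix}\right)$ one computes $-(1,I)\sigma=(-I,1)$; the paper's own proof derives $I(1,I)=(1,I)\sigma$ without the minus sign, so there is a sign typo in the source that is harmless downstream (only unitarity of $\sigma$ and $\sigma^{-1}=\pm\sigma$ are used), but the two sides do not ``agree'' as literally written and you should flag rather than assert the equality.
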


\begin{proof}
	\ref{it-i1i} It is immediate to verify that
	\begin{equation*}
		I(1,I)=(I,-1)=(1,I)\begin{pmatrix}
			&-1\\1
		\end{pmatrix}=(1,I)\sigma.
	\end{equation*}

	\ref{it-i1i2} By \ref{it-i1i},
	\begin{equation*}
		\diag(J)\zeta(J)=\begin{pmatrix}
			J_1(1,J_1)\\\vdots\\J_k(1,J_k)
		\end{pmatrix}=\begin{pmatrix}
		-(1,J_1)\sigma\\\vdots\\-(1,J_k)\sigma
		\end{pmatrix}=-\zeta(J)\sigma.
	\end{equation*}

	\ref{it-i1i3} By \eqref{eq-iii1},
	\begin{equation*}
		\begin{split}
			\Im^{-1}_{D_J}\diag(J)(\Im^{-1}_{D_J})^{-1}=&\begin{pmatrix}
				\Im^{-1}_{D_{J_1}}J_1(\Im^{-1}_{D_{J_1}})^{-1}\\&\ddots\\&&\Im^{-1}_{D_{J_k}}J_k(\Im^{-1}_{D_{J_k}})^{-1}
			\end{pmatrix}
			\\=&
			\begin{pmatrix}
				\Im^{-1}_{\mathbb{J}_{2n}}\\&\ddots\\&&\Im^{-1}_{\mathbb{J}_{2n}}
			\end{pmatrix}.
		\end{split}
	\end{equation*}
	It is clear that $\Im^{-1}_{D_J}\diag(J)(\Im^{-1}_{D_J})^{-1}$ is unitary.
	
	\ref{it-i1i4}  Since $\sigma$ is unitary, it follows from Proposition \ref{pr-ekm} \ref{it-ekm2} that
	\begin{equation}\label{eq-si}
		-\sigma[\Im^{-1}_{D_J}\cdot\zeta(J)]^+=\sigma^{-1}[\Im^{-1}_{D_J}\cdot\zeta(J)]^+=[\Im^{-1}_{D_J}\cdot\zeta(J)\sigma]^+.
	\end{equation}
	By \ref{it-i1i3}, $\Im^{-1}_{D_J}\diag(J)(\Im^{-1}_{D_J})^{-1}$ is unitary. Again according to Proposition \ref{pr-ekm} \ref{it-ekm2},
	\begin{equation}\label{eq-sdj}
		\begin{split}
			\left[\Im^{-1}_{D_J}\diag(J)\zeta(J)\right]^+=&\left[\Im^{-1}_{D_J}\diag(J)(\Im^{-1}_{D_J})^{-1}\Im^{-1}_{D_J}\zeta(J)\right]^+
			\\=&\left[(\Im^{-1}_{D_J})\zeta(J)\right]^+\left[\Im^{-1}_{D_J}\diag(J)(\Im^{-1}_{D_J})^{-1}\right]^{-1}
			\\=&\left[(\Im^{-1}_{D_J})\zeta(J)\right]^+\Im^{-1}_{D_J}[-\diag(J)](\Im^{-1}_{D_J})^{-1}.
		\end{split}
	\end{equation}
We then deduce the following chain of equalities
	\begin{equation*}
		\begin{split}
			&I\left[(1,I)[\Im^{-1}_{D_J}\cdot\zeta(J)]^+ \Im^{-1}_{D_J}\right]=-(1,I)\sigma[\Im^{-1}_{D_J}\cdot\zeta(J)]^+ \Im^{-1}_{D_J}
			\\=&(1,I)[\Im^{-1}_{D_J}\cdot\zeta(J)\sigma]^+\Im^{-1}_{D_J}
			=-(1,I)[\Im^{-1}_{D_J}\diag(J)\zeta(J)]^+\Im^{-1}_{D_J}
			\\=&-(1,I)\left[(\Im^{-1}_{D_J})\zeta(J)\right]^+\Im^{-1}_{D_J}[-\diag(J)](\Im^{-1}_{D_J})^{-1}\Im^{-1}_{D_J}
			\\=&\left[(1,I)\left[(\Im^{-1}_{D_J})\zeta(J)\right]^+\Im^{-1}_{D_J}\right]\diag(J),
		\end{split}
	\end{equation*}
	where the first equality holds by \ref{it-i1i}, the second, third and fourth equalities follow from \eqref{eq-si}, \ref{it-i1i2}, and \eqref{eq-sdj}, respectively. We conclude that \ref{it-i1i4} holds.
\end{proof}

Let $J=(J_1,...,J_k)\in\mathcal{C}^k$, $\Omega\subset\mathcal{W}_\mathcal{C}^d$ and $\gamma\in\mathscr{P}(\mathbb{C}^d,\Omega)$. We define
\begin{equation*} \mathcal{C}_{ker}(J):=\left\{I\in\mathcal{C}:\ker(1,I)\supset\bigcap_{\ell=1}^k\ker(1,J_\ell)\right\},
\end{equation*}
and
\begin{equation*}
	\mathcal{C}(\Omega,\gamma,J):=\mathcal{C}(\Omega,\gamma)\cap\mathcal{C}_{ker}(J),
\end{equation*}
where $\ker(\cdot)$ is the kernel of a map, and $\ker(1,J_\ell)$ stands for $\ker((1,J_\ell))$.

\begin{prop}\label{pr-jjk}
	Let $J=(J_1,...,J_k)\in\mathcal{C}^k$. Then
	\begin{equation}\label{eq-il}
		Ran\left[\zeta^+(J)\zeta(J)-id_{(\mathbb{R}^{2n})^{2\times 1}}\right]\subset \ker[\zeta(J)]=\bigcap_{\ell=1}^k \ker(1,J_\ell).
	\end{equation}
	Moreover, for each $I\in\mathcal{C}_{ker}(J)$,
	\begin{equation}\label{eq-ij}
		(1,I)\zeta^+(J)\zeta(J)=(1,I).
	\end{equation}
\end{prop}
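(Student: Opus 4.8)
The plan is to reduce everything to the first Moore--Penrose identity applied to the composite $M:=\Im^{-1}_{D_J}\cdot\zeta(J)\in\End(\mathbb{R}^{2n})^{k\times 2}$, combined with the invertibility of $\Im^{-1}_{D_J}$. First I would note that $\Im^{-1}_{D_J}$ is invertible: by the preceding proposition each block $D_{J_\ell}$ lies in $GL_{2n}(\mathbb{R})$, so the block-diagonal matrix $D_J$ is invertible, and since $\Im$ is an algebra isomorphism on square operator-matrices, $\Im^{-1}_{D_J}\in\End(\mathbb{R}^{2n})^{k\times k}$ is invertible with $(\Im^{-1}_{D_J})^{-1}M=\zeta(J)$. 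From the definition \eqref{eq-zpj}, $\zeta^+(J)=M^+\Im^{-1}_{D_J}$, so that $\zeta^+(J)\zeta(J)=M^+\Im^{-1}_{D_J}\zeta(J)=M^+M$.

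For \eqref{eq-il} I would establish the stronger identity $\zeta(J)\zeta^+(J)\zeta(J)=\zeta(J)$. Starting from the first Moore--Penrose condition of Proposition \ref{pr-jmp}, namely $MM^+M=M$, I left-multiply by $(\Im^{-1}_{D_J})^{-1}$ and use $(\Im^{-1}_{D_J})^{-1}M=\zeta(J)$ to obtain $\zeta(J)M^+M=\zeta(J)$. Since $\zeta^+(J)\zeta(J)=M^+M$, this reads $\zeta(J)[\zeta^+(J)\zeta(J)-id_{(\mathbb{R}^{2n})^{2\times 1}}]=0$, whence the range of $\zeta^+(J)\zeta(J)-id_{(\mathbb{R}^{2n})^{2\times 1}}$ is contained in $\ker[\zeta(J)]$. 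Notice that only condition \ref{it-jmp1} of the Moore--Penrose inverse is needed here.

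The equality $\ker[\zeta(J)]=\bigcap_{\ell=1}^k\ker(1,J_\ell)$ I would read directly off the block structure of $\zeta(J)$: for $v\in(\mathbb{R}^{2n})^{2\times 1}$ the column $\zeta(J)v$ has $\ell$-th block $(1,J_\ell)v$, so it vanishes exactly when $v\in\ker(1,J_\ell)$ for every $\ell$. This completes \eqref{eq-il}.

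Finally, for \eqref{eq-ij} I would combine the two facts just proved. If $I\in\mathcal{C}_{ker}(J)$ then, by definition, $\ker(1,I)\supset\bigcap_{\ell=1}^k\ker(1,J_\ell)=\ker[\zeta(J)]$, and by \eqref{eq-il} the right-hand side contains the range of $\zeta^+(J)\zeta(J)-id_{(\mathbb{R}^{2n})^{2\times 1}}$. Hence $(1,I)$ annihilates this range, i.e. $(1,I)[\zeta^+(J)\zeta(J)-id_{(\mathbb{R}^{2n})^{2\times 1}}]=0$, which is exactly \eqref{eq-ij}. The only real care needed throughout is the bookkeeping of the non-commutative block products and the correct use of the invertibility of $\Im^{-1}_{D_J}$ to peel it off the Moore--Penrose identity; I do not expect any genuine obstacle beyond this.
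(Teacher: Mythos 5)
Your proposal is correct and follows essentially the same route as the paper: both arguments apply the first Moore--Penrose condition to $\Im^{-1}_{D_J}\cdot\zeta(J)$, cancel the invertible factor $\Im^{-1}_{D_J}$ to get $\zeta(J)\zeta^+(J)\zeta(J)=\zeta(J)$, deduce the range--kernel inclusion, and obtain \eqref{eq-ij} from the kernel containment defining $\mathcal{C}_{ker}(J)$. Your explicit verification that $\ker[\zeta(J)]=\bigcap_{\ell=1}^k\ker(1,J_\ell)$ via the block structure is a detail the paper leaves implicit, but it is the intended reading.
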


\begin{proof}
	(i) By Proposition \ref{pr-jmp} \ref{it-jmp1},
	\begin{equation*}
		[\Im^{-1}_{D_J}\cdot\zeta(J)]=[\Im^{-1}_{D_J}\cdot\zeta(J)][\Im^{-1}_{D_J}\cdot\zeta(J)]^+[\Im^{-1}_{D_J}\cdot\zeta(J)].
	\end{equation*}
	Since $\Im^{-1}_{D_J}$ is invertible, it follows from \eqref{eq-zpj} that
	\begin{equation*}
		\zeta(J)=\zeta(J)\zeta^+(J)\zeta(J).
	\end{equation*}
	Hence
	\begin{equation*}
		\zeta(J)\left[\zeta^+(J)\zeta(J)-id_{(\mathbb{R}^{2n})^{2\times 1}}\right]=0.
	\end{equation*}
	It is clear that \eqref{eq-il} holds by the above equality.
	
	(ii) Let $I\in\mathcal{C}(\Omega,\gamma,J)$. By definition and \eqref{eq-il},
	\begin{equation*}
		\ker(1,I)\supset\bigcap_{\ell=1}^k \ker(1,J_\ell)\supset Ran\left[\zeta^+(J)\zeta(J)-id_{(\mathbb{R}^{2n})^{2\times 1}}\right].
	\end{equation*}
	It implies that
	\begin{equation*}
		(1,I)\zeta^+(J)\zeta(J)-(1,I)=(1,I)\left[\zeta^+(J)\zeta(J)-id_{(\mathbb{R}^{2n})^{2\times 1}}\right]=0.
	\end{equation*}
	Hence equality \eqref{eq-ij} holds.
\end{proof}

\begin{lem}\label{lm-lum}
	Let $U\in\tau(\mathbb{C}^d)$, $I\in\mathcal{C}$ and $J=(J_1,...,J_k)\in\mathcal{C}^k$. If $g_\ell:\Psi_i^{J_\ell}(U)\rightarrow\mathbb{R}^{2n}$, $\ell=1,...,k$ are holomorphic, then the function $g[I]:\Psi_i^I(U)\rightarrow\mathbb{R}^{2n}$ defined by
	\begin{equation}\label{eq-gix}
		g[I](x+yI)=(1,I)
		\zeta^+(J)
		g(x+yJ),\qquad\forall\ x+yi\in U,
	\end{equation}
	where
	\begin{equation}\label{eq-gvk}
		g(x+yJ)=\begin{pmatrix}
			g_1(x+yJ_1)\\\vdots\\g_k(x+yJ_k)
		\end{pmatrix}
	\end{equation}
	is holomorphic.

	Moreover, if $U_\mathbb{R}:=U\cap\mathbb{R}^d\neq\varnothing$, $g_1=\cdots=g_k$ on $U_\mathbb{R}$ and $I\in\mathcal{C}_{ker}(J)$, then
	\begin{equation}\label{eq-gg}
		g[I]=g_1=\cdots=g_k\qquad\mbox{on}\qquad U_\mathbb{R}.
	\end{equation}
\end{lem}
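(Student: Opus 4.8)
The plan is to verify the defining holomorphy condition for $g[I]$ by a direct differentiation, reducing the required cancellation to Proposition \ref{pr-i1i} \ref{it-i1i4}, and then to evaluate $g[I]$ on $U_\mathbb{R}$ using the identity \eqref{eq-ij} of Proposition \ref{pr-jjk}. First I would note that the row $(1,I)\zeta^+(J)\in\End(\mathbb{R}^{2n})^{1\times k}$ has constant (i.e.\ $(x,y)$-independent) entries, so it commutes with the operators $\partial/\partial x_\imath$ and $\partial/\partial y_\imath$; moreover $g[I]$ is a fixed $\End(\mathbb{R}^{2n})$-linear combination of the smooth functions $g_\ell$, hence real differentiable. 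Since each $g_\ell$ is holomorphic on $\Psi_i^{J_\ell}(U)\subset\mathbb{C}_{J_\ell}^d$, it satisfies $\partial_{x_\imath}g_\ell=-J_\ell\,\partial_{y_\imath}g_\ell$ for every $\imath\in\{1,\dots,d\}$; assembling these into the column $g(x+yJ)$ of \eqref{eq-gvk} gives the compact relation $\partial_{x_\imath}g(x+yJ)=-\diag(J)\,\partial_{y_\imath}g(x+yJ)$.

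Using this relation, for each $\imath$ I would compute
\[
\tfrac12\Bigl(\tfrac{\partial}{\partial x_\imath}+I\tfrac{\partial}{\partial y_\imath}\Bigr)g[I]
=\tfrac12\bigl[\,I\,(1,I)\zeta^+(J)-(1,I)\zeta^+(J)\diag(J)\,\bigr]\,\partial_{y_\imath}g(x+yJ).
\]
The bracket vanishes precisely by Proposition \ref{pr-i1i} \ref{it-i1i4}, which states $I\,[(1,I)\zeta^+(J)]=[(1,I)\zeta^+(J)]\diag(J)$. Hence $\tfrac12(\partial_{x_\imath}+I\partial_{y_\imath})g[I]=0$ for all $\imath$, and combined with the real differentiability noted above, $g[I]$ is holomorphic.

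For the second assertion I would restrict to $U_\mathbb{R}$, where $y=0$ and thus $x+yJ_\ell=x$ and $x+yI=x$. Writing $h:=g_1=\cdots=g_k$ on $U_\mathbb{R}$, the column of \eqref{eq-gvk} becomes $g(x)=(h(x),\dots,h(x))^T=\zeta(J)\binom{h(x)}{0}$, since $\zeta(J)\binom{1}{0}$ is exactly the column of $k$ copies of $\id$. Therefore $g[I](x)=(1,I)\zeta^+(J)\zeta(J)\binom{h(x)}{0}$, and because $I\in\mathcal{C}_{ker}(J)$, equation \eqref{eq-ij} of Proposition \ref{pr-jjk} gives $(1,I)\zeta^+(J)\zeta(J)=(1,I)$, so $g[I](x)=(1,I)\binom{h(x)}{0}=h(x)$, which is \eqref{eq-gg}.

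Everything is routine once the two auxiliary propositions are invoked; the only delicate point is the bookkeeping that the functions $g_\ell$ live on distinct slices $\mathbb{C}_{J_\ell}^d$ yet are parametrized by a common real pair $(x,y)\in U$, so that the chain rule legitimately delivers $\partial_{x_\imath}g(x+yJ)=-\diag(J)\,\partial_{y_\imath}g(x+yJ)$. I expect the holomorphy verification — and in particular recognizing that the resulting algebraic obstruction is exactly Proposition \ref{pr-i1i}\,\ref{it-i1i4} — to be the crux, with the real-axis identity being a short consequence of \eqref{eq-ij}.
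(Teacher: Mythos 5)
Your proof is correct and follows essentially the same route as the paper's: holomorphy of $g[I]$ is reduced to the intertwining identity of Proposition \ref{pr-i1i} \ref{it-i1i4} together with the componentwise Cauchy--Riemann relations for the $g_\ell$ (you merely substitute $\partial_{x_\imath}g=-\diag(J)\partial_{y_\imath}g$ before invoking the identity, whereas the paper invokes it first — a cosmetic reordering), and the real-axis statement is obtained exactly as in the paper by writing the constant column as $\zeta(J)\binom{g_1(x)}{0}$ and applying \eqref{eq-ij}.
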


\begin{proof}
	(i) By Proposition \ref{pr-i1i} \ref{it-i1i4}, for each $\ell\in\{1,...,d\}$ and $x+yi\in U$,
	\begin{equation*}
		\begin{split}
			&\frac{1}{2}\left(\frac{\partial}{\partial x_\ell}+I\frac{\partial}{\partial y_\ell}\right)g[I](x+yI)
			\\=&\frac{1}{2}\left(\frac{\partial}{\partial x_\ell}+I\frac{\partial}{\partial y_\ell}\right)(1,I)\zeta^+(J)g(x+yJ)
			\\=&(1,I)\zeta^+(J)\begin{pmatrix}
				\frac{1}{2}\left(\frac{\partial}{\partial x_\ell}+J_1\frac{\partial}{\partial y_\ell}\right)
				\\&\ddots
				\\&&\frac{1}{2}\left(\frac{\partial}{\partial x_\ell}+J_k\frac{\partial}{\partial y_\ell}\right)
			\end{pmatrix}\begin{pmatrix}
				g(x+yJ_1)\\\vdots\\g(x+yJ_k)
			\end{pmatrix}
			\\=&(1,I)\zeta^+(J)\begin{pmatrix}
				\frac{1}{2}\left(\frac{\partial}{\partial x_\ell}+J_1\frac{\partial}{\partial y_\ell}\right)g(x+yJ_1)\\\vdots\\\frac{1}{2}\left(\frac{\partial}{\partial x_\ell}+J_k\frac{\partial}{\partial y_\ell}\right)g(x+yJ_k)
			\end{pmatrix}=0.
		\end{split}
	\end{equation*}
	Hence $g[I]$ is holomorphic.
	
	(ii) Suppose that $U_\mathbb{R}\neq\varnothing$, $g_1=\cdots=g_k$ on $U_\mathbb{R}$ and $I\in\mathcal{C}_{ker}(J)$. Then
	\begin{equation*}
		\begin{pmatrix}
			g_1(x)\\\vdots\\ g_k(x)
		\end{pmatrix}=\begin{pmatrix}
			1&J_1\\\vdots&\vdots\\1&J_k
		\end{pmatrix}\begin{pmatrix}g_1(x)\\0\end{pmatrix}
		=\zeta(J)\begin{pmatrix}g_1(x)\\0\end{pmatrix}.
	\end{equation*}
	On the other hand, by \eqref{eq-ij}, we have
	\begin{equation*}
		(1,I)\zeta^+(J)\zeta(J)=(1,I).
	\end{equation*}
	Hence for each $x\in U_\mathbb{R}$,
	\begin{equation*}
		\begin{split}
			g[I](x)=&(1,I)\zeta^+(J)
			\begin{pmatrix}
				g_1(x)\\\vdots\\ g_k(x)
			\end{pmatrix}
			=(1,I)\zeta^+(J)\zeta(J)\begin{pmatrix}g_1(x)\\0\end{pmatrix}
			\\=&(1,I)\begin{pmatrix}g_1(x)\\0\end{pmatrix}=g_1(x).
		\end{split}
	\end{equation*}
	It follows that \eqref{eq-gg} holds.
\end{proof}

\section{Path-representation formula}\label{sc-prf}

In this section, we prove a weak path-representation formula for weak slice regular functions. We also define path-slice functions and show that weak slice regular functions are path-slice.

\begin{thm}\label{th-oi}
	(Path-representation Formula) Let $\Omega$ be a slice-open set in $\mathcal{W}_\mathcal{C}^d$, $\gamma\in\mathscr{P}(\mathbb{C}^d,\Omega)$, $J=(J_1,J_2,...,J_k)\in\left[\mathcal{C}(\gamma,\Omega)\right]^k$ and $I\in\mathcal{C}(\gamma,\Omega,J)$.
    If $f:\Omega\rightarrow\mathbb{R}^{2n}$ is weak slice regular, then
	\begin{equation}\label{eq-ki}
		f\circ\gamma^I=(1,I)\zeta^+(J)(f\circ\gamma^J),
	\end{equation}
	where
	\begin{equation}\label{eq-fgj}
		f\circ\gamma^J:=\begin{pmatrix}
			f\circ\gamma^{J_1}\\\vdots\\f\circ\gamma^{J_k}
		\end{pmatrix}.
	\end{equation}
\end{thm}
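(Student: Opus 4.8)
The plan is to reduce the claimed identity of $\mathbb{R}^{2n}$-valued paths to an equality of two holomorphic functions on a single slice, and then to close the gap by the identity principle. To set up, observe that $I\in\mathcal{C}(\gamma,\Omega,J)=\mathcal{C}(\gamma,\Omega)\cap\mathcal{C}_{ker}(J)$, so in particular $I\in\mathcal{C}(\gamma,\Omega)$; hence the enlarged tuple $(J_1,\dots,J_k,I)$ lies in $\left[\mathcal{C}(\gamma,\Omega)\right]^{k+1}$, and Lemma \ref{lm-wlo} produces a domain $U\subset\mathbb{C}^d$ containing $\gamma([0,1])$ with $\Psi_i^{J_\ell}(U)\subset\Omega$ for each $\ell$ and also $\Psi_i^I(U)\subset\Omega$. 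Since $\gamma(0)\in\mathbb{R}^d$ by definition of $\mathscr{P}(\mathbb{C}^d)$, the set $U_\mathbb{R}=U\cap\mathbb{R}^d$ is a nonempty open subset of $\mathbb{R}^d$.

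First I would set $g_\ell:=f|_{\Psi_i^{J_\ell}(U)}$ for $\ell=1,\dots,k$; each $g_\ell$ is holomorphic because $f$ is weak slice regular and $\Psi_i^{J_\ell}(U)\subset\Omega_{J_\ell}$. Lemma \ref{lm-lum} then applies and yields the holomorphic function $g[I]$ on $\Psi_i^I(U)$ defined by \eqref{eq-gix}. Evaluating $g[I]$ along $\gamma^I$ and writing $\gamma(t)=x+yi$, one has $x+yJ_\ell=\gamma^{J_\ell}(t)$, and $g_\ell(\gamma^{J_\ell}(t))=f(\gamma^{J_\ell}(t))$ by the choice of $g_\ell$; therefore $g[I]\circ\gamma^I=(1,I)\zeta^+(J)(f\circ\gamma^J)$, which is exactly the right-hand side of \eqref{eq-ki}. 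Consequently the theorem becomes equivalent to the equality $f\circ\gamma^I=g[I]\circ\gamma^I$, and it suffices to prove the stronger slice-wise statement $f|_{\Psi_i^I(U)}=g[I]$ on all of $\Psi_i^I(U)$.

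To obtain this slice-wise equality I would compare the two holomorphic functions $f|_{\Psi_i^I(U)}$ and $g[I]$ on the domain $\Psi_i^I(U)\subset\mathbb{C}_I^d$. On the real part $U_\mathbb{R}$ all restrictions coincide, $g_1=\dots=g_k=f$; since $I\in\mathcal{C}_{ker}(J)$, the ``moreover'' part of Lemma \ref{lm-lum}, which rests on the identity $(1,I)\zeta^+(J)\zeta(J)=(1,I)$ from Proposition \ref{pr-jjk} (itself a consequence of the Moore-Penrose relations in Proposition \ref{pr-i1i}), gives $g[I]=f$ on $U_\mathbb{R}$. Now $f|_{\Psi_i^I(U)}$ and $g[I]$ are holomorphic on the connected open set $\Psi_i^I(U)$ and agree on the nonempty set $U_\mathbb{R}$, which is open in $\mathbb{R}^d$. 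Pulling back by the biholomorphism $\Psi_i^I$ and using the Splitting Lemma \ref{lm-wsp} to express the difference in an $I$-basis as $\mathbb{C}_I$-valued components that are classically holomorphic on $U$ and vanish on $U_\mathbb{R}$, I invoke the identity principle for holomorphic functions in several complex variables that vanish on a nonempty open subset of $\mathbb{R}^d$ to conclude $f|_{\Psi_i^I(U)}=g[I]$. Restricting to $\gamma^I([0,1])\subset\Psi_i^I(U)$ gives \eqref{eq-ki}.

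The main obstacle is precisely this last propagation step. Directly, the two sides of \eqref{eq-ki} are forced to coincide only at the real base point $\gamma(0)$, and the entire content of the proof lies in pushing that single-point agreement out to the whole slice $\Psi_i^I(U)$ via holomorphicity. Everything else is bookkeeping assembled from Lemmas \ref{lm-wlo} and \ref{lm-lum}, whose algebraic core, the Moore-Penrose behaviour of $\zeta^+(J)$ encoded in Propositions \ref{pr-i1i} and \ref{pr-jjk}, has already been isolated.
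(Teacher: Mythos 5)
Your proposal is correct and follows essentially the same route as the paper's proof: apply Lemma \ref{lm-wlo} to the tuple including $I$ to get a common domain $U$, build $g[I]$ via Lemma \ref{lm-lum}, use the ``moreover'' clause (resting on $(1,I)\zeta^+(J)\zeta(J)=(1,I)$) to match $f_I$ on the nonempty real trace $U_\mathbb{R}$, and propagate the equality to all of $\Psi_i^I(U)$ by the identity principle before restricting to the path. The only cosmetic difference is that you spell out the reduction to scalar holomorphic components via the Splitting Lemma where the paper simply invokes ``Taylor series expansion and the identity principle.''
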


\begin{proof}
	By Lemma \ref{lm-wlo}, there is a domain $U$ in $\mathbb{C}^d$ containing $\gamma([0,1])$ such that
	\begin{equation*}
		\Psi_i^{K}(U)\subset\Omega,\qquad K=I,J_1,...J_k.
	\end{equation*}
	Since $f$ is weak slice regular, $f|_{\Psi_i^{J_\ell}(U)}$, $\ell=1,...,k$ are holomorphic.
	
	Note that $I\in\mathcal{C}_{ker}(J)$. By Lemma \ref{lm-lum}, there is a holomorphic function $g:\Psi_i^{I}(U)\rightarrow\mathbb{R}^{2n}$, such that for each $x+yi\in U$,
	\begin{equation*}
		g(x+yI)=(1,I)\zeta^+(J)f(x+yJ),
	\end{equation*}
	and
	\begin{equation*}
		g=f|_{\Psi_i^{J_1}(U)}=f=f_I,\qquad\mbox{on}\qquad U_\mathbb{R},
	\end{equation*}
	where
	\begin{equation*}
		f(x+yJ)=\begin{pmatrix}
			f(x+yJ_1)\\\vdots\\f(x+yJ_k)
		\end{pmatrix}.
	\end{equation*}
	Since $g,f_I$ are holomorphic on a domain $\Psi_i^{I}(U)$ in $\mathbb{C}_I^d$ and $g=f_I$ on $U_\mathbb{R}\subset\Psi_i^{I}(U)$, it follows from the Taylor series expansion and the identity principle in complex analysis that
	\begin{equation*}
		g=f_I,\qquad\mbox{on}\qquad \Psi_i^{I}(U).
	\end{equation*}

	Let $t\in[0,1]$ and consider
	\begin{equation*}
		\gamma(t)=x_t+y_t i,\qquad\mbox{for some }x_t,y_t\in\mathbb{R}^d.
	\end{equation*}
	Then
	\begin{equation*}
		\gamma^K(t)=x_t+y_t K,\qquad K=I,J_1,...,J_k.
	\end{equation*}
	It is clear that for each $t\in[0,1]$,
	\begin{equation*}
		\begin{split}
			&f\circ\gamma^I(t)=f(x_t+y_t I)=g(x_t+y_t I)
			\\=&(1,I)\zeta^+(J)\begin{pmatrix}
				f(x_t+y_tJ_1)\\ \vdots\\f(x_t+y_tJ_k)
			\end{pmatrix}
			=(1,I)\zeta^+(J)\begin{pmatrix}
				f\circ\gamma^{J_1}(t)\\ \vdots\\f\circ\gamma^{J_k}(t)
			\end{pmatrix}.
		\end{split}
	\end{equation*}
	It implies that \eqref{eq-ki} holds.
\end{proof}

\begin{defn}
	Let $\Omega\subset\mathcal{W}_\mathcal{C}^d$, $\gamma\in\mathscr{P}(\mathbb{C}^d,\Omega)$ and $J=(J_1,...,J_k)\in \left[\mathcal{C}(\Omega,\gamma)\right]^k$. We say that $J$ is a slice-solution of $\mathcal{C}(\Omega,\gamma)$ if
	\begin{equation*}
		\mathcal{C}(\Omega,\gamma)=\mathcal{C}(\Omega,\gamma,J).
	\end{equation*}
\end{defn}

\begin{exa}\label{ex-ij}
	Let $\Omega\subset\mathcal{W}_\mathcal{C}^d$ and $\gamma\in\mathscr{P}(\mathbb{C}^d,\Omega)$. If $\pm I\in\mathcal{C}(\Omega,\gamma)$, then $(I,-I)$ is a slice-solution of $\mathcal{C}(\gamma,\Omega)$. This is because
	\begin{equation*}
		\begin{pmatrix}
			1&I\\1&-I
		\end{pmatrix}^{-1}=
		\frac{1}{2}\begin{pmatrix}
			1&1\\-I&I
		\end{pmatrix},
	\end{equation*}
	and
	\begin{equation*}
		\ker(1,I)\cap\ker(1,-I)=\ker\left[\begin{pmatrix}
			1&I\\1&-I
		\end{pmatrix}\right]=0.
	\end{equation*}

	Similarly, if $J_1,J_2\in\mathcal{C}(\Omega,\gamma)$ with $J_1-J_2$ being invertible, then $(J_1,J_2)$ is a slice-solution of $\mathcal{C}(\gamma,\Omega)$ (cf. \eqref{eq-1ji} below).
\end{exa}

\begin{prop}\label{pr-lss}
	Let $\Omega\subset\mathcal{W}_\mathcal{C}^d$ and $\gamma\in\mathscr{P}(\mathbb{C}^d,\Omega)$. Then there is at least one slice-solution $J\in \left[\mathcal{C}(\Omega,\gamma)\right]^k$ of $\mathcal{C}(\Omega,\gamma)$ for some $k\in\mathbb{N}_+$.
\end{prop}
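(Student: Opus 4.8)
The plan is to reduce the statement to a finite-dimensional linear-algebra fact about the subspaces $\ker(1,I)$ inside $(\mathbb{R}^{2n})^{2\times 1}$. First I would unwind the definitions. Since $\mathcal{C}(\Omega,\gamma,J)=\mathcal{C}(\Omega,\gamma)\cap\mathcal{C}_{ker}(J)$ is always a subset of $\mathcal{C}(\Omega,\gamma)$, requiring that $J$ be a slice-solution, i.e. $\mathcal{C}(\Omega,\gamma)=\mathcal{C}(\Omega,\gamma,J)$, is equivalent to the single inclusion $\mathcal{C}(\Omega,\gamma)\subset\mathcal{C}_{ker}(J)$. By the very definition of $\mathcal{C}_{ker}(J)$, this says precisely that for every $I\in\mathcal{C}(\Omega,\gamma)$ one has $\ker(1,I)\supset\bigcap_{\ell=1}^k\ker(1,J_\ell)$. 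So the goal becomes: exhibit finitely many $J_1,\dots,J_k\in\mathcal{C}(\Omega,\gamma)$ whose associated kernels have an intersection contained in $\ker(1,I)$ for \emph{every} $I\in\mathcal{C}(\Omega,\gamma)$.

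Next I would record that $\mathcal{C}(\Omega,\gamma)$ is non-empty: by the definition of $\mathscr{P}(\mathbb{C}^d,\Omega)$, from $\gamma\in\mathscr{P}(\mathbb{C}^d,\Omega)$ there is some $I\in\mathcal{C}$ with $Ran(\gamma^I)\subset\Omega$, that is $I\in\mathcal{C}(\Omega,\gamma)$. Each $\ker(1,I)$ is a linear subspace of the finite-dimensional space $(\mathbb{R}^{2n})^{2\times 1}$. I would then consider the family of all finite intersections $\bigcap_{I\in F}\ker(1,I)$, where $F$ ranges over the non-empty finite subsets of $\mathcal{C}(\Omega,\gamma)$. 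Their dimensions form a non-empty set of non-negative integers, hence attain a minimum, realized by some finite subset $F_0=\{J_1,\dots,J_k\}$ with $\dim\bigcap_{\ell=1}^k\ker(1,J_\ell)$ minimal; note $k\in\mathbb{N}_+$ precisely because $\mathcal{C}(\Omega,\gamma)\neq\varnothing$.

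Finally I would verify that this $F_0$ does the job. Fix an arbitrary $I'\in\mathcal{C}(\Omega,\gamma)$. The subspace $\bigcap_{\ell=1}^k\ker(1,J_\ell)\cap\ker(1,I')$ is contained in $\bigcap_{\ell=1}^k\ker(1,J_\ell)$, yet it is itself the intersection associated with the finite set $F_0\cup\{I'\}$, so by the minimality of the dimension it cannot have strictly smaller dimension; being a subspace of equal dimension, it must coincide with $\bigcap_{\ell=1}^k\ker(1,J_\ell)$. Hence $\bigcap_{\ell=1}^k\ker(1,J_\ell)\subset\ker(1,I')$. As $I'$ was arbitrary, the tuple $J=(J_1,\dots,J_k)$ satisfies $\mathcal{C}(\Omega,\gamma)\subset\mathcal{C}_{ker}(J)$, which by the first step is exactly the slice-solution condition.

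I do not expect a genuine obstacle here; the only point requiring care is the passage from the (possibly infinite) intersection $\bigcap_{I\in\mathcal{C}(\Omega,\gamma)}\ker(1,I)$ to a finite one, which the dimension-minimization argument handles without needing to order or enumerate $\mathcal{C}(\Omega,\gamma)$. The remaining verifications—that $\mathcal{C}(\Omega,\gamma)$ is non-empty so $k$ is a positive integer, and that the minimizing intersection is genuinely a subspace of each smaller candidate—are routine.
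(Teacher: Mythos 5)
Your proof is correct and follows essentially the same route as the paper: both arguments exploit finite-dimensionality of $(\mathbb{R}^{2n})^{2\times 1}$ to select finitely many $J_\ell\in\mathcal{C}(\Omega,\gamma)$ whose kernel intersection has minimal dimension, and then use that minimality to force $\bigcap_{\ell=1}^k\ker(1,J_\ell)\subset\ker(1,I)$ for every $I\in\mathcal{C}(\Omega,\gamma)$. The only (cosmetic) difference is that the paper reaches the minimizing tuple by an iterative descent on the quantities $m_\ell$, whereas you minimize directly over all finite subsets; the underlying idea is identical.
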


\begin{proof}
	Let $\ell\in\mathbb{N}_+$. Define
	\begin{equation*}
		m_{\ell}:=\min\left\{\dim_{\mathbb{R}}\left(\bigcap_{\jmath=1}^\ell \ker(1,K_\jmath)\right):K_\jmath\in\mathcal{C}(\Omega,\gamma)\right\},
	\end{equation*}
	and
	\begin{equation*}
		m_{0}:=\dim_{\mathbb{R}}\left(\bigcap_{K\in\mathcal{C}(\Omega,\gamma)} \ker(1,K_\jmath)\right).
	\end{equation*}
	It is clear that $m_0\le m_{\ell}$ for each $\ell\in\mathbb{N}_+$.
	
	Suppose that $I=(I_1,...,I_\ell)\in\left[\mathcal{C}(\Omega,\gamma)\right]^\ell$ such that
	\begin{equation*}
		m_{\ell}=\dim_{\mathbb{R}}\left(\bigcap_{\jmath=1}^\ell \ker(1,I_\jmath)\right).
	\end{equation*}
	If $m_\ell>m_0$, then there is $I_{\ell+1}\in\mathcal{C}(\Omega,\gamma)$ such that $\ker(1,I_{\ell+1})\nsupseteq\bigcap_{\jmath=1}^\ell \ker(1,I_\jmath)$. Therefore,
	\begin{equation*}
		m_{\ell+1}\le\dim_{\mathbb{R}}\left(\bigcap_{\jmath=1}^{\ell+1} \ker(1,I_\jmath)\right)
		\le\dim_{\mathbb{R}}\left(\bigcap_{\jmath=1}^\ell \ker(1,I_\jmath)\right)-1
		\le m_\ell-1.
	\end{equation*}
	It implies that there is $k\in\mathbb{N}_+$ such that $m_k=m_0$. By definition there is $J=(J_1,...,J_k)\in \left[\mathcal{C}(\Omega,\gamma)\right]^k$, such that $m_0=m_k=\dim_{\mathbb{R}}\left(\bigcap_{\jmath=1}^k \ker(1,J_\jmath)\right)$. It follows that for each $I\in\mathcal{C}(\Omega,\gamma)$,
	\begin{equation*}
		\bigcap_{\jmath=1}^K \ker(1,J_\jmath)=\bigcap_{K\in\mathcal{C}(\Omega,\gamma)}\ker(1,K_\jmath)\subset\ker(1,I).
	\end{equation*}
	By definition, for each $I\in\mathcal{C}(\Omega,\gamma)$, we have $I\in\mathcal{C}(\Omega,\gamma,J)$. Hence $\mathcal{C}(\Omega,\gamma,J)=\mathcal{C}(\Omega,\gamma)$ and $J$ is a slice-solution of $\mathcal{C}(\Omega,\gamma)$.
\end{proof}

Now we generalize the notion of path-slice functions to $\mathcal{W}_{\mathcal{C}}^d$. We also show that weak slice regular functions are weak path-slice.

\begin{defn}\label{df-wlos}
	Let $\Omega\subset\mathcal{W}_{\mathcal{C}}^d$. A function $f:\Omega\rightarrow\mathbb{R}^{2n}$ is called path-slice if there is a function $F:\mathscr{P}(\mathbb{C}^d,\Omega)\rightarrow (\mathbb{R}^{2n})^{2\times 1}$ such that
	\begin{equation}\label{eq-fcg}
		f\circ\gamma^{I}(1)=(1,I)F(\gamma),
	\end{equation}
	for any $\gamma\in\mathscr{P}(\mathbb{C}^d,\Omega)$ and $I\in\mathcal{C}(\gamma,\Omega)$, where $f\circ\gamma^J$ is defined by \eqref{eq-fgj}.
	
	We call $F$ a path-slice stem function of $f$.
\end{defn}

\begin{defn}
	Let $\Omega\subset\mathcal{W}_\mathcal{C}^d$. A function $f:\Omega\rightarrow\mathbb{R}^{2n}$ is called path-pseudoslice if for any $\gamma\in\mathscr{P}(\mathbb{C}^d,\Omega)$, $J\in\left[\mathcal{C}(\gamma,\Omega)\right]^k$ and $I\in\mathcal{C}(\gamma,\Omega,J)$,
	\begin{equation*}
		f\circ\gamma^I=(1,I)\zeta^+(J)(f\circ\gamma^J),
	\end{equation*}
	where $f\circ\gamma^J$ is defined by \eqref{eq-fgj}.
\end{defn}

\begin{defn}
	Let $\Omega\subset\mathcal{W}_\mathcal{C}^d$. A function $f:\Omega\rightarrow\mathbb{R}^{2n}$ is called path-solution slice, if for any $\gamma\in\mathscr{P}(\mathbb{C}^d,\Omega)$ and slice-solution $J\in\left[\mathcal{C}(\gamma,\Omega)\right]^k$ of $\mathcal{C}(\gamma,\Omega)$,
	\begin{equation}\label{eq-fcgi}
		f\circ\gamma^I=(1,I)\zeta^+(J)(f\circ\gamma^J),\qquad\forall\ I\in\mathcal{C}(\gamma,\Omega),
	\end{equation}
	where $f\circ\gamma^J$ is defined by \eqref{eq-fgj}.
\end{defn}

Let $\gamma\in\mathscr{P}(\mathbb{C}^d)$ and $t\in[0,1]$. Define the path $\gamma[t]:[0,1]\rightarrow\mathbb{C}^d$ by setting
\begin{equation*}
	\gamma[t](s)=\gamma(ts),\qquad\forall\ s\in[0,1].
\end{equation*}
It is clear that $\gamma[t]\in\mathscr{P}(\mathbb{C}^d)$.

\begin{prop}\label{pr-fsa}
	Let $\Omega\subset\mathcal{W}_\mathcal{C}^d$ and $f:\Omega\rightarrow\mathbb{R}^{2n}$. Then the following statements are equivalent.
	\begin{enumerate}[label=(\roman*)]
		\item\label{it-lo1} $f$ is path-slice
		\item\label{it-lo2} $f$ is path-pseudoslice,
		\item\label{it-lo3} $f$ is path-solution slice.
	\end{enumerate}
\end{prop}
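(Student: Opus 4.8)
The plan is to prove the three conditions equivalent through the cyclic chain $\ref{it-lo1}\Rightarrow\ref{it-lo2}\Rightarrow\ref{it-lo3}\Rightarrow\ref{it-lo1}$. Two of these implications are essentially formal, and the genuine content sits in a single step. First, for $\ref{it-lo2}\Rightarrow\ref{it-lo3}$ I would simply unwind the definitions: if $J$ is a slice-solution of $\mathcal{C}(\Omega,\gamma)$, then by definition $\mathcal{C}(\Omega,\gamma,J)=\mathcal{C}(\Omega,\gamma)$, so the path-pseudoslice identity, which holds for all $I\in\mathcal{C}(\gamma,\Omega,J)$, automatically holds for all $I\in\mathcal{C}(\gamma,\Omega)$; this is exactly condition \ref{it-lo3}.

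For $\ref{it-lo3}\Rightarrow\ref{it-lo1}$ I would invoke Proposition \ref{pr-lss} to select, for each $\gamma\in\mathscr{P}(\mathbb{C}^d,\Omega)$, a slice-solution $J_\gamma\in[\mathcal{C}(\Omega,\gamma)]^{k_\gamma}$, and then define the candidate stem function by $F(\gamma):=\zeta^+(J_\gamma)(f\circ\gamma^{J_\gamma})(1)$, which lands in $(\mathbb{R}^{2n})^{2\times 1}$ for dimensional reasons. Evaluating the path-solution slice identity \eqref{eq-fcgi} at the endpoint $t=1$ then gives $f\circ\gamma^{I}(1)=(1,I)\zeta^+(J_\gamma)(f\circ\gamma^{J_\gamma})(1)=(1,I)F(\gamma)$ for every $I\in\mathcal{C}(\gamma,\Omega)$, so $F$ is a path-slice stem function of $f$.

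The core of the argument is $\ref{it-lo1}\Rightarrow\ref{it-lo2}$, and the key device is the family of truncated paths $\gamma[t]$. Assume $f$ is path-slice with stem function $F$, and fix $\gamma\in\mathscr{P}(\mathbb{C}^d,\Omega)$, $J=(J_1,\dots,J_k)\in[\mathcal{C}(\gamma,\Omega)]^k$, $I\in\mathcal{C}(\gamma,\Omega,J)$ and $t\in[0,1]$. Since $Ran(\gamma[t]^K)\subset Ran(\gamma^K)$ for every $K$, one has $\mathcal{C}(\gamma,\Omega)\subset\mathcal{C}(\gamma[t],\Omega)$; because $\mathcal{C}_{ker}(J)$ does not depend on the path, it follows that $J_1,\dots,J_k\in\mathcal{C}(\gamma[t],\Omega)$ and $I\in\mathcal{C}(\gamma[t],\Omega,J)$, and in particular $\gamma[t]\in\mathscr{P}(\mathbb{C}^d,\Omega)$. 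Applying the path-slice identity \eqref{eq-fcg} to $\gamma[t]$ and using $\gamma[t]^{K}(1)=\gamma^{K}(t)$, I get $f\circ\gamma^{K}(t)=(1,K)F(\gamma[t])$ for every $K\in\mathcal{C}(\gamma,\Omega)$. Writing $w_t:=F(\gamma[t])$ and specializing to $K=J_1,\dots,J_k$ and to $K=I$ yields $(f\circ\gamma^{J})(t)=\zeta(J)w_t$ and $f\circ\gamma^{I}(t)=(1,I)w_t$. Hence
\[
(1,I)\zeta^+(J)(f\circ\gamma^{J})(t)=(1,I)\zeta^+(J)\zeta(J)w_t=(1,I)w_t=f\circ\gamma^{I}(t),
\]
where the middle equality is precisely \eqref{eq-ij} of Proposition \ref{pr-jjk}, valid because $I\in\mathcal{C}_{ker}(J)$. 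As $t\in[0,1]$ was arbitrary, this is the path-pseudoslice identity as an equality of whole paths.

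The one creative step — and the main obstacle — is recognizing that path-slicity is only an endpoint ($t=1$) statement, yet can be upgraded to an equality of entire paths by re-running it along each truncation $\gamma[t]$; after that, the purely algebraic cancellation $(1,I)\zeta^+(J)\zeta(J)=(1,I)$ from Proposition \ref{pr-jjk} finishes the job. The point that warrants care is that the single stem value $w_t=F(\gamma[t])$ is common to all the evaluations $K=J_1,\dots,J_k$ and $K=I$: this is exactly what makes $(f\circ\gamma^J)(t)$ factor as $\zeta(J)w_t$, and it holds because $F$ is one fixed function of the path while all these $K$ lie in $\mathcal{C}(\gamma[t],\Omega)$.
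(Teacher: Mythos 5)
Your proposal is correct and follows essentially the same route as the paper: the same cyclic chain of implications, the same truncation device $\gamma[t]$ combined with the cancellation $(1,I)\zeta^+(J)\zeta(J)=(1,I)$ from Proposition \ref{pr-jjk} for \ref{it-lo1}$\Rightarrow$\ref{it-lo2}, and Proposition \ref{pr-lss} to build the stem function for \ref{it-lo3}$\Rightarrow$\ref{it-lo1}. Your explicit evaluation at $t=1$ in the definition of $F(\gamma)$ is in fact a slight improvement in precision over the paper's phrasing.
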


\begin{proof}
	\ref{it-lo1}$\Rightarrow$\ref{it-lo2} Suppose that $f$ is path-slice. Then there is a path-slice stem function $F:\mathscr{P}(\mathbb{C}^d,\Omega)\rightarrow (\mathbb{R}^{2n})^{2\times 1}$ of $f$. Let $t\in[0,1]$, $\gamma\in\mathscr{P}(\mathbb{C}^d,\Omega)$, $J=(J_1,...,J_k)\in\left[\mathcal{C}(\gamma,\Omega)\right]^k$ and $I\in\mathcal{C}(\gamma,\Omega,J)$. It is easy to check that
	\begin{equation*}
		J\in\left[\mathcal{C}(\gamma[t],\Omega)\right]^k\qquad\mbox{and}\qquad I\in\mathcal{C}\left(\gamma[t],\Omega,J\right)\subset\mathcal{C}_{ker}(J).
	\end{equation*}
	Since $f$ is path-slice, we have
	\begin{equation*}
		f\circ\gamma^{K}(t)=f\circ\gamma[t]^{K}(1)=(1,K)F(\gamma[t]),\qquad K=I,J_1,...,J_k.
	\end{equation*}
	Hence
	\begin{equation*}
		\begin{cases}
			f\circ\gamma^I(t)=(1,I)F(\gamma[t]),
			\\f\circ\gamma^J(t)=\zeta(J)F(\gamma[t]).
		\end{cases}
	\end{equation*}
	According to $I\in\mathcal{C}_{ker}(J)$ and \eqref{eq-ij},
	\begin{equation*}
		f\circ\gamma^I(t)=(1,I)F(\gamma[t])=(1,I)\zeta^+(J)\zeta(J)F(\gamma[t])=(1,I)\zeta^+(J)f\circ\gamma^J(t).
	\end{equation*}
	It is clear that $f$ is path-pseudoslice, since the choice of $t$ is arbitrary.
	
	\ref{it-lo2}$\Rightarrow$\ref{it-lo3} Suppose that $f$ is path-pseudoslice. Let $\gamma\in\mathscr{P}(\mathbb{C}^d,\Omega)$ and $J\in\left[\mathcal{C}(\gamma,\Omega)\right]^k$ be a slice-solution of $\mathcal{C}(\gamma,\Omega)$. Then $\mathcal{C}(\gamma,\Omega)=\mathcal{C}(\gamma,\Omega,J)$. Since $f$ is path-pseudoslice, by definition it follows that for each $I\in\mathcal{C}(\gamma,\Omega)=\mathcal{C}(\gamma,\Omega,J)$,
	\begin{equation*}
		f\circ\gamma^I=(1,I)\zeta^+(J)(f\circ\gamma^J).
	\end{equation*}
	Therefore $f$ is path-solution slice.
	
	\ref{it-lo3}$\Rightarrow$\ref{it-lo1} Suppose that $f$ is path-solution slice. By Proposition \ref{pr-lss}, for each $\gamma\in\mathscr{P}(\mathbb{C}^d,\Omega)$, we can choose a slice-solution $J^{\gamma}\in\left[\mathcal{C}(\gamma,\Omega)\right]^k$ of $\mathcal{C}(\gamma,\Omega)$. Define a function $F:\mathscr{P}(\mathbb{C}^d,\Omega)\rightarrow (\mathbb{R}^{2n})^{2\times 1}$ by
	\begin{equation*}
		F(\gamma):=\zeta^+(J^\gamma)(f\circ\gamma^{(J^\gamma)}).
	\end{equation*}
	It is clear by \eqref{eq-fcgi} that $F$ is a path-slice stem function of $f$ and $f$ is path-slice.
\end{proof}

\begin{cor}\label{co-ews}
	Each weak slice regular function defined on a slice-open set in $\mathcal{W}_\mathcal{C}^d$ is path-slice and path-solution slice.
\end{cor}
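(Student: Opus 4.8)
The plan is to observe that the Path-representation Formula of Theorem \ref{th-oi} is, almost verbatim, the defining condition for a function to be path-pseudoslice, and then simply to invoke the equivalence established in Proposition \ref{pr-fsa}.

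First I would let $f:\Omega\to\mathbb{R}^{2n}$ be weak slice regular on a slice-open set $\Omega\subset\mathcal{W}_\mathcal{C}^d$. Fixing an arbitrary $\gamma\in\mathscr{P}(\mathbb{C}^d,\Omega)$, an arbitrary $J=(J_1,\dots,J_k)\in[\mathcal{C}(\gamma,\Omega)]^k$ and an arbitrary $I\in\mathcal{C}(\gamma,\Omega,J)$, the hypothesis that $\Omega$ is slice-open is exactly what licenses the application of Theorem \ref{th-oi}, which then yields
\begin{equation*}
	f\circ\gamma^I=(1,I)\zeta^+(J)(f\circ\gamma^J).
\end{equation*}
Since $\gamma$, $J$ and $I$ were arbitrary, this is precisely the condition appearing in the definition of a path-pseudoslice function. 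Hence $f$ is path-pseudoslice.

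Next I would appeal to Proposition \ref{pr-fsa}, which asserts that for any $f:\Omega\to\mathbb{R}^{2n}$ the three notions \ref{it-lo1}, \ref{it-lo2} and \ref{it-lo3} — path-slice, path-pseudoslice and path-solution slice — are equivalent. Having just shown that $f$ is path-pseudoslice, the equivalences of Proposition \ref{pr-fsa} immediately give that $f$ is both path-slice and path-solution slice, which is the assertion of the corollary.

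I do not expect any genuine obstacle here, since all the analytic content has already been discharged: the Moore-Penrose machinery of Section \ref{sc-pf} and the one-variable identity principle were used inside Theorem \ref{th-oi}, and the equivalence itself is Proposition \ref{pr-fsa}. The only point demanding a little care is the matching of quantifiers, namely noting that Theorem \ref{th-oi}, stated for a single admissible triple $(\gamma,J,I)$, holds for every such triple, which is exactly what the definition of path-pseudoslice requires; together with the observation that the slice-openness of $\Omega$ is needed to invoke that theorem.
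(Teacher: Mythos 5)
Your proposal is correct and follows exactly the paper's own route: the paper likewise deduces the corollary directly from Theorem \ref{th-oi} (which, quantified over all admissible triples $(\gamma,J,I)$, says precisely that $f$ is path-pseudoslice) combined with the equivalences of Proposition \ref{pr-fsa}. Your additional remarks on matching quantifiers and on where slice-openness is used are accurate but do not change the argument.
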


\begin{proof}
	This Corollary follows directly from Theorem \ref{th-oi} and Proposition \ref{pr-fsa}.
\end{proof}

\begin{cor}\label{co-fcg}
	(Another form of path-representation Formula) Let $\Omega\subset\mathcal{W}_\mathcal{C}^d$, $\gamma\in\mathscr{P}(\mathbb{C}^d,\Omega)$, and $J\in\left[\mathcal{C}(\gamma,\Omega)\right]^k$ be a slice-solution of $\mathcal{C}(\gamma,\Omega)$. If $f:\Omega\rightarrow\mathbb{R}^{2n}$ is weak slice regular, then for each $I\in\mathcal{C}(\gamma,\Omega)$,
	\begin{equation}\label{eq-fgg}
		f\circ\gamma^I=(1,I)\zeta^+(J)(f\circ\gamma^J),
	\end{equation}
	where $f\circ\gamma^J$ is defined by \eqref{eq-fgj}.
\end{cor}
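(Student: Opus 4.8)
The plan is to read the result off directly from the already-established Path-representation Formula (Theorem \ref{th-oi}), using nothing more than the definition of a slice-solution to line up the hypotheses. The single observation driving everything is the following: by definition, the assumption that $J\in\left[\mathcal{C}(\gamma,\Omega)\right]^k$ is a slice-solution of $\mathcal{C}(\gamma,\Omega)$ means precisely that
\begin{equation*}
	\mathcal{C}(\gamma,\Omega)=\mathcal{C}(\gamma,\Omega,J).
\end{equation*}
In particular, every $I\in\mathcal{C}(\gamma,\Omega)$ automatically lies in $\mathcal{C}(\gamma,\Omega,J)$.

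With this identification in hand, I would simply apply Theorem \ref{th-oi}. Indeed, $\Omega$ is slice-open in $\mathcal{W}_\mathcal{C}^d$, $\gamma\in\mathscr{P}(\mathbb{C}^d,\Omega)$, $J\in\left[\mathcal{C}(\gamma,\Omega)\right]^k$, and $f$ is weak slice regular; for any chosen $I\in\mathcal{C}(\gamma,\Omega)=\mathcal{C}(\gamma,\Omega,J)$ all hypotheses of Theorem \ref{th-oi} are met, so the Path-representation Formula \eqref{eq-ki} yields exactly \eqref{eq-fgg}. Since $I\in\mathcal{C}(\gamma,\Omega)$ was arbitrary, this proves the corollary.

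An equivalent, slightly more conceptual route is to invoke Corollary \ref{co-ews}, which asserts that any weak slice regular $f$ is path-solution slice; unwinding that definition produces \eqref{eq-fcgi}, which is verbatim the claimed identity \eqref{eq-fgg}. Either way, there is no genuine obstacle here: the entire analytic content has already been absorbed into Theorem \ref{th-oi} (and, upstream of it, into Lemma \ref{lm-lum} and Proposition \ref{pr-jjk}), while the notion of a slice-solution was engineered precisely so that $\mathcal{C}(\gamma,\Omega,J)$ exhausts the full index set $\mathcal{C}(\gamma,\Omega)$. The only thing to verify is this purely definitional collapse of the index sets, so the corollary is essentially a restatement of Theorem \ref{th-oi} in which the constraint $I\in\mathcal{C}(\gamma,\Omega,J)$ has been replaced by the broader $I\in\mathcal{C}(\gamma,\Omega)$ at the cost of requiring $J$ to be a slice-solution.
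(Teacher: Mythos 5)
Your proposal is correct and matches the paper's argument: the paper proves this corollary by citing Corollary \ref{co-ews} (weak slice regular functions are path-solution slice), which is exactly your second route, and your primary route of applying Theorem \ref{th-oi} directly after noting the definitional identity $\mathcal{C}(\gamma,\Omega)=\mathcal{C}(\gamma,\Omega,J)$ is just an unwound version of the same chain of implications. Nothing is missing.
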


\begin{proof}
	This corollary follows directly from Corollary \ref{co-ews}.
\end{proof}

Let $J_1,J_2\in\mathcal{C}$. Note that
\begin{equation*}
	J_1(J_1-J_2)=-(J_1-J_2)J_2\ \left(=-1-J_1J_2\right).
\end{equation*}
If $J_1-J_2$ is invertible, then
\begin{equation*}
	(J_1-J_2)^{-1}J_1=-J_2(J_1-J_2)^{-1}.
\end{equation*}
One can easily verify that
\begin{equation}\label{eq-1ji}
	\begin{pmatrix}
		1&J_1\\1&J_2
	\end{pmatrix}^{-1}
	=\begin{pmatrix}
		(J_1-J_2)^{-1}J_1&-(J_1-J_2)^{-1}J_2\\(J_1-J_2)^{-1}&-(J_1-J_2)^{-1}
	\end{pmatrix}.
\end{equation}

\begin{cor} (Classical path-representation Formula)
	Let $\Omega\subset\mathcal{W}_\mathcal{C}^d$, $f:\Omega\rightarrow\mathbb{R}^{2n}$ be a weak slice regular function, $\gamma\in\mathscr{P}(\mathbb{C}^d,\Omega)$ and $I,J_1,J_2\in\mathcal{C}(\gamma,\Omega)$ with $J_1-J_2$ being invertible. Then
	\begin{equation}\label{eq-fcip}
		f\circ\gamma^I=(1,I)
		\begin{pmatrix}
			1&J_1\\1&J_2
		\end{pmatrix}^{-1}
		\begin{pmatrix}
			f\circ\gamma^{J_1}\\f\circ\gamma^{J_2}
		\end{pmatrix}.
	\end{equation}
\end{cor}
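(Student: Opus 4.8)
The plan is to deduce this statement from Corollary~\ref{co-fcg}, specialized to the two-index tuple $J=(J_1,J_2)$, and then to identify the slice inverse $\zeta^+(J)$ with the ordinary matrix inverse.

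First I would invoke Example~\ref{ex-ij}: since $J_1,J_2\in\mathcal{C}(\gamma,\Omega)$ and $J_1-J_2$ is invertible, the pair $(J_1,J_2)$ is a slice-solution of $\mathcal{C}(\gamma,\Omega)$. Corollary~\ref{co-fcg} then applies directly and yields, for every $I\in\mathcal{C}(\gamma,\Omega)$,
\begin{equation*}
	f\circ\gamma^I=(1,I)\zeta^+(J)(f\circ\gamma^J),
\end{equation*}
with $f\circ\gamma^J$ the column vector $(f\circ\gamma^{J_1},\,f\circ\gamma^{J_2})^T$ of \eqref{eq-fgj}. Since by definition $\zeta(J)=\begin{pmatrix}1&J_1\\1&J_2\end{pmatrix}$ for $k=2$, comparing this with the target identity \eqref{eq-fcip} reduces the whole claim to the single equality $\zeta^+(J)=\zeta(J)^{-1}$.

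The remaining step, which I expect to be the only non-immediate point, is to check that the $J$-slice inverse collapses to the ordinary inverse once $\zeta(J)$ is invertible. The hypothesis that $J_1-J_2$ is invertible guarantees, via \eqref{eq-1ji}, that $\zeta(J)$ is invertible with inverse exactly the matrix appearing in \eqref{eq-fcip}. Unwinding the definition \eqref{eq-zpj}, namely $\zeta^+(J)=[\Im^{-1}_{D_J}\zeta(J)]^+\Im^{-1}_{D_J}$, I would use that $\Im^{-1}_{D_J}$ is invertible, so the product $\Im^{-1}_{D_J}\zeta(J)$ is invertible as well; Proposition~\ref{pr-ekm} \ref{it-ekm1} then identifies its Moore-Penrose inverse with its ordinary inverse, giving
\begin{equation*}
	\zeta^+(J)=\bigl(\Im^{-1}_{D_J}\zeta(J)\bigr)^{-1}\Im^{-1}_{D_J}=\zeta(J)^{-1}(\Im^{-1}_{D_J})^{-1}\Im^{-1}_{D_J}=\zeta(J)^{-1}.
\end{equation*}
Substituting this back into the formula obtained from Corollary~\ref{co-fcg} produces \eqref{eq-fcip} and completes the argument. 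The work is essentially bookkeeping: all the analytic content already resides in the path-representation formula of Theorem~\ref{th-oi} and its reformulations, and the only genuinely new observation is that, in the invertible case, the Moore-Penrose machinery degenerates to classical inversion.
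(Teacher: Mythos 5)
Your proposal is correct and follows essentially the same route as the paper's own proof: invoke Example \ref{ex-ij} to see that $(J_1,J_2)$ is a slice-solution, apply Corollary \ref{co-fcg}, and reduce everything to the identity $\zeta^+(J)=\zeta(J)^{-1}$, which you establish exactly as the paper does via the invertibility of $\Im^{-1}_{D_J}\zeta(J)$ and Proposition \ref{pr-ekm}\ref{it-ekm1}. No gaps; the only difference is the order in which the two ingredients are assembled.
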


\begin{proof}
	Set $J=(J_1,J_2)$. Since $J_1-J_2$ is invertible, it follows from \eqref{eq-1ji} that
	\begin{equation*}
		\zeta(J)=\begin{pmatrix}
			1&J_1\\1&J_2
		\end{pmatrix}
	\end{equation*}
	is invertible, so is $\Im^{-1}_{D_J}\zeta(J)$.
	Set $J=(J_1,J_2)$. By Proposition \ref{pr-ekm} \ref{it-ekm1},
	\begin{equation*}
		\left[\Im^{-1}_{D_J}\zeta(J)\right]^+=\left[\Im^{-1}_{D_J}\zeta(J)\right]^{-1}=\zeta(J)^{-1}\left(\Im^{-1}_{D_J}\right)^{-1}.
	\end{equation*}
	Hence
	\begin{equation}\label{eq-zjl}
		\zeta^+(J)=\left[\Im^{-1}_{D_J}\zeta(J)\right]^+\Im^{-1}_{D_J}=\zeta(J)^{-1}\left(\Im^{-1}_{D_J}\right)^{-1}\Im^{-1}_{D_J}=\zeta(J)^{-1}.
	\end{equation}
	On the other hand, by Example \ref{ex-ij}, $J$ is a slice-solution of $\mathcal{C}(\gamma,\Omega)$. Therefore equality \eqref{eq-fcip} follows directly from  \eqref{eq-fgg} and \eqref{eq-zjl}.
\end{proof}

\section{Weak slice regular functions over LSCS algebras}\label{sc-ws}

Despite what happens for the various types of slice regular functions treated in the literature, the weak slice regular functions we considered in the previous sections have values in a real vector space, not in an algebra, and their domain $\mathcal{W}_{\mathcal{C}}^d$ is not a subset (proper or improper) of the codomain. In order to consider functions with values in a real algebra it is necessary to require additional hypothesis on the algebra at hand. We call such algebras {\em left slice complex structure algebras}, or LSCS algebras for short.

The weak slice analysis for LSCS algebras includes the slice analysis already known and studied in the literature in the case of quaternions, Clifford algebras, octonions, but it also provides new  frameworks, for example the case of left alternative algebras and of sedenions.

\subsection{LSCS algebras}

In this subsection, we define the so-called LSCS algebras. We choose a cone $\mathcal{W}_A^d$ for the LSCS algebra $A$ and define the slice-topology on it. We state some definitions and propositions without proofs, since they are similar to those in Section \ref{sc-wsc}.

Let $A$ be a real algebra, whose binary multiplication operation is not assumed to be associative and recall that  $L_a:\ A\rightarrow A$ denotes the multiplication on the left by $a\in A$.
\begin{defn}
	A  finite-dimensional real, unital algebra $A\neq\{0\}$ is called a left slice complex structure algebra, LSCS algebra for short, if there is $a\in A$ such that $L_a$ is a complex structure on $A$.
\end{defn}

\begin{exa}
	Complex numbers, quaternions, octonions, Clifford algebras $\mathbb R_{n,m}$ ($(n,m)\neq(1,0)$) and real alternative $*$-algebras are examples of LSCS algebras. Moreover, also sedenions or other Cayley–Dickson algebras are LSCS algebras.
\end{exa}

Assume that $A$ is an LSCS algebra. Since there is a complex structure on $A$, the real dimension of $A$ is even. Hence we can set
\begin{equation*}
	n:=\frac{1}{2}\dim (A),
\end{equation*}
so that $A\cong\mathbb{R}^{2n}$ as a real vector space.

\begin{defn}
	We call
	\begin{equation*}
		\mathcal{W}_A^d:=\bigcup_{I\in\mathcal{S}_A}\mathbb{C}_I^d
	\end{equation*}
	the (left) weak-cone of $A$. Let
	\begin{equation}\label{eq-ca}
		\mathcal{S}_A:=\left\{a\in A:L_a^2=-id_A\right\}
	\end{equation}
	the so-called the set of (left) slice-units of $A$.
\end{defn}
It is immediate that
\begin{equation*}
	L(\mathcal{S}_A):=\{L_I\ :\ I\in\mathcal{S}_A\}
\end{equation*}
is a symmetric subset of $\mathfrak{C}(A)$ $\left(\cong\mathfrak{C}\left(\mathbb{R}^{2n}\right)\right)$, and
\begin{equation*}
	L[d]\left(\mathcal{W}_A^d\right)=\{(L_{q_1},...,L_{q_d}):(q_1,...,q_d)\in\mathcal{W}_A^d\}
\end{equation*}
is a weak slice-cone of
$A$ identified with $\mathbb{R}^{2n}$.

\begin{lem}
The set
	\begin{equation}\label{eq-tlm}
		\tau_s\left(\mathcal{W}_A^d\right):=\{\Omega\subset \mathcal{W}_A^d:\Omega_I\in\tau(\mathbb{C}^d_I),\ \forall\ I\in\mathcal{S}_A\}.
	\end{equation}
	is a topology on $\mathcal{W}_{\mathcal{C}_A}^d$ (called the slice-topology on $\mathcal{W}_{A}^d$).
\end{lem}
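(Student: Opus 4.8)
The plan is to deduce this lemma from Lemma \ref{lm-tlm} by identifying $\mathcal{W}_A^d$ with a weak slice-cone of the type already studied. Concretely, set $\mathcal{C}:=L(\mathcal{S}_A)\subset\mathfrak{C}(A)$, which is symmetric as noted just above the statement, and consider the map $L[d]$ sending $(q_1,\dots,q_d)\mapsto(L_{q_1},\dots,L_{q_d})$. Under the identification $A\cong\mathbb{R}^{2n}$, this carries $\mathcal{W}_A^d$ onto the weak slice-cone $\mathcal{W}_{\mathcal{C}}^d\subset[\End(\mathbb{R}^{2n})]^d$, and the definition of $\tau_s(\mathcal{W}_A^d)$ in \eqref{eq-tlm} corresponds exactly to the definition of $\tau_s(\mathcal{W}_{\mathcal{C}}^d)$ in Lemma \ref{lm-tlm}. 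Since the latter is a topology, its transport along $L[d]$ is a topology, which is the claim.

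The one point to verify is that $L[d]$ is a homeomorphism onto its image, so that the transport is legitimate. The map $L\colon A\to\End(A)$, $a\mapsto L_a$, is linear; it is injective because $A$ is unital, as $L_a(1)=a$ forces $L_a=L_b\Rightarrow a=b$. Hence $L$ is a linear isomorphism of $A$ onto $L(A)$, and $L[d]$ is a linear isomorphism of $A^d$ onto $L(A)^d$ between finite-dimensional real vector spaces, thus a homeomorphism for the Euclidean topologies. Since $L_1=\id$ and $L$ is linear, we have $L_{x+yI}=x\,\id+yL_I$, so $L[d]$ sends each slice $\mathbb{C}_I^d\subset A^d$ (with $I\in\mathcal{S}_A$) homeomorphically onto $\mathbb{C}_{L_I}^d\subset[\End(\mathbb{R}^{2n})]^d$, and it intertwines $\Omega_I=\Omega\cap\mathbb{C}_I^d$ with $(L[d]\,\Omega)_{L_I}$. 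Therefore openness in $\tau(\mathbb{C}_I^d)$ is preserved, which is exactly what is needed for the two slice-topology definitions to match.

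Alternatively, one may simply repeat the proof of Lemma \ref{lm-tlm} verbatim: that argument is purely set-theoretic, relying only on the identities $(\bigcap_\ell U_\ell)\cap\mathbb{C}_I^d=\bigcap_\ell(U_\ell)_I$ and $(\bigcup_\lambda U_\lambda)\cap\mathbb{C}_I^d=\bigcup_\lambda(U_\lambda)_I$ together with the stability of $\tau(\mathbb{C}_I^d)$ under finite intersections and arbitrary unions, none of which involves the ambient algebra structure. Thus there is no genuine obstacle here; the content is entirely that of Lemma \ref{lm-tlm}, and the only thing worth recording is the injectivity of $L$ that makes the identification with $\mathcal{W}_{\mathcal{C}}^d$ rigorous.
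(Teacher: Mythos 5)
Your proposal is correct and matches the paper's treatment: the paper simply states that the lemma "can be proved following the proof of Lemma \ref{lm-tlm}", i.e.\ the purely set-theoretic argument you describe in your last paragraph, after having noted (as you do) that $L(\mathcal{S}_A)$ is a symmetric subset of $\mathfrak{C}(A)$ and that $L[d]\left(\mathcal{W}_A^d\right)$ is a weak slice-cone of $A\cong\mathbb{R}^{2n}$. Your extra verification that $L$ is injective (via $L_a(1)=a$) is a reasonable precision that the paper leaves implicit.
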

The above lemma can be proved following the proof of Lemma \ref{lm-tlm}.
In reality, since $A$ has a real vector space structure with even dimension, all the statements in Section \ref{sc-wsc} can be repeated  for $\mathcal{W}_A^d$ with the same proofs.

\subsection{Weak slice regular functions over LSCS algebras}

In this subsection, we define weak slice regular functions on slice-open sets in $\mathcal{W}_A^d$, i.e. sets belonging to $\tau_s\left(\mathcal{W}_{A}^d\right)$, and with values in an LSCS algebra.
Properties in Section \ref{sc-wsr}, \ref{sc-pf} and \ref{sc-prf} also hold for the weak slice regular functions over LSCS algebras,
including a splitting lemma, an identity principle and a representation formula.
   We state some lemmas and theorems without proofs, since they are similar to the corresponding ones in Section \ref{sc-wsr}, \ref{sc-pf} and \ref{sc-prf}.

\begin{defn}
	Let $\Omega\in\tau_s(\mathcal{W}_{A}^d)$. A function $f:\Omega\rightarrow A$ is called weak slice regular if and only if for each $I\in\mathcal{S}_A$, $f_I=f|_{\Omega_I}$ is real differentiable and for any $\ell=1,2,...,d$,
	\begin{equation}\label{eq-wff}
		\frac{1}{2}\left(\frac{\partial}{\partial x_\ell}+I\frac{\partial}{\partial y_\ell}\right)f_I(x+yI)=0,\qquad\mbox{on}\qquad\Omega_I.
	\end{equation}
\end{defn}

It is evident that the monomial
	\begin{equation*}
		(x+y I)\shortmid\!\longrightarrow (x+yI)^{\alpha} a=(x_1+y_1I)^{\alpha_1}\cdots (x_d+y_dI)^{\alpha_d} a,
	\end{equation*}
	is a weak slice regular function, where $x,y\in\mathbb{R}^d$, $a\in A$, $\alpha=(\alpha_1,\ldots, \alpha_d)\in\mathbb{N}^d$ and $I\in\mathcal{S}_A$.

Let $\Omega\in\tau_s(\mathcal{W}_A^d)$. Note that for each weak slice regular function $f:\Omega\rightarrow A$, equality \eqref{eq-wff} holds for each $I\in\mathcal{S}_A$. It implies that
\begin{equation*}
	\begin{split}
		&\frac{1}{2}(\frac{\partial}{\partial x_\ell}+L_I\frac{\partial}{\partial y_\ell})f\circ L[d]^{-1}|_{L[d](\Omega)}
		(x+yL_I)
		\\=&\frac{1}{2}(\frac{\partial}{\partial x_\ell}+I\frac{\partial}{\partial y_\ell})f_I(x+yI)=0,
	\end{split}
\end{equation*}
for any $L_I\in L(\mathcal{S}_A)$, $x+yL_I\in L(\Omega)$ and $\ell=1,2,...,d$. It is clear that
\begin{equation*}
	f\circ
	L[d]^{-1}|_{L[d](\Omega)}:L[d](\Omega)
	\longrightarrow A\cong\mathbb{R}^{2n}
\end{equation*}
is a $\mathbb{R}^{2n}$-valued weak slice regular function over the weak slice-cone $\mathcal{W}_{L(\mathcal{S}_A)}^d$ of $\mathbb{R}^{2n}$, where $L[d](\Omega)\in\tau_s\left(\mathcal{W}_{L(\mathcal{S}_A)}^d\right)$.

Similarly, if $f\circ L[d]^{-1}|_{L[d](\Omega)}$ is a weak slice regular function, $f$ is also a weak slice regular function. In summary, the following proposition holds.

\begin{prop}\label{pr-wlo1}
	Let $\Omega\in\tau_s\left(\mathcal{W}_A^d\right)$. A function $f:\Omega\rightarrow A$ is weak slice regular if and only if $f\circ L[d]^{-1}|_{L[d](\Omega)}$ is weak slice regular.
\end{prop}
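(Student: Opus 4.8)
The plan is to show that the linear isomorphism $L[d]$ transports the entire slice structure and all the Cauchy--Riemann conditions from the LSCS side to the $\mathbb{R}^{2n}$ side term by term, so that the two regularity requirements become literally the same system of equations; the ``if and only if'' then falls out at once, without having to argue two separate implications.

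First I would record the elementary but essential fact that $L\colon A\to\End(A)$ is $\mathbb{R}$-linear and injective: linearity is immediate, and injectivity holds because $A$ is unital, so $L_a(1)=a$ recovers $a$ from $L_a$. Consequently $L[d]$ is a linear isomorphism of $A^d$ onto its image, its restriction to each slice $\mathbb{C}_I^d$ is a linear homeomorphism onto $\mathbb{C}_{L_I}^d$, and $L[d]$ maps $\mathcal{W}_A^d$ bijectively onto $\mathcal{W}_{L(\mathcal{S}_A)}^d$. Since $\bigl(L[d](\Omega)\bigr)_{L_I}=L[d](\Omega_I)$ and every complex structure in $\mathcal{C}=L(\mathcal{S}_A)$ equals $L_I$ for a unique $I\in\mathcal{S}_A$, this already identifies the two slice-topologies (so $\Omega\in\tau_s(\mathcal{W}_A^d)$ iff $L[d](\Omega)\in\tau_s(\mathcal{W}_{L(\mathcal{S}_A)}^d)$) and makes the quantifier ``for all $I\in\mathcal{S}_A$'' correspond exactly to ``for all $\mathcal{I}\in\mathcal{C}$''.

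The core step is the pointwise comparison on a single slice. Writing $g:=f\circ L[d]^{-1}|_{L[d](\Omega)}$ and $\mathcal{I}:=L_I$, I would observe that $g_{\mathcal{I}}(x+y\mathcal{I})=f_I(x+yI)$ as $A\cong\mathbb{R}^{2n}$-valued functions of the real parameters $(x,y)$; hence $g_{\mathcal{I}}$ is real differentiable iff $f_I$ is, and their partial derivatives in $x_\ell,y_\ell$ coincide. The only remaining point is that the two Cauchy--Riemann operators agree, i.e.\ that the complex structure $\mathcal{I}=L_I\in\End(\mathbb{R}^{2n})$ acting on the derivative vector $\tfrac{\partial}{\partial y_\ell}g_{\mathcal{I}}$ coincides with the algebra left-multiplication $I\,(\cdot)$ of the LSCS definition. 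This is exactly the defining property of $L_I$: for every $w\in A$ one has $\mathcal{I}(w)=L_I(w)=I\,w$. Therefore
\begin{equation*}
	\tfrac12\Bigl(\tfrac{\partial}{\partial x_\ell}+\mathcal{I}\tfrac{\partial}{\partial y_\ell}\Bigr)g_{\mathcal{I}}(x+y\mathcal{I})
	=\tfrac12\Bigl(\tfrac{\partial}{\partial x_\ell}+I\tfrac{\partial}{\partial y_\ell}\Bigr)f_I(x+yI),
\end{equation*}
which is precisely the identity already displayed before the statement.

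Finally I would assemble these observations: $f$ is weak slice regular iff the right-hand side above vanishes for every $I\in\mathcal{S}_A$ and every $\ell$, iff the left-hand side vanishes for every $\mathcal{I}\in\mathcal{C}=L(\mathcal{S}_A)$ and every $\ell$, iff $g$ is weak slice regular. Because the comparison is an equality rather than a one-sided estimate, both implications are obtained simultaneously. I do not expect a genuine obstacle here; the one point deserving care is the identification of the abstract endomorphism $L_I$ with multiplication by $I$ in $A$ under $A\cong\mathbb{R}^{2n}$, together with the bookkeeping that $L[d]$ is a slice-topological homeomorphism, so that the domains and the index sets $\mathcal{S}_A$ and $\mathcal{C}$ correspond.
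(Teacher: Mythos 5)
Your proposal is correct and follows essentially the same route as the paper: the displayed identity equating $\tfrac12(\partial_{x_\ell}+L_I\partial_{y_\ell})(f\circ L[d]^{-1})$ with $\tfrac12(\partial_{x_\ell}+I\partial_{y_\ell})f_I$ is exactly the computation the paper gives before the statement, and the converse is obtained by the same identification. Your explicit remarks on the injectivity of $L$ (via $L_a(1)=a$) and the matching of the index sets $\mathcal{S}_A$ and $L(\mathcal{S}_A)$ are details the paper leaves implicit, but they do not change the argument.
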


Proposition \ref{pr-wlo1} implies that various properties of weak slice regular functions over weak slice-cones of $\mathbb{R}^{2n}$ also hold for weak slice regular functions over $A$, among which the Splitting Lemma and the Identity Principle that we do not repeat here. We only adapt the notations and the statement of the Path-representation Formula.
%
%

Let $\Omega\subset\mathcal{W}_{A}^d$ and $\gamma\in\mathscr{P}\left(\mathbb{C}^d\right)$. Define
\begin{equation*}
	\mathscr{P}\left(\mathbb{C}^d,\Omega\right):=\left\{\delta\in\mathscr{P}\left(\mathbb{C}^d\right):\exists\ I\in\mathcal{S}_A,\mbox{ s.t. }Ran(\delta^{I})\subset\Omega\right\}
\end{equation*}
and
\begin{equation*}
	\mathcal{S}_A(\gamma,\Omega):=\left\{I\in\mathcal{S}_A:Ran(\gamma^{I})\subset\Omega\right\}.
\end{equation*}

Let $J=(J_1,...,J_k)\in\mathcal{S}_A^k$. Define
\begin{equation*}
	\mathcal{S}_A^{ker}(J):=\left\{I\in\mathcal{S}_A:\ker(1,L_I)\supset\bigcap_{\ell=1}^k\ker(1,L_J)\right\}.
\end{equation*}
Let $\Omega\subset\mathcal{W}_\mathcal{C}^d$ and $\gamma\in\mathscr{P}(\mathbb{C}^d,\Omega)$. Set
\begin{equation*}
	\mathcal{S}_A(\Omega,\gamma,J):=\mathcal{S}_A(\Omega,\gamma)\cap\mathcal{S}_A^{ker}(J).
\end{equation*}

\begin{thm}
	(Path-representation Formula) Let $\Omega$ be a slice-open set in $\mathcal{W}_{A}^d$, $\gamma\in\mathscr{P}(\mathbb{C}^d,\Omega)$, $J=(J_1,J_2,...,J_k)\in\left[\mathcal{S}_A(\gamma,\Omega)\right]^k$ and $I\in\mathcal{S}_A(\gamma,\Omega,J)$. If $f:\Omega\rightarrow A$ is weak slice regular then
	\begin{equation*}
		f\circ\gamma^I=(1,L_I)\zeta^+(L_J)(f\circ\gamma^J),
	\end{equation*}
	where $f\circ\gamma^J$ is defined by \eqref{eq-fgj}.
\end{thm}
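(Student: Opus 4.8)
The plan is to reduce the statement entirely to its already-proved $\mathbb{R}^{2n}$-valued counterpart, Theorem \ref{th-oi}, by transporting $f$ through the left-regular representation $L[d]$ and invoking the equivalence established in Proposition \ref{pr-wlo1}. Concretely, I would set
\[
g:=f\circ L[d]^{-1}\big|_{L[d](\Omega)}:\ L[d](\Omega)\longrightarrow A\cong\mathbb{R}^{2n}.
\]
Since $f$ is weak slice regular, Proposition \ref{pr-wlo1} guarantees that $g$ is an $\mathbb{R}^{2n}$-valued weak slice regular function on the slice-open set $L[d](\Omega)$ of the weak slice-cone $\mathcal{W}_{L(\mathcal{S}_A)}^d$, where $\mathcal{C}:=L(\mathcal{S}_A)\subset\mathfrak{C}_n$.

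First I would set up the dictionary between the two settings. The isomorphism $L[d]$ intertwines the slice $\mathbb{C}_I^d$ in $\mathcal{W}_A^d$ with the slice $\mathbb{C}_{L_I}^d$ in $\mathcal{W}_{\mathcal{C}}^d$, and in particular carries the path $\gamma^I=\Psi_i^I\circ\gamma$ to $\gamma^{L_I}=\Psi_i^{L_I}\circ\gamma$, i.e.\ $L[d]\circ\gamma^I=\gamma^{L_I}$ for every $I\in\mathcal{S}_A$. Consequently the index sets match under $L$: one checks directly from the definitions that $L\big(\mathcal{S}_A(\gamma,\Omega)\big)=\mathcal{C}(\gamma,L[d](\Omega))$, that $\mathcal{S}_A^{ker}(J)$ corresponds to $\mathcal{C}_{ker}(L_J)$ with $L_J:=(L_{J_1},\dots,L_{J_k})$, and hence that $L\big(\mathcal{S}_A(\Omega,\gamma,J)\big)=\mathcal{C}(L[d](\Omega),\gamma,L_J)$. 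In particular the hypotheses $J\in[\mathcal{S}_A(\gamma,\Omega)]^k$ and $I\in\mathcal{S}_A(\gamma,\Omega,J)$ translate into $L_J\in[\mathcal{C}(\gamma,L[d](\Omega))]^k$ and $L_I\in\mathcal{C}(\gamma,L[d](\Omega),L_J)$, which are exactly the hypotheses of Theorem \ref{th-oi} applied to $g$. Note also that the matrix $\zeta^+(L_J)$ appearing in the claim is literally the $L_J$-slice inverse built from the operators $L_{J_\ell}\in\End(\mathbb{R}^{2n})$, so it is the same object on both sides.

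With the dictionary in place, Theorem \ref{th-oi} yields
\[
g\circ\gamma^{L_I}=(1,L_I)\,\zeta^+(L_J)\,(g\circ\gamma^{L_J}),
\]
where $g\circ\gamma^{L_J}$ is the column formed as in \eqref{eq-fgj}. Finally I would transport this identity back to $f$. Since $g=f\circ L[d]^{-1}$ and $L[d]\circ\gamma^I=\gamma^{L_I}$, we have $g\circ\gamma^{L_I}=f\circ\gamma^I$ and likewise $g\circ\gamma^{L_{J_\ell}}=f\circ\gamma^{J_\ell}$ for each $\ell$, so that $g\circ\gamma^{L_J}=f\circ\gamma^J$ as $A\cong\mathbb{R}^{2n}$-valued columns. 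Substituting these equalities gives exactly $f\circ\gamma^I=(1,L_I)\,\zeta^+(L_J)\,(f\circ\gamma^J)$, as desired.

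The only genuine work — and the step I expect to be the main obstacle — is the bookkeeping in the second paragraph: verifying that $L[d]$ really does intertwine the paths and that all three index sets $\mathcal{S}_A(\gamma,\Omega)$, $\mathcal{S}_A^{ker}(J)$ and $\mathcal{S}_A(\Omega,\gamma,J)$ correspond, under the bijection $I\leftrightarrow L_I$, to their $\mathcal{C}$-analogues. Once this correspondence is confirmed, the result is a formal transfer of Theorem \ref{th-oi}, and no new analytic input (splitting, the Moore--Penrose identities of Proposition \ref{pr-jjk}, or the identity principle) is required beyond what was already used there.
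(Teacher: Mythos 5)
Your proposal is correct and is precisely the argument the paper intends: the theorem is stated without proof because it is the formal transfer, via Proposition \ref{pr-wlo1} and the identification $I\leftrightarrow L_I$, of Theorem \ref{th-oi} to the LSCS setting. The bookkeeping you flag (that $L[d]\circ\gamma^I=\gamma^{L_I}$ and that $\mathcal{S}_A(\gamma,\Omega)$, $\mathcal{S}_A^{ker}(J)$, $\mathcal{S}_A(\Omega,\gamma,J)$ correspond under $L$ to $\mathcal{C}(\gamma,L[d](\Omega))$, $\mathcal{C}_{ker}(L_J)$, $\mathcal{C}(L[d](\Omega),\gamma,L_J)$ with $\mathcal{C}=L(\mathcal{S}_A)$) is immediate from the definitions, since $L$ is injective on the unital algebra $A$ and the kernel conditions are already phrased in terms of $L_I$.
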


\begin{cor}\label{correprform}
	Let $\Omega\subset\mathcal{W}_{A}^d$, $f:\Omega\rightarrow A$ be a weak slice regular function, $\gamma\in\mathscr{P}(\mathbb{C}^d,\Omega)$ and $I,J_1,J_2\in\mathcal{S}_A(\gamma,\Omega)$ with $L_{J_1}-L_{J_2}$ being invertible. Then
	\begin{equation}\label{eq-fee}
		f\circ\gamma^I=(1,L_I)
		\begin{pmatrix}
			1&L_{J_1}\\1&L_{J_2}
		\end{pmatrix}^{-1}
		\begin{pmatrix}
			f\circ\gamma^{J_1}\\f\circ\gamma^{J_2}
		\end{pmatrix},
	\end{equation}
	where
	\begin{equation}\label{eq-ljj}
		\begin{pmatrix}
			1&L_{J_1}\\1&L_{J_2}
		\end{pmatrix}^{-1}
		=\begin{pmatrix}
			(L_{J_1}-L_{J_2})^{-1}L_{J_1}&-(L_{J_1}-L_{J_2})^{-1}L_{J_2}\\(L_{J_1}-L_{J_2})^{-1}&-(L_{J_1}-L_{J_2})^{-1}
		\end{pmatrix}.
	\end{equation}
\end{cor}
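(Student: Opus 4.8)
The plan is to reduce the statement to the preceding Path-representation Formula for LSCS algebras, following the same line as the deduction of the $\mathbb{R}^{2n}$-valued Classical path-representation Formula from Theorem \ref{th-oi}. Set $J=(J_1,J_2)$, so that $L_J=(L_{J_1},L_{J_2})$ and
\begin{equation*}
	\zeta(L_J)=\begin{pmatrix} 1 & L_{J_1}\\ 1 & L_{J_2}\end{pmatrix}.
\end{equation*}
Since $J_1,J_2\in\mathcal{S}_A$, by \eqref{eq-ca} we have $L_{J_1}^2=L_{J_2}^2=-id_A$, so the algebraic identity \eqref{eq-1ji}, whose derivation only uses $L_{J_\imath}^2=-1$ together with the invertibility of $L_{J_1}-L_{J_2}$, applies verbatim to the operators $L_{J_1},L_{J_2}\in\End(A)$. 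This gives that $\zeta(L_J)$ is invertible with inverse given by \eqref{eq-ljj}.

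Next I would show that the $J$-slice inverse collapses to the ordinary inverse, mirroring \eqref{eq-zjl}. Because $\zeta(L_J)$ and $\Im^{-1}_{D_{L_J}}$ are both invertible, their product is invertible, so Proposition \ref{pr-ekm}\ref{it-ekm1} yields $[\Im^{-1}_{D_{L_J}}\zeta(L_J)]^+=[\Im^{-1}_{D_{L_J}}\zeta(L_J)]^{-1}$; substituting into the definition \eqref{eq-zpj} of $\zeta^+$ gives
\begin{equation*}
	\zeta^+(L_J)=\zeta(L_J)^{-1}(\Im^{-1}_{D_{L_J}})^{-1}\Im^{-1}_{D_{L_J}}=\zeta(L_J)^{-1}.
\end{equation*}
The remaining point is to upgrade the Path-representation Formula, which a priori applies only for $I\in\mathcal{S}_A(\gamma,\Omega,J)$, to every $I\in\mathcal{S}_A(\gamma,\Omega)$. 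For this I would invoke the LSCS analogue of Example \ref{ex-ij}: since $L_{J_1}-L_{J_2}$ is invertible, $\ker\zeta(L_J)=\ker(1,L_{J_1})\cap\ker(1,L_{J_2})=\{0\}$, so $\ker(1,L_I)\supset\{0\}$ holds trivially for every $I$, i.e. $\mathcal{S}_A(\gamma,\Omega)\subset\mathcal{S}_A^{ker}(J)$. Hence $\mathcal{S}_A(\gamma,\Omega,J)=\mathcal{S}_A(\gamma,\Omega)$ and $J$ is a slice-solution. Applying the Path-representation Formula for each such $I$ and inserting $\zeta^+(L_J)=\zeta(L_J)^{-1}$ with the explicit form \eqref{eq-ljj} produces exactly \eqref{eq-fee}.

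I do not expect a genuine obstacle here: every step parallels the $\mathbb{R}^{2n}$-valued proof with $J_\imath$ replaced by $L_{J_\imath}$ and $I$ by $L_I$. The only subtlety worth flagging is that the manipulations in \eqref{eq-1ji} and in Proposition \ref{pr-ekm} are matrix identities carried out in $\End(A)\cong\End(\mathbb{R}^{2n})$, which is always associative, so they remain valid even though $A$ itself may be non-associative; working with the left-multiplication operators $L_{J_\imath}$ rather than with the algebra elements $J_\imath$ is precisely what makes this legitimate.
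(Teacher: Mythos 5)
Your argument is correct and follows essentially the same route as the paper: the paper proves this corollary in the $\mathbb{R}^{2n}$-valued setting (invertibility of $\zeta(J)$ via \eqref{eq-1ji}, collapse of $\zeta^+(J)$ to $\zeta(J)^{-1}$ via Proposition \ref{pr-ekm}, and the trivial-kernel observation of Example \ref{ex-ij} to make $J$ a slice-solution), and then transfers it to LSCS algebras through the left-multiplication operators exactly as you do. Your remark that associativity of $\End(A)$ is what legitimizes the matrix manipulations for a possibly non-associative $A$ is precisely the point the paper relies on via Proposition \ref{pr-wlo1}.
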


The above corollary is the classical path-representation formula. Now we write this formula in a classical form. By \eqref{eq-fee} and \eqref{eq-ljj}, we have
\begin{equation}\label{reprfor}
	\begin{split}
		f\circ\gamma^I
		=(I-J_2)\left[(J_1-J_2)^{-1}f\circ\gamma^{J_1}\right]
		-(I-J_1)\left[(J_1-J_2)^{-1}f\circ\gamma^{J_2}\right],
	\end{split}
\end{equation}
which is in the form of the representation formula for slice function over real alternative $*$-algebra in \cite[Proposition 6]{Ghiloni2011001}.

\section{The case of left alternative algebras}\label{sc-laa}
In \cite{Ghiloni2011001}, Ghiloni and Perotti define a notion of slice regular functions in one variable for functions with values in a real alternative $*$-algebra and using stem functions. This class does not coincide with the one considered in this work and in fact in this section we show that the class of weak slice regular functions on a finite-dimensional real left alternative algebra $A$ is a generalization of the class of slice regular functions defined in \cite{Ghiloni2011001}.

We recall that a real algebra $A$ is called left alternative, if for any $a,b\in A$,
\begin{equation*}
	a(ab)=(aa)b.
\end{equation*}
The interested reader may consult  \cite{Albert1949001} in which the author introduces the notion of right alternative algebra that can be easily adapted to our case.

\begin{ass}\label{as-wa}
	Assume that $A\neq\{0\}$ is a finite-dimensional real unital left alternative algebra with $\mathcal{S}_A\neq\varnothing$.
\end{ass}

We extend the quadratic cone in \cite{Ghiloni2011001} to a real unital left alternative algebra $A$ by
\begin{equation*}
	Q_A:=\bigcup_{I\in\mathbb{S}_A}\mathbb{C}_I,
\end{equation*}
where $\mathbb{S}_A$ is the set of imaginary units, defined by
\begin{equation*}
	\mathbb{S}_A:=\{a\in A:a^2=-1\}.
\end{equation*}

Since for every $I\in\mathbb{S}_A$,
\begin{equation*}
	(L_I)^2(a)=I(Ia)=(II)a=-a,
\end{equation*}
it follows that $L_I$ is a complex structure on $A$. Hence
\begin{equation}\label{eq-cam}
	\mathbb{S}_A=\mathcal{S}_A,\qquad\mbox{and}\qquad Q_A=\mathcal{W}_A^1.
\end{equation}

\subsection{Real alternative $*$-algebras, case $d=1$}

The results in Section \ref{sc-ws} hold in the particular case of real alternative $*$-algebras with unit and finite dimensional (this is the case considered in \cite{Ghiloni2011001}) and  as an example we shall consider the case $d=1$.

By \eqref{eq-tlm} and \eqref{eq-cam}, the slice-topology on $Q_A(=\mathcal{W}_A^1)$ is
	\begin{equation*}
		\tau_s(Q_A):=\{\Omega\subset Q_A:\Omega_I\in\tau(\mathbb{C}_I),\ \forall\ I\in\mathbb{S}_A\}.
	\end{equation*}

\begin{defn}\label{sl-due}
	Let $\Omega\in\tau_s(Q_A)$. A function $f:\Omega\rightarrow A$ is called weak slice regular if and only if for each $I\in\mathbb{S}_A$, $f_I:=f|_{\Omega_I}$ is real differentiable and
	\begin{equation*}
		\frac{1}{2}\left(\frac{\partial}{\partial x}+I\frac{\partial}{\partial y}\right)f_I(x+yI)=0,\qquad\mbox{on}\qquad\Omega_I.
	\end{equation*}
\end{defn}

\begin{thm}
	(Identity Principle) Let $\Omega$ be a domain in $\tau_s(Q_{A})$ and $f,g:\Omega\rightarrow A$ be weak slice regular. If $f=g$ on a non-empty open subset of $\Omega_I$ (or $\Omega_\mathbb{R})$ for some $I\in\mathbb{S}_A$, then $f=g$ on $\Omega$.
\end{thm}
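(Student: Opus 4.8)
The plan is to reduce the statement to the Identity Principle already proved in the vector-space setting, namely Theorem \ref{tm-ip}, by transporting everything along the left-regular representation $L$ and invoking Proposition \ref{pr-wlo1}. Recall from \eqref{eq-cam} that here $\mathbb{S}_A=\mathcal{S}_A$ and $Q_A=\mathcal{W}_A^1$, and that for each $I\in\mathbb{S}_A$ the operator $L_I$ is a complex structure on $A\cong\mathbb{R}^{2n}$, so that $A$ is an LSCS algebra. First I would record that the map $L\colon A\to\End(A)$, $a\mapsto L_a$, is real-linear by distributivity and $\mathbb{R}$-bilinearity of the product, and injective because $A$ is unital (if $L_a=0$ then $a=L_a(1)=0$); hence $L$ is a linear isomorphism onto $L(A)$ carrying each slice $\mathbb{C}_I=\mathbb{R}+\mathbb{R}I$ with $I\in\mathbb{S}_A$ onto $\mathbb{C}_{L_I}=\mathbb{R}\,\id+\mathbb{R}L_I$. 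In the notation of Proposition \ref{pr-wlo1} (with $d=1$) this means $L[1]$ restricts to a bijection of $Q_A$ onto the weak slice-cone $\mathcal{W}_{L(\mathcal{S}_A)}^1$ of $\mathbb{R}^{2n}$, where $L(\mathcal{S}_A)$ is a non-empty symmetric subset of $\mathfrak{C}\left(\mathbb{R}^{2n}\right)$, so that Assumption \ref{as-cp} and hence Theorem \ref{tm-ip} apply to $\mathcal{C}=L(\mathcal{S}_A)$.

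Next I would check that $L[1]$ is a homeomorphism for the slice-topologies. On each slice it is the restriction of a linear isomorphism $\mathbb{C}_I\to\mathbb{C}_{L_I}$, hence a homeomorphism for the Euclidean topologies; since by definition a subset is slice-open precisely when its trace on every slice is Euclidean-open, both $L[1]$ and $L[1]^{-1}$ send slice-open sets to slice-open sets. In particular $L[1](\Omega)$ is a slice-domain in $\mathcal{W}_{L(\mathcal{S}_A)}^1$, and $L[1]$ maps $\Omega_I$ onto $\big(L[1](\Omega)\big)_{L_I}$ and $\Omega_\mathbb{R}$ onto $\big(L[1](\Omega)\big)_\mathbb{R}$, so that the two alternatives in the hypothesis are transported correctly.

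Finally, setting $\tilde f:=f\circ L[1]^{-1}$ and $\tilde g:=g\circ L[1]^{-1}$ on $L[1](\Omega)$, Proposition \ref{pr-wlo1} guarantees that $\tilde f$ and $\tilde g$ are $\mathbb{R}^{2n}$-valued weak slice regular functions on the slice-domain $L[1](\Omega)$. The hypothesis that $f=g$ on a non-empty open subset $D$ of $\Omega_I$ (resp.\ of $\Omega_\mathbb{R}$) becomes $\tilde f=\tilde g$ on the non-empty open subset $L[1](D)$ of $\big(L[1](\Omega)\big)_{L_I}$ (resp.\ of $\big(L[1](\Omega)\big)_\mathbb{R}$), whence Theorem \ref{tm-ip}\ref{it-ip2} (resp.\ Theorem \ref{tm-ip}\ref{it-ip1}) yields $\tilde f=\tilde g$ on all of $L[1](\Omega)$; composing with $L[1]$ gives $f=g$ on $\Omega$. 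There is no substantial obstacle here, since the analytic core has already been handled in the vector-space case; the only point demanding care is the verification that $L[1]$ is a slice-homeomorphism matching $\mathbb{S}_A$-slices with $L(\mathcal{S}_A)$-slices and the real axis with the real axis, which is exactly what makes both forms of the hypothesis pass to $\tilde f$ and $\tilde g$.
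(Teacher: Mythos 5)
Your proof is correct and follows essentially the same route the paper intends: the paper states this theorem without proof precisely because, as noted after Proposition \ref{pr-wlo1}, the Identity Principle for $A$-valued functions is obtained by transporting everything along $L[d]$ to the vector-space setting and invoking Theorem \ref{tm-ip}, which is exactly the reduction you carry out. Your explicit verification that $L[1]$ is a slice-homeomorphism matching slices with slices and $\mathbb{R}$ with $\mathbb{R}\,\id$ is the detail the paper leaves implicit.
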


Let $U$ be an open set in $\mathbb{C}$ and
\begin{equation*}
	\Omega_U:=\{x+yI\in Q_A:x+yi\in U,\ I\in\mathbb{S}_A\}.
\end{equation*}
\begin{rmk}
For any open subset $U$ in $\mathbb C$ the set $\Omega_U$ is slice-open in $Q_A$. Suppose that $U$ is symmetric with respect to the real axis and that $f:\Omega_U\rightarrow A$ is a slice regular function according to \cite[Definition 8]{Ghiloni2011001}. By \cite[Proposition 8]{Ghiloni2011001}, we have that $f_I$ is holomorphic for all $I\in\mathbb{S}_A$. Thus these slice regular functions are weak slice regular.

On the other hand, all slice regular functions according to \cite[Definition 8]{Ghiloni2011001} are slice functions. However, not all the weak slice regular functions are of slice type. Hence weak slice regular functions form a class  larger than the one defined in \cite{Ghiloni2011001}.
\end{rmk}
An important result that we obtained in Section 6 and that holds in this case is:
\begin{prop}
	(Classical Path-representation Formula) Let $\Omega\subset Q_A$, $f:\Omega\rightarrow A$ be a weak slice regular function, $\gamma\in\mathscr{P}(\mathbb{C}^1,\Omega)$ and $I,J_1,J_2\in \mathbb{S}_A(\gamma,\Omega)$ with $L_{J_1}-L_{J_2}$ being invertible. Then
	\begin{equation*}
		f\circ\gamma^I=(1,L_I)
		\begin{pmatrix}
			1&L_{J_1}\\1&L_{J_2}
		\end{pmatrix}^{-1}
		\begin{pmatrix}
			f\circ\gamma^{J_1}\\f\circ\gamma^{J_2}
		\end{pmatrix},
	\end{equation*}
	where $\begin{pmatrix}
		1&L_{J_1}\\1&L_{J_2}
	\end{pmatrix}^{-1}$ satisfies \eqref{eq-ljj} and $\mathbb{S}_A(\gamma,\Omega):=\{I\in\mathbb{S}_A:Ran(\gamma^I)\subset\Omega\}$.
\end{prop}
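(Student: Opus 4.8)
The plan is to recognize this proposition as a direct specialization of Corollary \ref{correprform} to the case $d=1$ and to the subclass of real alternative $*$-algebras. First I would verify that the hypotheses place us within the scope of that corollary. A real alternative $*$-algebra is in particular alternative, hence left alternative, so Assumption \ref{as-wa} is satisfied; moreover the mere existence of $I,J_1,J_2\in\mathbb{S}_A(\gamma,\Omega)$ forces $\mathbb{S}_A\neq\varnothing$. Any such $A$ is an LSCS algebra because, for each $I\in\mathbb{S}_A$, the computation $(L_I)^2(a)=I(Ia)=(II)a=-a$ shows that $L_I$ is a complex structure on $A$, exactly as recorded in the passage preceding \eqref{eq-cam}.

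The second step is to match the notation so that the ambient objects coincide with those of the LSCS theory. By the identities \eqref{eq-cam}, namely $\mathbb{S}_A=\mathcal{S}_A$ and $Q_A=\mathcal{W}_A^1$, the domain $\Omega\subset Q_A$ is precisely a subset of $\mathcal{W}_A^1$, the path family $\mathscr{P}(\mathbb{C}^1,\Omega)$ is the one appearing in the general setting, and the index set $\mathbb{S}_A(\gamma,\Omega)=\{I\in\mathbb{S}_A:Ran(\gamma^I)\subset\Omega\}$ coincides with $\mathcal{S}_A(\gamma,\Omega)$. I would also note that Definition \ref{sl-due} of weak slice regularity for $d=1$ is exactly the $d=1$ instance of the definition used over $\mathcal{W}_A^d$, so that $f$ is weak slice regular in the sense required by Corollary \ref{correprform}. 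In other words, under the identification $A\cong\mathbb{R}^{2n}$, $q\mapsto L_q$, every datum named in the proposition is the image of the corresponding datum over the weak slice-cone.

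With these identifications in hand, the third and final step is simply to invoke Corollary \ref{correprform} with $d=1$ for the given $I,J_1,J_2$: the invertibility hypothesis on $L_{J_1}-L_{J_2}$ carries over verbatim, and the conclusion \eqref{eq-fee}, together with the explicit inverse \eqref{eq-ljj}, is exactly the asserted formula. I do not expect a genuine obstacle, since the statement is a corollary of an already-established result; the only point demanding care is confirming that the abstract LSCS framework of Section \ref{sc-ws} genuinely subsumes the real alternative $*$-algebra case, and once \eqref{eq-cam} is granted this subsumption is automatic.
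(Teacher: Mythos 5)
Your proposal is correct and follows the same route as the paper: the paper itself presents this proposition as an instance of the Section~6 results for LSCS algebras (Corollary \ref{correprform} with $d=1$), justified by the identifications $\mathbb{S}_A=\mathcal{S}_A$ and $Q_A=\mathcal{W}_A^1$ from \eqref{eq-cam}. Your additional checks (left alternativity giving $(L_I)^2=-\mathrm{id}$, hence the LSCS property, and the matching of $\mathbb{S}_A(\gamma,\Omega)$ with $\mathcal{S}_A(\gamma,\Omega)$) are exactly the points the paper relies on implicitly.
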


\subsection{Clifford algebra $\mathbb{R}_{m}$ case for $d=1$}

Slice regular functions with values in the real Clifford algebra $\mathbb{R}_{0,m}=\mathbb R_m$ are called slice monogenic functions and were introduced in \cite{Colombo2009002}.

Let $\mathbb{R}_m$ be the real Clifford algebra over $m$ units $\mathbf{e}_1,...,\mathbf{e}_m$ such that $\mathbf{e}_\ell \mathbf{e}_\jmath+\mathbf{e}_\jmath \mathbf{e}_\ell=-2\delta_{\ell,\jmath}$, where $\delta_{\ell\jmath}$ is the Kronecker symbol.

In \cite{Colombo2009002}, the slice monogenic functions are defined on the real vector space
\begin{equation*}
	\mathbf{R}^{m+1}:=\bigcup_{I\in\mathbb{S}_{\mathbb{R}_m}^*}\mathbb{C}_I,
\end{equation*}
in $\mathbb{R}_m$, where
\begin{equation*}
	\mathbb{S}_{\mathbb{R}_m}^*:=\{x_1\mathbf{e}_1+\cdots x_m \mathbf{e}_m:x_1^2+\cdots+x_m^2=1\}.
\end{equation*}
The cone $\mathbf{R}^{m+1}$ is a $m+1$-dimensional real vector space, i.e.
\begin{equation*}
	\mathbf{R}^{m+1}=\mathbb{R}+\mathbf{e}_1\mathbb{R}+...+\mathbf{e}_m\mathbb{R}\cong\mathbb{R}^{m+1}
\end{equation*}
which can be identified with the set of paravectors in $\mathbb R_m$.
In \cite{Ghiloni2011001}, slice regular functions are generalized to a larger set, namely the quadratic cone of $\mathbb{R}_m$
\begin{equation*}
	Q_{\mathbb{R}_m}:=\bigcup_{I\in\mathbb{S}_{\mathbb{R}_m}}\mathbb{C}_I,
\end{equation*}
where
\begin{equation*}
	\mathbb{S}_{\mathbb{R}_m}:=\{I\in\mathbb{R}_m:I^2=-1\}.
\end{equation*}
With the approach in this paper we can consider this second case and, by \eqref{eq-tlm}, we can introduce  the slice-topology on $Q_{\mathbb{R}_m}=\mathcal{W}_{\mathbb{R}_m}$:
\begin{equation*}
	\tau_s(Q_{\mathbb{R}_m})=\{\Omega\subset Q_{\mathbb{R}_m}:\Omega_I\in\tau(\mathbb{C}_I),\ \forall\ I\in\mathbb{S}_{\mathbb{R}_m}\}.
\end{equation*}
Weak slice regularity according to Definition \ref{sl-due} can be considered for functions
$f:\Omega\rightarrow {\mathbb{R}_m}$, where $\Omega\in\tau_s(Q_{\mathbb{R}_m})$.
All the results proved in sections 4-7 hold in this case and in particular Corollary \ref{correprform} which leads to the classical representation formula \ref{reprfor}.

\section{The case of sedenions}\label{sc-sc}

In this section we consider the algebra of sedenions.
The sedenions $\mathfrak{S}$ are obtained in \cite{Muses1980001} by applying the Cayley–Dickson construction \cite{Dickson1919001} to the octonions. The case of functions with values in this algebra is completely new in slice analysis since sedenions are not an alternative algebra.

Elements in the algebra of sedenions $\mathfrak{S}$ are of the form
\begin{equation*}
	s=\sum_{m=0}^{15}x_m e_m,
\end{equation*}
where $x_m$ are reals, $e_0=1$ and $e_1,e_2,...,e_{15}$ are imaginary units (i.e. their square equals $-1$) and satisfy the multiplication table in the Appendix.

The set of slice-units $\mathcal{S}_\mathfrak{S}$ in $\mathfrak{S}$ is defined as we already did in other cases by
\begin{equation*}
	\mathcal{S}_\mathfrak{S}:=\left\{s\in\mathfrak{S}:L_s^2=-id_{\mathfrak{S}}\right\}.
\end{equation*}
Let us denote by $\mathbb O$ the algebra of octonions which is isomorphic to the algebra generated by $e_1,\ldots, e_7$, i.e.
	\begin{equation*}
		\mathbb{O}:=\mathbb{R}\langle1,e_1,...,e_7\rangle .
	\end{equation*}
The set
	\begin{equation*}
		\mathbb{S}_{\mathfrak{S}}:=\{s\in\mathfrak{S}:s^2=-1\}.
	\end{equation*}
	contains all the imaginary units in $\mathfrak{S}$.
	
By direct calculations, one may show that
	\begin{equation*}
		\mathcal{S}_{\mathfrak{S}}=\left\{p+qe_8\in\mathbb{S}_{\mathfrak{S}}:p,q\in\mathbb{O},\ pq=qp\right\},
	\end{equation*}
	
It is clear that $\mathfrak{S}$ is an LSCS algebra. The weak slice regular functions can be defined on slice-open sets in
\begin{equation*}
	\mathcal{W}_\mathfrak{S}:=\bigcup_{I\in\mathcal{S}_\mathfrak{S}}\mathbb{C}_I.
\end{equation*}

By \eqref{eq-tlm} and \eqref{eq-cam}, the slice-topology on $\mathcal{W}_\mathfrak{S}(=\mathcal{W}_\mathfrak{S}^1)$ is
\begin{equation*}
	\tau_s(\mathcal{W}_\mathfrak{S}):=\{\Omega\subset \mathcal{W}_\mathfrak{S}\ :\Omega_I\in\tau(\mathbb{C}_I),\ \forall\ I\in\mathcal{S}_\mathfrak{S}\}.
\end{equation*}
The definition of weak slice regular function can be repeated as in the previous sections:
\begin{defn}
	Let $\Omega\in\tau_s(\mathcal{W}_\mathfrak{S})$. A function $f:\Omega\rightarrow \mathfrak{S}$ is called weak slice regular if and only if for each $I\in\mathcal{C}_\mathfrak{S}$, $f_I:=f|_{\Omega_I}$ is real differentiable and
	\begin{equation*}
		\frac{1}{2}\left(\frac{\partial}{\partial x}+I\frac{\partial}{\partial y}\right)f_I(x+yI)=0,\qquad\mbox{on}\qquad\Omega_I.
	\end{equation*}
\end{defn}
All the relevant results hold in the case of sedenions and, in particular, the following proposition deserves a comment:
\begin{prop}
  	(Classical Path-representation Formula) Let $\Omega\subset\mathcal{W}_\mathfrak{S}$, $f:\Omega\rightarrow\mathfrak{S}$ be a weak slice regular function, $\gamma\in\mathscr{P}(\mathbb{C}^1,\Omega)$ and $I,J_1,J_2\in\mathcal{S}_\mathfrak{S}(\gamma,\Omega)$ with $L_{J_1}-L_{J_2}$ being invertible. Then
	\begin{equation*}
		f\circ\gamma^I=(1,L_I)
		\begin{pmatrix}
			1&L_{J_1}\\1&L_{J_2}
		\end{pmatrix}^{-1}
		\begin{pmatrix}
			f\circ\gamma^{J_1}\\f\circ\gamma^{J_2}
		\end{pmatrix},
	\end{equation*}
	where $\begin{pmatrix}
		1&L_{J_1}\\1&L_{J_2}
	\end{pmatrix}^{-1}$ satisfies \eqref{eq-ljj}.
\end{prop}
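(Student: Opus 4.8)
The plan is to recognize this proposition as a direct specialization of the general Classical Path-representation Formula, Corollary \ref{correprform}, to the algebra $A=\mathfrak{S}$ with $d=1$. The essential observation, already recorded in this section, is that $\mathfrak{S}$ is an LSCS algebra: since $\mathcal{S}_\mathfrak{S}\neq\varnothing$, there exists a slice-unit $a\in\mathfrak{S}$ with $L_a^2=-id_\mathfrak{S}$, so $L_a$ is a complex structure on $\mathfrak{S}\cong\mathbb{R}^{16}$. Consequently the entire apparatus of Section \ref{sc-ws} applies verbatim, and there is no need to re-prove anything from scratch.

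First I would verify that all hypotheses of Corollary \ref{correprform} are met under the identification $\mathcal{S}_A=\mathcal{S}_\mathfrak{S}$. We are given $\Omega\subset\mathcal{W}_\mathfrak{S}=\mathcal{W}_\mathfrak{S}^1$, a weak slice regular function $f:\Omega\rightarrow\mathfrak{S}$, a path $\gamma\in\mathscr{P}(\mathbb{C}^1,\Omega)$, and slice-units $I,J_1,J_2\in\mathcal{S}_\mathfrak{S}(\gamma,\Omega)$ with $L_{J_1}-L_{J_2}$ invertible in $\End(\mathbb{R}^{16})$. These coincide exactly with the hypotheses of the general corollary once $A=\mathfrak{S}$ is substituted. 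Thus \eqref{eq-fee} applies directly and yields the asserted identity, while the explicit form of the $2\times 2$ inverse is the one recorded in \eqref{eq-ljj}.

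The only point that warrants attention is that sedenions are neither associative nor even alternative, whereas the classical representation formula of \cite{Ghiloni2011001} is established in the alternative setting. The reason the argument still goes through is that the proof of the representation formula in Sections \ref{sc-pf}--\ref{sc-prf} never invokes associativity or alternativity of a codomain algebra: via Proposition \ref{pr-wlo1} everything reduces to the $\mathbb{R}^{2n}$-valued theory of Theorem \ref{th-oi}, whose proof depends only on the real vector space structure together with the left-multiplication complex structures $L_{J_\ell}\in\End(\mathbb{R}^{2n})$ and the Moore-Penrose machinery of Section \ref{sc-pdb}. For this reason I do not anticipate any genuine obstacle; the substantive work has been front-loaded into the general LSCS framework, and the present proposition is obtained simply by reading off that framework in the case $A=\mathfrak{S}$.
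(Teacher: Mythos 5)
Your proposal is correct and matches the paper's own (implicit) argument: the paper states this proposition as a direct instance of Corollary \ref{correprform} for the LSCS algebra $A=\mathfrak{S}$ with $d=1$, which is exactly the specialization you carry out. Your closing remark that the general machinery never uses associativity or alternativity of the codomain is precisely the point the paper emphasizes when introducing sedenions as a genuinely new setting.
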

In \eqref{eq-ljj} we need to compute $L_{J_1}-L_{J_2}$ which does not always exist. For example, let $J_1=e_1\in\mathcal{S}_\mathfrak{S}$ and $J_2=-e_{10}\in\mathcal{S}_\mathfrak{S}$. Since
\begin{equation*}
	(e_1+e_{10})(e_5+e_{14})=0,
\end{equation*}
it is clear that $L_{J_1}-L_{J_2}=L_{e_1+e_{10}}$ is not invertible. Then we can write the following path-representation formula in which we use  the Moore-Penrose inverse.

\begin{thm}
	(Path-representation Formula) Let $\Omega$ be a slice-open set in $\mathcal{W}_{\mathfrak{S}}$, $\gamma\in\mathscr{P}(\mathbb{C}^d,\Omega)$, $J=(J_1,J_2,...,J_k)\in\left[\mathcal{S}_\mathfrak{S}(\gamma,\Omega)\right]^k$ and $I\in\mathcal{S}_\mathfrak{S}(\gamma,\Omega,J)$. If $f:\Omega\rightarrow \mathfrak{S}$ is weak slice regular, then
	\begin{equation*}
		f\circ\gamma^I=(1,L_I)\zeta^+(L_J)(f\circ\gamma^J),
	\end{equation*}
	where $f\circ\gamma^J$ is defined by \eqref{eq-fgj}.
\end{thm}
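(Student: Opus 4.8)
The plan is to read this statement as the specialization to $A=\mathfrak{S}$ of the Path-representation Formula for weak slice regular functions over an LSCS algebra stated in Section \ref{sc-ws}; the only thing to supply is the reduction underlying that general statement, carried out for the sedenions. We have already recorded that $\mathfrak{S}$ is an LSCS algebra, with $n=\tfrac{1}{2}\dim_{\mathbb{R}}\mathfrak{S}=8$, so that $\mathfrak{S}\cong\mathbb{R}^{16}$ as a real vector space and $L(\mathcal{S}_\mathfrak{S})$ is a symmetric subset $\mathcal{C}$ of $\mathfrak{C}_8$. Under this identification $\mathcal{W}_\mathfrak{S}$ is the weak slice-cone $\mathcal{W}_\mathcal{C}^d$, each slice-unit $K\in\mathcal{S}_\mathfrak{S}$ gives the complex structure $L_K\in\mathcal{C}$, and the hypotheses transcribe as $L_J=(L_{J_1},\dots,L_{J_k})\in[\mathcal{C}(\gamma,L[d](\Omega))]^k$ and $L_I\in\mathcal{C}(\gamma,L[d](\Omega),L_J)$; indeed the condition defining $\mathcal{S}_\mathfrak{S}^{ker}(J)$, namely $\ker(1,L_I)\supset\bigcap_{\ell=1}^{k}\ker(1,L_{J_\ell})$, is exactly the condition defining $\mathcal{C}_{ker}(L_J)$.

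With these identifications in place, first I would invoke Proposition \ref{pr-wlo1} to replace $f$ by the $\mathbb{R}^{16}$-valued function $\widetilde{f}:=f\circ L[d]^{-1}|_{L[d](\Omega)}$, which is weak slice regular over $\mathcal{W}_\mathcal{C}^d$. Applying Theorem \ref{th-oi} to $\widetilde{f}$, with the complex structures $L_I$ and $L_J$ in the roles of $I$ and $J$, gives
\begin{equation*}
	\widetilde{f}\circ\gamma^{L_I}=(1,L_I)\,\zeta^+(L_J)\,(\widetilde{f}\circ\gamma^{L_J}).
\end{equation*}
To descend back to $f$ I would use that $L[d]$ carries $\gamma^{K}=\Psi_i^{K}\circ\gamma$ to $\gamma^{L_K}=\Psi_i^{L_K}\circ\gamma$, i.e. $L[d]\circ\gamma^{K}=\gamma^{L_K}$ for $K=I,J_1,\dots,J_k$, because $L[d](x+yK)=x\,\id+yL_K$ componentwise. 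Consequently $\widetilde{f}\circ\gamma^{L_K}=f\circ L[d]^{-1}\circ\gamma^{L_K}=f\circ\gamma^{K}$, and substituting this, together with the definition \eqref{eq-fgj} of $f\circ\gamma^J$, into the displayed identity yields exactly the asserted formula.

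The analytic substance is carried entirely by Theorem \ref{th-oi}; what remains is bookkeeping, and the one place that genuinely deserves care is the mutual compatibility of the three identifications at work: the linear isomorphism $\mathfrak{S}\cong\mathbb{R}^{16}$, the left-regular representation $K\mapsto L_K$ sending $\mathcal{S}_\mathfrak{S}$ into $\mathfrak{C}_8$, and the resulting bijection between $\mathscr{P}(\mathbb{C}^d,\Omega)$ and $\mathscr{P}(\mathbb{C}^d,L[d](\Omega))$. One must check that these intertwine the action of the operator-matrix $(1,L_I)\zeta^+(L_J)$ on the columns $f\circ\gamma^J$ consistently, so that the $\mathfrak{S}$-valued left-hand side and the $\mathbb{R}^{16}$-valued right-hand side agree after identification; this is immediate once one notes that $\zeta^+(L_J)$ is defined intrinsically from $L_J$ through \eqref{eq-zpj}, hence is literally the same object in both pictures. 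Finally, the failure of $L_{J_1}-L_{J_2}$ to be invertible exhibited just before the statement (for $J_1=e_1$, $J_2=-e_{10}$) is precisely the phenomenon that rules out the classical inverse and forces the Moore-Penrose inverse $\zeta^+$; accordingly no step may assume that $\zeta(L_J)$ is invertible, which is exactly why the reduction must be routed through Theorem \ref{th-oi} rather than through the classical formula \eqref{eq-fee}.
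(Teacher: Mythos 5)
Your proposal is correct and matches the paper's intended argument: the paper states this theorem without proof precisely because it is the specialization to $A=\mathfrak{S}$ of the LSCS Path-representation Formula, which in turn reduces to Theorem \ref{th-oi} via Proposition \ref{pr-wlo1} and the identification $q\mapsto L_q$. Your explicit verification that $L[d]\circ\gamma^{K}=\gamma^{L_K}$ and that $\zeta^+(L_J)$ is the same object on both sides supplies exactly the bookkeeping the paper leaves implicit.
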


\bibliographystyle{amsplain}
\bibliography{mybibfile}
\newpage

\section*{Appendix: Sedenion multiplication table}

\begin{table}[h]
	\resizebox{\textwidth}{3.5cm}{
		\begin{tabular}{|c"c c c c|c c c c|c c c c|c c c c|}
			\thickhline$\cdot$&$1$&$e_1$&$e_2$&$e_3$&$e_4$&$e_5$&$e_6$&$e_7$&$e_8$&$e_9$&$e_{10}$&$e_{11}$&$e_{12}$&$e_{13}$&$e_{14}$&$e_{15}$\\
			\thickhline$1$&$1$&$e_1$&$e_2$&$e_3$&$e_4$&$e_5$&$e_6$&$e_7$&$e_8$&$e_9$&$e_{10}$&$e_{11}$&$e_{12}$&$e_{13}$&$e_{14}$&$e_{15}$\\
			$e_1$&$e_1$&$-1$&$e_3$&$-e_2$&$e_5$&$-e_4$&$-e_7$&$e_6$&$e_9$&$-e_8$&$-e_{11}$&$e_{10}$&$-e_{13}$&$e_{12}$&$e_{15}$&$-e_{14}$\\
			$e_2$&$e_2$&$-e_3$&$-1$&$e_1$&$e_6$&$e_7$&$-e_4$&$-e_5$&$e_{10}$&$e_{11}$&$-e_8$&$-e_9$&$-e_{14}$&$-e_{15}$&$e_{12}$&$e_{13}$\\
			$e_3$&$e_3$&$e_2$&$-e_1$&$-1$&$e_7$&$-e_6$&$e_5$&$-e_4$&$e_{11}$&$-e_{10}$&$e_9$&$-e_8$&$-e_{15}$&$e_{14}$&$-e_{13}$&$e_{12}$\\
			\hline$e_4$&$e_4$&$-e_5$&$-e_6$&$-e_7$&$-1$&$e_1$&$e_2$&$e_3$&$e_{12}$&$e_{13}$&$e_{14}$&$e_{15}$&$-e_8$&$-e_9$&$-e_{10}$&$-e_{11}$\\
			$e_5$&$e_5$&$e_4$&$-e_7$&$e_6$&$-e_1$&$-1$&$-e_3$&$e_2$&$e_{13}$&$-e_{12}$&$e_{15}$&$-e_{14}$&$e_9$&$-e_8$&$e_{11}$&$-e_{10}$\\
			$e_6$&$e_6$&$e_7$&$e_4$&$-e_5$&$-e_2$&$e_3$&$-1$&$-e_1$&$e_{14}$&$-e_{15}$&$-e_{12}$&$e_{13}$&$e_{10}$&$-e_{11}$&$-e_8$&$e_9$\\
			$e_7$&$e_7$&$-e_6$&$e_5$&$e_4$&$-e_3$&$-e_2$&$e_1$&$-1$&$e_{15}$&$e_{14}$&$-e_{13}$&$-e_{12}$&$e_{11}$&$e_{10}$&$-e_9$&$-e_8$\\
			\hline$e_8$&$e_8$&$-e_9$&$-e_{10}$&$-e_{11}$&$-e_{12}$&$-e_{13}$&$-e_{14}$&$-e_{15}$&$-1$&$e_1$&$e_2$&$e_3$&$e_4$&$e_5$&$e_6$&$e_7$\\
			$e_9$&$e_9$&$e_8$&$-e_{11}$&$e_{10}$&$-e_{13}$&$e_{12}$&$e_{15}$&$-e_{14}$&$-e_1$&$-1$&$-e_3$&$e_2$&$-e_5$&$e_4$&$e_7$&$-e_6$\\
			$e_{10}$&$e_{10}$&$e_{11}$&$e_8$&$-e_9$&$-e_{14}$&$-e_{15}$&$e_{12}$&$e_{13}$&$-e_2$&$e_3$&$-1$&$-e_1$&$-e_6$&$-e_7$&$e_4$&$e_5$\\
			$e_{11}$&$e_{11}$&$-e_{10}$&$e_9$&$e_8$&$-e_{15}$&$e_{14}$&$-e_{13}$&$e_{12}$&$-e_3$&$-e_2$&$e_1$&$-1$&$-e_7$&$e_6$&$-e_5$&$e_4$\\
			\hline$e_{12}$&$e_{12}$&$e_{13}$&$e_{14}$&$e_{15}$&$e_8$&$-e_9$&$-e_{10}$&$-e_{11}$&$-e_4$&$e_5$&$e_6$&$e_7$&$-1$&$-e_1$&$-e_2$&$-e_3$\\
			$e_{13}$&$e_{13}$&$-e_{12}$&$e_{15}$&$-e_{14}$&$e_9$&$e_8$&$e_{11}$&$-e_{10}$&$-e_5$&$-e_4$&$e_7$&$-e_6$&$e_1$&$-1$&$e_3$&$-e_2$\\
			$e_{14}$&$e_{14}$&$-e_{15}$&$-e_{12}$&$e_{13}$&$e_{10}$&$-e_{11}$&$e_8$&$e_9$&$-e_6$&$-e_7$&$-e_4$&$e_5$&$e_2$&$-e_3$&$-1$&$e_1$\\
			$e_{15}$&$e_{15}$&$e_{14}$&$-e_{13}$&$-e_{12}$&$e_{11}$&$e_{10}$&$-e_9$&$e_8$&$-e_7$&$e_6$&$-e_5$&$-e_4$&$e_3$&$e_2$&$-e_1$&$-1$\\\thickhline
	\end{tabular}}
\end{table}

\end{document}